\numberwithin{equation}{section}
\def\C{{\mathbb{C}}}
\def\oe{\overline{\varep}}
\def\ze{\zeta}
\def\varep{\varepsilon}
\def\al{\alpha}
\def\div{{\,\mathrm{div}\,}}
\def\curl{{\,\mbox{curl}\,}}
\def\wt{\widetilde}
\def\bar{\overline}
\def\R{{\mathbb R}}
\def\e{{\varepsilon}}
\def\KK{{\mathcal K}}
\def\KK{{\mathcal K}}
\def\R{{\bf R}}
\def\Z{{\mathbb Z}}
\def\X{{\mathbb X}}
\def\bar{\overline}
\def\what{\widehat}
\def\R{\mathbb{R}}
\def\T{{\mathbb T}}
\newtheorem{theorem}{Theorem}[section]
\newtheorem{lemma}[theorem]{Lemma}
\newtheorem{proposition}[theorem]{Proposition}
\newtheorem{definition}[theorem]{Definition}
\newtheorem{remark}[theorem]{Remark}
\begin{document}

\title[Periodic water waves in 3D]{Long-time existence for multi-dimensional \\ periodic water waves}
\markboth{A. Ionescu and F. Pusateri}{Periodic water waves}


\author{A. D. Ionescu}
\address{Princeton University}
\email{aionescu@math.princeton.edu}

\author{F. Pusateri}
\address{University of Toronto}
\email{fabiop@math.toronto.edu}

\thanks{A. D. Ionescu was supported in part by NSF grant DMS-1600028 and by NSF-FRG grant DMS-1463753.
F. Pusateri was supported in part by Start-up grants from Princeton University and the University of Toronto, and 
NSERC grant RGPIN-0648}

\begin{abstract}
We prove an extended lifespan result for the full gravity-capillary water waves system with a $2$ dimensional periodic interface:
for initial data of sufficiently small size $\e$, smooth solutions exist up to times of the order of $\e^{-5/3+}$,
for almost all values of the gravity and surface tension parameters.

Besides the quasilinear nature of the equations, the main difficulty 
is to handle the weak small divisors bounds for quadratic and cubic interactions, growing with the size of the largest frequency.
To overcome this difficulty we use (1) the (Hamiltonian)
structure of the equations which gives additional smoothing close to the resonant hypersurfaces, 
(2) another structural property, connected to time-reversibility, that allows us to handle ``trivial'' cubic resonances, 
(3) sharp small divisors lower bounds on three and four-waves modulation functions based on counting arguments,
and (4) partial normal form transformations and symmetrization arguments in the Fourier space.
Our theorem appears to be the first extended lifespan result for quasilinear 
equations with non-trivial resonances on a multi-dimensional torus.

\end{abstract}

\maketitle

\setcounter{tocdepth}{1}

\tableofcontents

\section{Introduction}

\subsection{Free boundary Euler equations and water waves} The evolution of an inviscid perfect fluid that occupies a domain $\Omega_t \subset \R^n$, for $n \geq 2$, at time $t$,
is described by the free boundary incompressible Euler equations.
If $v$ and $p$ denote the velocity and the pressure of the fluid (with constant density equal to $1$)
at time $t$ and position $x \in \Omega_t$, these equations are
\begin{equation}\label{E}
(\partial_t + v \cdot \nabla) v = - \nabla p - g e_n,\qquad \nabla \cdot v = 0,\qquad x \in \Omega_t,
\end{equation}
where $g$ is the gravitational constant. The first equation in \eqref{E} is the conservation of momentum equation and  the second equation is the incompressibility condition.
The free surface $S_t := \partial \Omega_t$ moves with the normal component of the velocity according to the kinematic boun\-dary condition
 \begin{equation}\label{BC1}
\partial_t + v \cdot \nabla  \,\, \mbox{is tangent to} \,\, {\bigcup}_t S_t \subset \R^{n+1}_{x,t}.
\end{equation}
The pressure on the interface is given by
\begin{equation}
\label{BC2}
p (x,t) = \sigma \kappa(x,t),  \qquad x \in S_t,
\end{equation}
where $\kappa$ is the mean-curvature of $S_t$ and $\sigma \geq 0$ is the surface tension coefficient.
At liquid-air interfaces, the surface tension force results from the greater attraction
of water molecules to each other than to the molecules in the air.

One can consider the free boundary Euler equations \eqref{E}-\eqref{BC2} in various types of domains $\Omega_t$ (bounded, periodic, unbounded)
and study flows with different characteristics (with or without vorticity, with gravity and/or surface tension),
or even more complicated scenarios where the moving interface separates two fluids.
In this paper we consider the case of irrotational flows, i.e. $\rm{\curl} v = 0$, with infinite bottom, in the periodic setting. 

In this presentation we consider the equations in both the Euclidean and periodic settings. 
In the case of irrotational flows one can reduce \eqref{E}-\eqref{BC2} to a system on the boundary.
Indeed, assume that $\X=\T$ (periodic setting) or $\X=\R$ (Euclidean setting) and $\Omega_t \subset \X^{n-1}\times \R$ is the region below the graph of a function $h : \X^{n-1}_x \times I_t \rightarrow \R$,
\begin{align*}
\Omega_t = \{ (x,y) \in \X^{n-1} \times \R \, : y \leq h(x,t) \} \quad \mbox{and} \quad S_t = \{ (x,y) : y = h(x,t) \}.
\end{align*}
Let $\Phi$ denote the velocity potential, $\nabla_{x,y} \Phi(x,y,t) = v (x,y,t)$ for $(x,y) \in \Omega_t$, which vanishes as $y\to -\infty$.
If $\phi(x,t) := \Phi (x, h(x,t),t)$ is the restriction of $\Phi$ to the boundary $S_t$,
the equations of motion reduce to the following system for the unknowns $h, \phi : \X^{n-1}\times I_t \rightarrow \R$:
\begin{equation}
\label{WWE}
\left\{
\begin{array}{l}
\partial_t h = G(h) \phi,
\\
\partial_t \phi = -g h  + \sigma \div \Big[ \dfrac{\nabla h}{ (1+|\nabla h|^2)^{1/2} } \Big]
  - \dfrac{1}{2} {|\nabla \phi|}^2 + \dfrac{{\left( G(h)\phi + \nabla h \cdot \nabla \phi \right)}^2}{2(1+{|\nabla h|}^2)}.
\end{array}
\right.
\end{equation}
Here
\begin{equation}
\label{defG0}
G(h) := \sqrt{1+{|\nabla h|}^2} \mathcal{N}(h),
\end{equation}
and $\mathcal{N}(h)$ is the Dirichlet-Neumann map associated to the domain $\Omega_t$.
Roughly speaking, one can think of $G(h)$ as a first order, non-local, linear operator that depends nonlinearly on the domain.
We refer to  \cite[chap. 11]{SulemBook} or the book of Lannes \cite{LannesBook} for the derivation of \eqref{WWE}.
For sufficiently small smooth solutions, this system admits the conserved energy
\begin{equation}\label{CPWHam}
\begin{split}
\mathcal{H}(h,\phi) &:= \frac{1}{2} \int_{\X^{n-1}} G(h)\phi \cdot \phi \, dx + \frac{g}{2} \int_{\X^{n-1}} h^2 \,dx
  + \sigma\int_{\X^{n-1}} \frac{{|\nabla h|}^2}{1 + \sqrt{1+|\nabla h|^2} } \, dx
  \\
  &\approx {\big\| |\nabla |^{1/2} \phi \big\|}_{L^2}^2 + {\big\| (g-\sigma\Delta)^{1/2}h \big\|}_{L^2}^2,
\end{split}
\end{equation}
which is the sum of the kinetic energy corresponding to the $L^2$ norm of the velocity field and the potential energy due to gravity and surface tension.
It was first observed by Zakharov \cite{Zak0} that \eqref{WWE} is the Hamiltonian flow associated to \eqref{CPWHam}.

The formal linearization of \eqref{WWE}-\eqref{defG0} around a flat and still interface is
\begin{align}
\label{WWElin}
\partial_t h = |\nabla| \phi, \qquad \partial_t \phi = -g h  + \sigma \Delta h.
\end{align}
By defining the linear dispersion relation
\begin{align}
\label{disprel}
\Lambda_{g,\sigma} := \sqrt{g|\nabla|+\sigma|\nabla|^3},
\end{align}
the identitites \eqref{WWElin} can be written as a single equation for a complex-valued unknown,
\begin{align}
\label{WWElin2}
\partial_t u + i \Lambda_{g,\sigma}u = 0, \qquad  u := \sqrt{g+\sigma|\nabla|^2}h + i|\nabla|^{1/2}\phi.
\end{align}

\subsection{Local regularity} 
Due to the complicated nature of the equations, the development of a basic local well-posedness theory for water waves
(existence and uniqueness of smooth solutions for the Cauchy problem) has proved to be highly non-trivial.
Early results include those by Nalimov \cite{Nalimov},
Yosihara \cite{Yosi}, and Craig \cite{CraigLim}, which deal with the case of small perturbations of a flat interface.
It was first proved by Wu \cite{Wu1,Wu2} that local-in-time solutions can be constructed with initial data of arbitrary size in Sobolev spaces, in the irrotational case. 
Following this breakthrough, the question of local well-posedness of the water waves
and free boundary Euler equations has been addressed by many authors.
See, for example, \cite{BG,CL,Lindblad,Ambrose,Lannes,CS2,ShZ3,CHS,ABZ1,ABZ2} 
for local regularity results in various physical situations, which may include vorticity,
surface tension, non-trivial finite bottom, two-fluid systems, or low regularity. 
See also the review paper \cite[Section 2]{IoPuRev} for a longer discussion on local regularity. 

Due to these contributions, the local well-posedness theory of water wave systems is presently well-understood in a variety of different scenarios.
In short, one can say that for sufficiently nice initial data one can construct classical smooth solutions
on a small time interval that depends on the size of the initial data and the arc-chord constant of the initial interface.
In particular, for small data of size $\e$ solutions exist and stay regular for times of $O(\e^{-1})$.

\subsection{Global regularity in Euclidean spaces}\label{sssecEuclidean}
In the Euclidean case $\X=\R$, it is sometimes possible to construct  global-in-time solutions to the Cauchy problem for \eqref{WWE}.
The main mecha\-nism is dispersion, which, combined with localization (decay at a spatial infinity) 
transfers the decay of linear solutions to the nonlinear problem, and gives control for long times.

For $n=3$ (2$d$ interfaces), the first global regularity results were proved  for the gravity problem ($g>0$, $\sigma=0$) by Ger\-main-Masmoudi-Shatah \cite{GMS2} and Wu \cite{Wu3DWW}. Global regularity in $3$d was also proved for the capillary problem ($g=0$, $\sigma>0$) in \cite{GMSC}.
For the case of a finite flat bottom see the works of Wang \cite{Wa2,Wa3}.
More recently, the more difficult question of global regularity for the full gravity-capillary problem ($g>0$, $\sigma>0$) 
was solved by Deng-Ionescu-Pausader-Pusateri \cite{DIPP}. 

In two dimensions (1$d$ interfaces) the first long-time result for \eqref{WWE} 
is due to Wu \cite{WuAG}, where almost-global existence for the gravity problem was obtained. 
This was improved to global regularity by the authors \cite{IoPu2} and Alazard-Delort \cite{ADa,ADb}. See also the refinements by  Hunter and Ifrim-Tataru \cite{HIT,IT} and Wang \cite{Wa1}.
For the capillary problem global regularity was proved independently by the authors \cite{IoPu3,IoPu4} and by Ifrim-Tataru \cite{IT2}. 

We emphasize that all the global regularity results for water waves proved so far require 3 basic assumptions: small data (small perturbations of the rest solution), trivial vorticity inside the fluid, and flat Euclidean geometry. See also the review paper \cite{IoPuRev} for a longer discussion of the main ideas involved in proving global regularity.

\subsection{Long-time regularity on tori}\label{sssecTori}

In the periodic case $\X = \T$, there are no dispersive effects that can lead to decay to control solutions for long times.
In addition, the quasilinear nature of the equations (and the lack of higher order conserved quantities)
prevent the effective use of semilinear techniques to construct global solutions. 
As a result, there are no global regularity results for water waves in periodic settings.

A partial substitute for global regularity is to prove {\it extended lifespan} results, 
which means to show that solutions can be extended smoothly beyond the time of existence predicted by the local theory.\footnote{A 
different mechanism to produce long-term or global solutions in the (one-dimensional) periodic setting is to prove 
the existence of large families of time-quasiperiodic solutions, see the recent papers \cite{BerMon,BBHM}.} 
In the case of small data of size $\varep$, this means extending solutions for times longer than $O(\varep^{-1})$. 
The main tool to prove such results is normal form transformations \cite{shatahKGE}.

\subsubsection*{Quartic energy inequalities}
Let us consider a generic equation of the form
\begin{align}
\label{tori1}
\partial_t u + i \Lambda u = Q(u,\overline{u}),
\end{align}
where $u$ is a solution (defined either on a torus or on a Euclidean space), $\Lambda = \Lambda(\nabla)$ 
is a suitable dispersion relation defined by a real-valued Fourier multiplier $\Lambda$, 
and $Q$ is a suitable quadratic (semilinear or quasi-linear) nonlinearity that may depend on $u$, $\overline{u}$, and their derivatives.
In certain cases one can start with energy estimates and then integrate by parts in time 
(the method of normal forms) to prove a {\it{quartic energy inequality}} of the form 
\begin{equation}\label{quen1}
\big|\mathcal{E}_N(t)-\mathcal{E}_N(0)\big|\lesssim \int_0^t\mathcal{E}_N(s)\cdot\|u(s)\|^2_{H^{N/2}}\,ds,
\end{equation}
for a suitable functional $\mathcal{E}_N(t)$ satisfying $\mathcal{E}_N(t)\approx \|u(t)\|_{H^N}^2$. 
The point is to get two factors of $\|u(s)\|_{H^{N/2}}$ in the right-hand side,
which leads to control of the energy increment over times of length $O(\e^{-2})$, 
and simultaneously avoid loss of derivatives. 

One expects to be able to prove such a quartic energy  inequality 
if the denominators produced by the normal forms do not vanish, in the quantitative form
\begin{align}
\label{torinores2}
\Big| \frac{q_{\pm\pm}(-\xi,\eta,\xi-\eta)}{\Lambda(-\xi)\pm\Lambda(\eta)\pm\Lambda(\xi-\eta)} \Big| 
  \lesssim \min(\langle \xi\rangle,\langle\eta\rangle,\langle\xi-\eta\rangle)^C,
\end{align}
for all frequencies $\xi$ and $\eta$. Here $q_{\pm\pm}$ are suitable multipliers that depend on 
the quadratic part of the original nonlinearity $Q$ in \eqref{tori1} and account for the symmetrization associated to energy estimates.

For water waves systems, quartic energy inequalities like \eqref{quen1} and extended regularity results up to times $T_\e = O(\e^{-2})$ 
have been proved in several $2$d models ($1$d interfaces), both in the Euclidean and in the periodic case: 
pure gravity \cite{WuAG,IoPu2,ADb,HIT}, pure capillarity \cite{IoPu4,IT2}, gravity over a flat bottom \cite{HGIT}, and constant vorticity \cite{ITG}. See \cite{TotzWuNLS} for a $3$d result.
Similar results were obtained earlier for quasi-linear Klein-Gordon equations, see \cite{DelortTAMS} and \cite{DelSzequasi}.
All of these results ultimately rely on the absence of non-trivial quadratic resonances, which is an algebraic condition like \eqref{torinores2}.

\subsubsection*{Iterated normal forms}
In certain cases one can repeat the procedure described above. 
Heuristically, a nonlinear term of homogeneity $\ell$ can be eliminated using normal forms provided there are no non-trivial {\it $(\ell+1)$-resonances}.
More precisely, the natural generalization of the condition \eqref{torinores2} is
\begin{align}
\label{nores}
\Big|\frac{q_{\pm\ldots\pm}(\xi_1,\ldots,\xi_{\ell+1})}{\pm\Lambda(\xi_1)\pm\ldots\pm\Lambda(\xi_{\ell+1})}\Big| 
  \lesssim \big(\text{third highest frequency among }\langle \xi_1\rangle,\ldots,\langle\xi_{\ell+1}\rangle\big)^{C},
\end{align}
for some $C\geq 1$ and for any frequencies $\xi_1,\ldots,\xi_{\ell+1}$ satisfying $\xi_1+\ldots+\xi_{\ell+1}=0$. One could reasonably expect to be able to prove longer $O(\e^{-\ell})$ extended lifespan for systems for which one could verify algebraic conditions such as \eqref{nores}. 


Despite the formal similarity, we remark that usually it is substantially more difficult to verify conditions like \eqref{nores} 
on the absence of high order resonances, than the simpler conditions \eqref{torinores2},
since the denominators might have many zeros.
Among these there are ``trivial'' resonances (for example when $\ell=3$ and $\Lambda$ is even, 
we have  $\Lambda(\xi)-\Lambda(-\xi)+\Lambda(\eta)-\Lambda(-\eta)\equiv 0$ for all frequencies $\xi,\eta$), 
so one has to understand precisely the values of the multipliers $q_{\pm\ldots\pm}$ 
in the numerators in \eqref{nores}, 
and simultaneously account for all the possible symmetrizations coming from energy estimates.

A redeeming feature that allows to carry out this type of arguments, and obtain long $O(\e^{-M})$ lifespan results,
is the presence of (physical) {\it external parameters}.
Indeed, if the dispersion relation $\Lambda$ depends in a non-degenerate way on parameters,
it might be possible to verify algebraic conditions such as \eqref{nores} generically, 
for almost all choices of these parameters. 
This usually works well in one dimension but typically fails on multi-dimensional tori, see the discussion below.
For works in this direction see, for example, the papers of Delort \cite{DelortS1,DelortSd} 
for quasi-linear Klein-Gordon equations on $\mathbb{S}^d$ 
with a mass parameter $m$. 
For works on semilinear PDEs see \cite{Bambusi,DelSzesemi,BaDeGreSze,BaGre,DelIme} and reference therein.

More recently, Berti-Delort \cite{BertiDelort} proved an $O(\e^{-M})$ existence 
result for $1$d periodic gravity-capillary waves in finite depth. This corresponds to $\Lambda(k) = \sqrt{\tanh|k|(\sigma|k|^3+g|k|)}$ 
in \eqref{tori1}, and the result of \cite{BertiDelort} applies for almost all values of the parameters $g,\sigma$. 

Our work in this paper is motivated by the natural question of extending such results
to the setting of water wave models in 3D (2D interfaces).
As we are now going to explain, the case of multi-dimensional tori is very different from the one-dimensional case.
The main reason is that there are a lot more lattice points in $\Z^2$, which lead to unavoidable ``small divisors'' 
(almost resonances).
In fact, in our case, for any choice of the parameters $g,\sigma\in(0,\infty)$ we can only prove degenerate bounds of the form
\begin{align}
\label{SDres}
\big| \Lambda_{g,\sigma}(-\xi)\pm\Lambda_{g,\sigma}(\eta)\pm\Lambda_{g,\sigma}(\xi-\eta) \big|^{-1}
  \lesssim \max(\langle \xi\rangle,\langle\eta\rangle,\langle\xi-\eta\rangle)^{3/2+}.
\end{align}
Compared to \eqref{torinores2}, these bounds on resonances are much weaker and lead to derivative losses 
in normal form transformations that cannot be handled by the general considerations described above. 
Indeed, to our knowledge, prior to this result, no long-time regularity theorems have been proved on multi-dimensional tori 
in the presence of non-trivial resonances.

\subsection*{Acknowledgments}
We would like to thank J.M. Delort for useful discussions on the topic, and for pointing out the work \cite{DelIme}
where extended lifespan results are obtained for strongly semilinear KG equations
in the presence of small divisors.

\subsection{The main theorem}
Our main theorem is a long-time regularity result for the water waves system \eqref{WWE} on $\T^2$,
for almost all choices of parameters $(g,\sigma)\in (0,\infty)^2$. 

\begin{theorem}\label{MainTheo}
Assume that $N_0=30$, $(g,\sigma)\in (0,\infty)^2$, and $g/\sigma\in(0,\infty)\setminus\mathcal{N}$, 
where $\mathcal{N}$ is a set of measure $0$; see \eqref{defN0} for the definition.
Assume that we are given initial data $(h_0,\phi_0)\in H^{N_0+1}(\T^2)\times H^{N_0+1/2}(\T^2)$ satisfying the assumptions
\begin{equation}
\label{mai1}
{\|\langle \nabla \rangle h_0\|}_{H^{N_0}} + {\| \,|\nabla|^{1/2} \phi_0\|}_{H^{N_0}} \leq \e \leq 1,
  \qquad \int_{\T^2}h_0(x)\,dx = 0.
\end{equation}
Then, there is a unique solution $(h,\phi)\in C([0,T_\e]:H^{N_0+1} \times H^{N_0+1/2})$ of the system \eqref{WWE} on $\T^2\times [0,T_\varep]$,
with initial data $(h(0),\phi(0)) = (h_0,\phi_0)$, where
\begin{equation}
\label{mai2}
T_\e \gtrsim_{g,\sigma}\varepsilon^{-5/3}[\log(2/\varepsilon)]^{-2}.
\end{equation}
Moreover, for any $t\in[0,T_\varep]$, 
\begin{equation}
\label{mai3}
{\|\langle \nabla \rangle h(t)\|}_{H^{N_0}} + {\| \,|\nabla|^{1/2} \phi(t) \|}_{H^{N_0}} \lesssim _{g,\sigma}\e,
  \qquad \int_{\T^2}h(x,t)\,dx=0.
\end{equation} 

\end{theorem}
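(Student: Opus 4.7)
My plan is to close a bootstrap argument in a high Sobolev norm after a sequence of partial normal form reductions that remove as much of the quadratic and cubic nonlinearity as the weak small divisor bounds on $\T^2$ allow. Complexifying as in \eqref{WWElin2}, one writes the system as $\partial_t u + i\Lambda_{g,\sigma} u = Q_2(u,\bar u)+C_3(u,\bar u,\bar u)+R_{\geq 4}$, where $Q_2$ and $C_3$ are bilinear and trilinear Fourier multipliers obtained by paralinearizing the Dirichlet--Neumann operator and the mean-curvature term in \eqref{WWE}. The bootstrap hypothesis is
\[
\mathcal{E}_{N_0}(t) := \|\langle\nabla\rangle h(t)\|_{H^{N_0}}^2 + \|\,|\nabla|^{1/2}\phi(t)\|_{H^{N_0}}^2 \leq C_\ast \varepsilon^2
\]
on a maximal interval $[0,T_\ast]$; the aim is to improve $C_\ast$ by a factor of $2$ for $T_\ast\lesssim\varepsilon^{-5/3}[\log(2/\varepsilon)]^{-2}$, which gives \eqref{mai2} and \eqref{mai3} by a standard continuity argument after local existence. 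The zero-mean condition in \eqref{mai1} is propagated by the first equation of \eqref{WWE}.

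The principal analytic input is a quantitative lower bound on the three-wave modulation $\Phi^{(3)}_{\iota_1\iota_2\iota_3}(\xi,\eta)=\iota_1\Lambda(\xi)+\iota_2\Lambda(\eta)+\iota_3\Lambda(\xi+\eta)$ and its four-wave analogue $\Phi^{(4)}$, valid for $g/\sigma$ outside the measure-zero set $\mathcal{N}$ named in the theorem. Following ingredient (3) of the abstract, I would establish these via a lattice-point counting argument in $\Z^2$: the zero loci of $\Phi^{(3)}$ and $\Phi^{(4)}$ are real-analytic hypersurfaces depending non-degenerately on $g/\sigma$, and the measure of bad parameters in each dyadic frequency shell decays fast enough to permit a Borel--Cantelli extraction of $\mathcal{N}$. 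One obtains \eqref{SDres} together with an analogous $O(\max\langle\xi_i\rangle^{\kappa})$ lower bound for $\Phi^{(4)}$ away from its trivial zero set $\{\xi_1+\xi_3=\xi_2+\xi_4=0\}$ and its permutations, on which evenness of $\Lambda$ forces $\Phi^{(4)}\equiv 0$. With this in hand I would perform a quadratic normal form by conjugating with the bilinear symbol $m^{(2)}_{\iota_1\iota_2}/\Phi^{(3)}_{\iota_1\iota_2-}$. The $3/2+$ loss in \eqref{SDres} would ordinarily destroy derivative balance, and here one exploits ingredient (1), the Zakharov Hamiltonian \eqref{CPWHam}: the symbol $m^{(2)}$ vanishes on the resonant hypersurface to an order that precisely cancels the apparent singularity of the normal form, so after careful paradifferential symmetrization (ingredient (4)) the transformation is tame in $H^{N_0}$ and produces a modified energy $\mathcal{E}_{N_0}+\mathcal{B}_3$ whose time derivative is purely cubic plus negligible remainders.

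The second and most delicate step is a cubic normal form reducing the remaining cubic contribution to a quartic one. Non-trivial cubic resonances are removed by dividing by $\Phi^{(4)}$ using the generic lower bound from the previous step. The trivial cubic resonances on $\{\xi_1+\xi_3=\xi_2+\xi_4=0\}$ are handled via ingredient (2), the time-reversibility of \eqref{WWE}, which forces the coefficient of the surviving resonant trilinear term, restricted to the trivial locus, to have a structural symmetry that makes its contribution to $\frac{d}{dt}(\mathcal{E}_{N_0}+\mathcal{B}_3+\mathcal{B}_4)$ cancel in the real part. The upshot is an approximate quartic inequality of the schematic form $\frac{d}{dt}(\mathcal{E}_{N_0}+\mathcal{B}_3+\mathcal{B}_4)\lesssim [\log(2/\varepsilon)]^{2}\,\mathcal{E}_{N_0}^{5/2}$, where the fractional exponent $5/2$ and the $\log^2$ encode the balance between the Hamiltonian and reversibility smoothing gains and the residual derivative loss summed over dyadic frequency annuli up to the truncation scale allowed by the bootstrap. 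Integrating over $[0,T_\ast]$ improves $C_\ast$ exactly when $T_\ast\lesssim \varepsilon^{-5/3}[\log(2/\varepsilon)]^{-2}$, delivering \eqref{mai2}. The main obstacle throughout is precisely the $3/2+$ derivative loss in \eqref{SDres}: without the Hamiltonian cancellation on the three-wave resonant hypersurface and the reversibility cancellation on the trivial four-wave resonant locus, no normal form procedure could close in a fixed Sobolev space, and the exponent $5/3$ is the sharp balance these structural gains strike against the arithmetic density of near-resonances on $\Z^2$.
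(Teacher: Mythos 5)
Your outline names the four ingredients from the paper's abstract and arranges them in a plausible order, but the core analytic step, the claim that the quadratic normal form transformation is ``tame in $H^{N_0}$'' and yields a globally defined modified energy $\mathcal{E}_{N_0}+\mathcal{B}_3+\mathcal{B}_4$, does not hold, and the paper's proof does something structurally different. The Hamiltonian depletion factor $\mathfrak{d}(\xi,\eta)$ in \eqref{ProBulk2sym} vanishes only to second order in the angle, which in the small-modulation region translates into a gain of exactly one derivative (this is \eqref{introcorre} and Remark~\ref{Remark1}(1)); it does not ``precisely cancel'' the $\langle\xi\rangle^{3/2+}$ small-divisor singularity of \eqref{SDres}. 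After dividing by the three-wave modulation, the resulting correction still loses $\langle\xi\rangle^{1/2+}$ at high frequency, so a pointwise modified energy built from this symbol is not comparable to $\mathcal{E}_{N_0}$. What the paper actually does is never construct a global normal-form transformation: it integrates by parts in time only on the region where the modulation is bounded below (the cutoffs $\varphi_{>0}(\Phi)$ and $\varphi_{(B,0]}(\Phi)$ in Sections~\ref{SecEn1}--\ref{SecEn2}), and at the same time truncates the bulk in frequency at a scale $2^K\approx\varepsilon T_\varepsilon$; the high-frequency piece of the energy increment is estimated directly, with no normal form, using only the one-derivative gain from $\mathfrak{d}$, giving a contribution $\sim T_\varepsilon\,\varepsilon^3\,2^{-K}$. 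The exponent $5/3$ in \eqref{mai2} comes precisely from optimizing $K$ between this direct estimate and the small-divisor loss accumulated in the (twice-iterated, and still truncated) normal form on low frequencies; it is not visible in a truncation-free differential inequality.

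This mismatch surfaces quantitatively in your final display. If $\frac{d}{dt}(\mathcal{E}_{N_0}+\mathcal{B}_3+\mathcal{B}_4)\lesssim[\log(2/\varepsilon)]^2\,\mathcal{E}_{N_0}^{5/2}$ held with a constant independent of $\varepsilon$, then under the bootstrap $\mathcal{E}_{N_0}\approx\varepsilon^2$ the right-hand side would be $\approx[\log(2/\varepsilon)]^2\varepsilon^5$, and integrating would give an energy increment $\lesssim T[\log(2/\varepsilon)]^2\varepsilon^5$, which is $\lesssim\varepsilon^2$ for all $T\lesssim\varepsilon^{-3}[\log(2/\varepsilon)]^{-2}$ — a far longer lifespan than Theorem~\ref{MainTheo} claims and certainly not what the argument proves. (You also call this a ``quartic'' inequality, which would be $\mathcal{E}_{N_0}^{2}$ and would give $\varepsilon^{-2}$; the $\mathcal{E}_{N_0}^{5/2}$ written is actually quintic.) The genuine content of the paper is precisely that a clean quartic inequality of the form \eqref{quen1} is \emph{not} achievable on $\T^2$ because of the $3/2+$ small-divisor loss; the frequency-cutoff parameter and the balance $T_\varepsilon\varepsilon^3 2^{-K}\sim T_\varepsilon\varepsilon^5 2^{(2+)K}$ (after two integrations by parts, using the $1/2$-derivative gain of Proposition~\ref{prop2} and the reality/reversibility cancellation of the trivial quartic resonances) is what produces the exponent $5/3$. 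Your description of the small-divisor measure estimate, the role of the trivial resonance locus $\xi=\rho$, and the zero-mean propagation is consistent with the paper, but without the frequency truncation and the two-regime decomposition of the energy increment the scheme as stated cannot close.
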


The main point of our theorem is the nontrivial lifespan of solutions given by the lower bound \eqref{mai2}.
For $\varep\ll 1$, this goes beyond the usual time of existence $T\gtrsim_{g,\sigma} \varep^{-1}$ given by the local theory
which holds for all pairs $(g,\sigma)\in (0,\infty)^2$. We remark that the power of $\log(2/\varepsilon)$ in \eqref{mai2} 
can be improved, but it is not clear to us whether the main power $\e^{-5/3}$ can be improved.

\subsection{Main ideas and sketch of the proof}\label{ssecIdeas}

We outline first some of the main ideas of our proof on a model equation 
which retains some of the most relevant properties of the water waves system \eqref{WWE}.

\subsubsection{The model equation and a partial result}
Let us consider the evolution equation
\begin{align} 
\label{ModelWW}
\begin{split}
& \partial_t U + i\Lambda U = \nabla V\cdot\nabla U + \tfrac{1}{2}\Delta V\cdot U =: \mathcal{N}, 
\\ & \Lambda := \Lambda_{g,1} = \sqrt{g|\nabla|+|\nabla|^3}, \qquad V := P_{\leq 10}\Im U, \qquad U(0) = U_0,
\end{split}
\end{align}
for a scalar unknown $U:I_t\times \T^2_x \rightarrow \C$,
where the projection $P_{\leq 10}$ (and the usual standard Littlewood-Paley projections) 
are defined at the beginning of Subsection \ref{SecParaCalc}.

The model \eqref{ModelWW}, which was also used in \cite{DIPP}, is a good substitute for the full system \eqref{WWE}, since the linear part of \eqref{ModelWW} coincides with the linearization of the full system \eqref{WWE}, see \eqref{WWElin}--\eqref{WWElin2} with $\sigma =1$, and the solutions of \eqref{ModelWW} satisfy the $L^2$ conservation law
\begin{equation}
\label{modelcons}
{\|U(t)\|}_{L^2} = {\|U_0\|}_{L^2}, \qquad -\infty < t < \infty,
\end{equation}
which is a good substitute for the Hamiltonian structure of the original water wave systems. Moreover, the quasilinear nonlinearity is quadratic with a structure similar to the main term $iT_{V\cdot\zeta}$ in the paralinearized 
version of the full system \eqref{eqUN}.

The system \eqref{ModelWW} has the advantage of being algebraically simple, so we can use it to explain the main mechanism of the proof. To prove long-term regularity we implement a bootstrap argument: assume that $\e\ll 1$ and $U \in C([0,T_\e]:H^{N_0}(\T^2))$ is a solution of \eqref{ModelWW} satisfying 
\begin{equation}
\label{modeldata}
{\| U_0 \|}_{H^{N_0}} \leq \e \ll 1\qquad \text{ and }\qquad {\| U(t) \|}_{H^{N_0}} \leq \KK_g \e\text{ for any }t\in[0,T_\e],
\end{equation}
where $\KK_g\geq 2$ is a large constant. We would then like to prove the better bounds
\begin{equation}
\label{modelbootimp}
{\| U(t) \|}_{H^{N_0}} \leq \KK_g \e/2\text{  for any }t\in[0,T_\e].
\end{equation}

\smallskip
{\it{Step 1.}} As in some of our earlier work \cite{IoPu3,IoPu4}, we use a quasi-linear I-method. Let $W := \langle\nabla\rangle^N U$, and consider the basic energy functional, written in the Fourier space,
\begin{equation*}
\mathcal{E}_N(t) := \| \langle\nabla\rangle^N U(t) \|_{L^2}^2 = \frac{1}{(2\pi)^2} \sum_{\xi\in\Z^2} |\widehat{W}(\xi,t)|^2,
\end{equation*}
where $\what{W}(\xi)$ is the Fourier coefficient of $W$ at the frequency $\xi$. 
Using \eqref{ModelWW} we calculate
\begin{equation}
\label{evol1}
\frac{d}{dt}\mathcal{E}_N(t) = \Re \sum_{\xi,\eta\in\Z^2}m(\xi,\eta) \what{W}(\eta) \what{\overline{W}}(-\xi) \, i \what{U}(\xi-\eta),
\end{equation}
with
\begin{equation}
\label{evolsym1}
m(\xi,\eta) := c \, [(\xi-\eta)\cdot (\xi+\eta)] \frac{(1+|\eta|^2)^N - (1+|\xi|^2)^N}{(1+|\eta|^2)^{N/2}(1+|\xi|^2)^{N/2}} \varphi_{\leq 10}(\xi-\eta),
   \qquad c\in\R.
\end{equation}
We notice that $m(\xi,\eta)$ satisfies
\begin{equation}
\label{evolsym2}
m(\xi,\eta)=\mathfrak{d}(\xi,\eta)m'(\xi,\eta),\quad\text{ where }\quad\mathfrak{d}(\xi,\eta):=\frac{[(\xi-\eta)\cdot(\xi+\eta)]^2}{1+|\xi+\eta|^2},\quad |m'|\approx 1.
\end{equation}
The {\it{depletion factor}} $\mathfrak{d}$ plays an important role in our argument.
The presence of this factor is related to the exact conservation law \eqref{modelcons} and is the starting point for our proof.

We define the {\it{linear profiles}} $u$ and $w$ by the formulas
\begin{align}
\label{evol2}
U(t) = e^{-it\Lambda}u(t), \qquad W(t) = e^{-it\Lambda}w(t),
\end{align}
and rewrite the term in the right-hand side of \eqref{evol1} in terms of the profiles $u$ and $w$.
Thus, to prove the desired estimates \eqref{modelbootimp}, it suffices to bound the energy increment 
\begin{align}
\label{evolaim}
\Big| \Re \int_0^T  \sum_{\xi,\eta\in\Z^2} m(\xi,\eta) e^{it\Phi(\xi,\eta)} \what{w}(\eta) \what{\overline{w}}(-\xi) \, i \what{u}(\xi-\eta) \, dt \Big|
  \lesssim \e^2,
\end{align}
for $T\leq T_\e$, where $\Phi$ is the {\it cubic phase function}
\begin{align}\label{phiphase}
\Phi(\xi,\eta):= \Lambda(\xi) - \Lambda(\eta) - \Lambda(\xi-\eta).
\end{align}

\smallskip
{\it{Step 2.}} The contribution of large modulations, 
corresponding to inserting a cutoff function of the form $\varphi_{>1}(\Phi(\xi,\eta))$ in the left-hand side of \eqref{evolaim}, 
can be bounded according to the general normal form argument described in \eqref{quen1}--\eqref{torinores2}. 
We use the equations for the profiles
\begin{align}
\label{evol3}
\partial_t u = e^{it\Lambda}\mathcal{N}, \qquad \partial_t w = e^{it\Lambda} (\nabla V \cdot \nabla W + O(V \cdot W)),
\end{align}
integrate by parts in time, and then re-symmetrize the resulting quartic expressions to avoid loss of derivatives. 
The corresponding contribution is bounded by $CT_\e(\KK_g\e)^4$, which is acceptable.

\smallskip
{\it{Step 3.}} The contribution of small modulations $|\Phi(\xi,\eta)|\lesssim 1$ and high frequencies 
which, in general, is the most dangerous part, can be bounded using the frequency gain in \eqref{evolsym2}. 
More precisely, we observe that the symbol $m$ in \eqref{evolsym1}-\eqref{evolsym2} satisfies the bounds 
\begin{align}
\label{introcorre}
|m(\xi,\eta)| \lesssim \frac{[(\xi-\eta)\cdot(\xi+\eta)]^2}{1+|\xi+\eta|^2} \lesssim \frac{1 + |\Phi(\xi,\eta)|}{1 + |\xi+\eta|},
\end{align}
on the support of the sum in \eqref{evolaim}. Thus
\begin{align}
\label{evolaimlh}
\Big|\int_0^T  \sum_{\xi,\eta\in\Z^2} m(\xi,\eta) e^{it\Phi(\xi,\eta)} \what{w}(\eta) \what{\overline{w}}(-\xi) \, i \what{u}(\xi-\eta)\varphi_{\leq 0}(\Phi(\xi,\eta))\varphi_{\geq D}(\xi)\, dt \Big|\lesssim T_\e(\KK_g\e)^32^{-D},
\end{align}
where $D=D(\e)\gg 1$ is a suitable parameter to be fixed.

\smallskip
{\it{Step 4.}} Finally, we bound the contribution of small modulations $|\Phi(\xi,\eta)|\lesssim 1$ and low frequencies $|\xi|,|\eta|\lesssim 2^D$.
Here we need the parameter $g$ to be in the complement of a set of measure $0$ to avoid exact resonances.
More precisely, using a counting argument we show that for almost every $g\in(0,\infty)$ we have the sharp bounds
\begin{align}
\label{evolSD}
|\Phi(\xi,\eta)|^{-1} \lesssim \langle\xi\rangle^{3/2+},
\end{align}
for all $\xi,\eta\in\mathbb{Z}^2$, $\xi-\eta\neq 0$, $|\xi|\approx|\eta| \gg |\xi-\eta|\approx 1$. 
See Proposition \ref{prop1} for a precise statement. We integrate by parts in time, and bound the contribution of the sum by
\begin{align}
\label{evol15}
\begin{split}
C\Big|\Re \int_0^T  \sum_{\xi,\eta\in\Z^2} \frac{m(\xi,\eta)}{i\Phi(\xi,\eta)} \varphi_{\leq 0}(\Phi(\xi,\eta)) \varphi_{\leq D}(\xi)
  \, e^{it\Phi(\xi,\eta)} \what{\partial_t w}(\eta) \what{\overline{w}}(-\xi) \, i \what{u}(\xi-\eta) \, dt\Big|
\end{split}
\end{align}
plus similar or easier terms. 
In analyzing \eqref{evol15}, we notice that we have a $\langle\xi\rangle^{3/2+}$ 
loss from the denominator $|\Phi(\xi,\eta)|$, a $\langle\xi\rangle^{-1}$ gain from the symbol $m$ (see \eqref{introcorre}), 
a $\langle\xi\rangle$ loss from the derivative loss in the equation \eqref{evol3} for $w$, 
and an $\KK_g\e$ gain from the quadratic terms in \eqref{evol3}. Thus the expression in \eqref{evol15} is bounded by
\begin{equation*}
CT_\e2^{(3/2+)D}(\KK_g\e)^4.
\end{equation*}
Comparing now with \eqref{evolaimlh}, the two main conditions are $T_\e\e^32^{-D}\ll\e^2$ and $T_\e2^{(3/2+)D}\e^4\ll\e^2$.
These conditions are compatible if and only if $T_\e\ll\e^{-7/5+}$, by taking $2^D\approx\e^{-2/5}$. 

\subsubsection{Quartic resonances and the full result}
The argument outlined above would give us a partial extended existence result for times $T_\e$ of the order $O(\e^{-7/5+})$. 
To obtain the desired result with $T_\e=O(\e^{-5/3+})$ we need to expand substantially the analysis in Step 4. 

More precisely, we examine the formula \eqref{evol15} and use the equation \eqref{evol3} for the factor $\partial_t w$.
The main contribution comes from the term $e^{it\Lambda} \nabla V \cdot \nabla W$, 
and we can further express this contribution in terms of the profiles $u$ and $w$. 
We are thus led to consider quartic expression with oscillatory factors given by {\it quartic phase} or four-way modulation functions
\begin{align}
\label{evol17}
\Psi_\iota(\xi,\eta,\rho) := \Lambda(\xi) -  \Lambda(\rho)+\iota\Lambda(\eta-\rho) - \Lambda(\xi-\eta),\qquad\iota\in\{+,-\}.
\end{align}
A first helpful observation is that the term $\partial_t w \approx e^{it\Lambda} \nabla V \cdot \nabla W$ does not give a loss
of a full derivative, but only of a half derivative in the worst case scenario when $|\Psi_\iota|$ is small.
This is another manifestation of a depletion phenomenon similar to the one 
described after \eqref{evolsym2}.

To obtain the full result we have to integrate by parts in time again.
To do this we need good lower bounds on the quartic phase functions \eqref{evol17}.
Such bounds follow from Proposition \ref{prop2} which essentially states that, after removing another set of measure 0 of parameters $g$,
\begin{align}
\label{SDres2}
\big| \Psi_\iota(\xi,\eta,\rho) \big|^{-1}  \lesssim \langle \xi\rangle^{1/2},
\end{align}
when $|\xi|\approx|\rho| \gg |\xi-\eta|\approx|\rho-\eta| \approx 1$. These bounds hold provided that the cubic modulation $\Phi(\xi,\eta)$ is sufficiently small, 
and the frequency variables are not a ``trivial'' cubic resonance, i.e. $\xi=\rho$, $\iota=+$, with the notation in \eqref{evol17}.
We then show that the contributions from the trivial resonances vanish
due to the reality of the symbol $m$ in \eqref{evolsym1}; conceptually, this fact is connected to the time-reversibility of the equation.
Such property allows us to integrate by parts again and, after more analysis, eventually reach the result of Theorem \ref{MainTheo}.

\medskip
\subsubsection{The full system and the ``improved good variable''}
We return now to the proof of the main theorem, for the full water waves system \eqref{WWE}. 
This system is, of course, more complicated than the simplified model \eqref{ModelWW}. 
However, it has remarkable (Hamiltonian) structure, which we need to exploit. 

The starting point of the proof for the model equation \eqref{ModelWW} 
outlined above is the gain of one derivative in the low modulation region, see \eqref{introcorre}.
In the case of the full system \eqref{WWE}, we are able to achieve the same gain by constructing a new variable using paradifferential calculus. 
This new variable can be regarded as a refinement of the good unknown of Alinhac (used for the local existence theory by Alazard-Burq-Zuily \cite{ABZ1,ABZ2}).
The advantage over Alinhac's good unknown is that our variable not only avoids losses of derivatives, 
but it actually gives a gain one derivative in energy estimates in the most dangerous region of the phase space
where the modulation is small but the frequencies are large.

This type of ``improved good variable'' has been already used in our earlier work \cite{DIPP}.
Here we prove more precise bounds, with a better description of the nonlinearity, see Proposition \ref{proeqU}.
This precise structure of the nonlinearity is important,
in particular the reality of the symbol $a_{++}$ in the quadratic term $\mathcal{Q}_U$,
and the gain of $3/2$ derivatives of the quadratic term $\mathcal{R}_U$.

\subsection{Organization}
The rest of the paper is organized as follows. In Section \ref{section2} we introduce our main notation, prove two propositions on  lower bounds for cubic and quartic phase functions, and state our main bootstrap proposition. In Section \ref{SecEqs} we use paradifferential calculus to construct our main variable $U$ and derive suitable evolution equations for this variable and its derivatives. In Section \ref{SecEn1} we start the proof of our main energy estimates, and show how to control the contribution of large modulations. Finally, in Section \ref{SecEn2} we bound the contributions from small modulations, using the lower bounds for the cubic and the quartic phase functions.

\section{Definitions, lemmas, and the main bootstrap proposition}\label{section2}

\subsection{Small divisor estimates}\label{SecSD}

Our proof of the main theorem depends in a critical way on understanding the structure of the resonances of the evolution.
In this subsection we show how to estimate from below quadratic phases (or three-way resonance functions) and certain cubic phases (four-way resonance functions),  
for almost all parameters $(g,\sigma)\in (0,\infty)^2$.
The set of parameters $g/\sigma$ that we need to exclude for our main result to hold is
\begin{align}
\label{defN0}
\mathcal{N} = \mathcal{N}_{1/2} \cup \mathcal{R},
\end{align}
where $\mathcal{N}_{1/2}$ and $\mathcal{R}$ are defined respectively in Propositions \ref{prop1} and \ref{prop2}.

\subsubsection{3-way resonances}\label{SecSD1}

The main conclusion of this subsection can be stated as follows: for all $\kappa >0$ and almost all parameters $g/\sigma$ the quadratic phase functions
\begin{align}\label{moddef}
\Phi_{g,\sigma}(\xi,\eta) = \Lambda_{g,\sigma}(\xi) \pm \Lambda_{g,\sigma} (\xi-\eta) \pm \Lambda_{g,\sigma}(\eta) ,\qquad \Lambda_{g,\sigma}(v)=\sqrt{g|v|+\sigma|v|^3},
\end{align}
satisfy the lower bounds
\begin{align}
\label{SDbound0}
|\Phi_{g,\sigma}(\xi,\eta)|  \gtrsim_{g,\sigma,\kappa} |\xi|^{-3/2} (\log(2+|\xi|))^{-1-\kappa} \, |\xi-\eta|^{-4},
\end{align}
for all $\xi \neq \eta \in \Z^2$ with $|\xi-\eta|\leq \min(|\xi|,|\eta|)$, and all choices of the signs $+$ or $-$.

To prove this precisely, we start with a lemma:

\begin{lemma}\label{lem1} 
Assume that $a,b,c\in[1,\infty)$ and $a\leq b\leq c\leq a+b$. Let $F:[0,\infty)\to\mathbb{R}$,
\begin{equation}\label{ml1}
F(x):=\sqrt{ax+a^3}+\sqrt{bx+b^3}-\sqrt{cx+c^3}.
\end{equation}

(i) If $|F(x_0)|\leq 1/10$ for some $x_0\in[0,\infty)$ then 
\begin{equation}\label{ml2}
F'(x_0)\geq \frac{a}{10\sqrt{ax_0+a^3}}.
\end{equation}

(ii) As a consequence, the function $F$ vanishes at most at one point $p=p(a,b,c)\in[0,\infty)$. 
Moreover, for any $B\geq 1$ and $\delta\in(0,1/20]$ the set of points
\begin{equation}\label{ml3}
X_{B,\delta}:=\{x\in (0,B):\,|F(x)|<\delta\}
\end{equation}
is an interval (or the empty set) with $|X_{B,\delta}|\leq 20\delta\sqrt{a+B}$. Finally,
\begin{equation}\label{ml13.1}
X_{B,\delta}=\emptyset\quad\text{ unless }\quad c-b\lesssim_B a^{3/2}c^{-1/2}. 
\end{equation} 
\end{lemma}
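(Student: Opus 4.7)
My plan splits along the two parts of the lemma. For part (i), I would directly compute
$F'(x) = \tfrac{1}{2}\bigl[\tfrac{\sqrt{a}}{\sqrt{x+a^2}} + \tfrac{\sqrt{b}}{\sqrt{x+b^2}} - \tfrac{\sqrt{c}}{\sqrt{x+c^2}}\bigr]$,
and reformulate the target bound $F'(x_0) \geq a/(10\sqrt{ax_0+a^3}) = \sqrt{a}/(10\sqrt{x_0+a^2})$ as the equivalent inequality $\tfrac{4\sqrt{a}}{5\sqrt{x_0+a^2}} + \tfrac{\sqrt{b}}{\sqrt{x_0+b^2}} \geq \tfrac{\sqrt{c}}{\sqrt{x_0+c^2}}$. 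Using the monotonicity $a \leq b \leq c$, I would replace both denominators on the left by the larger $\sqrt{x_0+c^2}$, reducing to the scalar inequality $\tfrac{4}{5}\sqrt{a} + \sqrt{b} \geq \sqrt{c}$, equivalently $\sqrt{a}+\sqrt{b}-\sqrt{c} \geq \sqrt{a}/5$. This last step follows from the conjugation identity $\sqrt{a}+\sqrt{b}-\sqrt{c} = \tfrac{a+b-c+2\sqrt{ab}}{\sqrt{a}+\sqrt{b}+\sqrt{c}} \geq \tfrac{2\sqrt{ab}}{\sqrt{a}+\sqrt{b}+\sqrt{c}}$ combined with $\sqrt{a}+\sqrt{b}+\sqrt{c} \leq (2+\sqrt{2})\sqrt{b}$ (a consequence of $a\leq b$ and $c\leq a+b\leq 2b$) and $\sqrt{ab}\geq \sqrt{a}$. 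Notably, this approach does not actually invoke the hypothesis $|F(x_0)|\leq 1/10$.

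Part (ii) follows from (i) in several steps. Uniqueness of the zero of $F$ is by contradiction: two zeros $x_1 < x_2$ would force an interior critical point $x_e$ with $F'(x_e)=0$, contradicting (i) applied at $x_e$ (where $F(x_e)=0$ trivially satisfies the hypothesis). The set $X_{B,\delta}$ is an interval since (i) gives strict positivity of $F'$ throughout the region $\{|F|\leq 1/10\}$ in which $X_{B,\delta}$ sits (for $\delta\leq 1/20$), so $F$ is strictly increasing there. For the length estimate, applying (i) uniformly on $[0,B]$ yields $F'\geq \sqrt{a}/(10\sqrt{B+a^2})$; the elementary bound $B+a^2 \leq a(a+B)$ (valid for $a, B\geq 1$) simplifies this to $F'\geq 1/(10\sqrt{a+B})$, and since $F$ varies by at most $2\delta$ across $X_{B,\delta}$, integration yields $|X_{B,\delta}| \leq 20\delta\sqrt{a+B}$.

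For the emptiness criterion \eqref{ml13.1}, I would pick $x_0 \in X_{B,\delta}$ and rewrite $\sqrt{c(x_0+c^2)} - \sqrt{b(x_0+b^2)} = \sqrt{a(x_0+a^2)} - F(x_0)$, so that this difference is at most $\sqrt{a(x_0+a^2)}+1/20$. Applying the conjugation identity $\sqrt{c(x_0+c^2)} - \sqrt{b(x_0+b^2)} = \tfrac{(c-b)(x_0+c^2+bc+b^2)}{\sqrt{c(x_0+c^2)}+\sqrt{b(x_0+b^2)}}$, together with the denominator lower bound $x_0+c^2+bc+b^2 \geq c^2$, and the upper bounds $\sqrt{a(x_0+a^2)} \lesssim_B a^{3/2}$ and $\sqrt{c(x_0+c^2)} \lesssim_B c^{3/2}$ (both valid for $a, c \geq 1$ and $x_0\leq B$), I would deduce $c - b \lesssim_B a^{3/2}c^{3/2}/c^2 = a^{3/2}/\sqrt{c}$.

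The main obstacle is arranging part (i) to hold uniformly in $x_0$. A naive integral representation $\tfrac{\sqrt{c}}{\sqrt{x_0+c^2}} - \tfrac{\sqrt{b}}{\sqrt{x_0+b^2}} = \int_b^c \tfrac{x_0-t^2}{2\sqrt{t}(x_0+t^2)^{3/2}}\,dt$ has a sign-changing integrand (positive for $t<\sqrt{x_0}$, negative for $t>\sqrt{x_0}$), which only yields a clean derivative bound when $x_0 \leq b^2$. The key insight that unlocks the whole argument is to bypass this via the uniform replacement of all denominators by $\sqrt{x_0+c^2}$, thereby collapsing the problem to a scalar inequality in $a,b,c$ that can be dispatched by elementary algebra.
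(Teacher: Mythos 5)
Your proof is correct and takes a genuinely different route in part (i). The paper's argument discards the nonnegative term $\frac{b}{\sqrt{bx+b^3}}-\frac{b}{\sqrt{cx+c^3}}$ entirely and then uses the hypothesis $|F(x_0)|\leq 1/10$ to establish $\sqrt{cx_0+c^3}\geq\tfrac{3}{2}\sqrt{ax_0+a^3}$, from which the derivative bound follows. Your approach keeps both terms, substitutes the largest denominator $\sqrt{x_0+c^2}$ uniformly, and reduces the whole of (i) to the scalar inequality $\tfrac{4}{5}\sqrt{a}+\sqrt{b}\geq\sqrt{c}$ (verified via conjugation and $a\leq b\leq c\leq a+b$). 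Because that scalar inequality is universal, you never need the hypothesis $|F(x_0)|\leq 1/10$ — you in fact prove the unconditional bound $F'(x)\geq \frac{a}{10\sqrt{ax+a^3}}$ for all $x\geq 0$, which is stronger than the lemma's conditional statement. This in turn simplifies part (ii): $F$ is strictly increasing everywhere, so uniqueness of the zero and the interval structure of $X_{B,\delta}$ are immediate, and the paper's more delicate topological argument showing $\{|F|<1/20\}$ is a single interval becomes unnecessary. Your length estimate and emptiness criterion then proceed along the same lines as the paper's. One small correction: in your Rolle's-theorem uniqueness argument you assert $F(x_e)=0$ at the interior critical point, which Rolle does not give you — you only know $F(x_1)=F(x_2)=0$, not the value at $x_e$. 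This slip is harmless here precisely because your version of (i) does not use the hypothesis on $F(x_e)$; but worth noting that if one only had the paper's conditional (i), this Rolle argument would fail, and one would instead need the paper's route of first identifying the interval $I=\{|F|<1/20\}$ and arguing monotonicity there.
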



\begin{proof} (i) We write, using $a+b\geq c$,
\begin{equation*}
\begin{split}
2F'(x)&=\frac{a}{\sqrt{ax+a^3}}+\frac{b}{\sqrt{bx+b^3}}-\frac{c}{\sqrt{cx+c^3}}\\
&\geq \Big(\frac{a}{\sqrt{ax+a^3}}-\frac{a}{\sqrt{cx+c^3}}\Big)+\Big(\frac{b}{\sqrt{bx+b^3}}-\frac{b}{\sqrt{cx+c^3}}\Big).
\end{split}
\end{equation*}
Since $1\leq a\leq b$, if $|F(x_0)|\leq 1/10$ then $\sqrt{cx_0+c^3}\geq \sqrt{bx_0+b^3}$, $\sqrt{cx_0+c^3}\geq (3/2)\sqrt{ax_0+a^3}$. Thus
\begin{equation*}
2F'(x_0)\geq \frac{a}{\sqrt{ax_0+a^3}}-\frac{a}{\sqrt{cx_0+c^3}}\geq \frac{a}{3\sqrt{ax_0+a^3}},
\end{equation*} 
as claimed.

(ii) Let 
\begin{equation}\label{ml4}
I:=\{x\in [0,\infty):\,|F(x)|<1/20\}.
\end{equation}
Clearly $I$ is an open set in $[0,\infty)$, thus a union of open intervals. We claim that $I$ consists, in fact, of at most one such open interval. 
Indeed, assume that $(y_0,z_0)\subseteq I$ is an open interval and $z_0\notin I$. 
Since $F'(z_0)>0$ (see \eqref{ml2}) we have $F(z_0)=1/20$ and there is $\delta>0$ such that $[z_0,z_0+\delta)\cap I=\emptyset$.
In fact we claim that $[z_0,\infty)\cap I=\emptyset$. 
Otherwise let $p:=\inf\,[z_0,\infty)\cap I$, so $p>z_0$, $F(p)=1/20$, and $F'(p)>0$ (due to \eqref{ml2}); 
in particular there is $p'<p$ close to $p$ such that $F(p')<F(p)$, thus $p'\in I$, contradicting the definition of $p$.

Thus $I\subset[0,\infty)$ is an open interval and, in view of \eqref{ml2},
\begin{equation*}
F'(x)\geq \frac{1}{10\sqrt{x+a}}\qquad\text{ for any }\qquad x\in I.
\end{equation*} 
It follows that $|X_{B,\delta}|\leq 20\delta\sqrt{a+B}$, as claimed. Moreover, since $\sqrt{c^3+cx}-\sqrt{b^3+bx}\gtrsim_B (c-b)c^{1/2}$ for all $x\in[0,B]$ and $1\leq b\leq c$, the conclusion \eqref{ml13.1} follows as well. 
\end{proof}

For $y\in(0,\infty)$ we consider now the dispersion relations $\Lambda_y:\mathbb{Z}^2\to\mathbb{R}$
\begin{equation}\label{ml10}
\Lambda_y(v):=\sqrt{y|v|+|v|^3}.
\end{equation}
For $\iota_1,\iota_2,\iota_3\in\{+,-\}$ and $y\in(0,\infty)$ we define also the resonance functions
\begin{equation}\label{ml11}
\Psi_y^{\iota_1,\iota_2,\iota_3}(v_1,v_2,v_3):=\iota_1\Lambda_y(v_1)+\iota_2\Lambda_y(v_2)+\iota_3\Lambda_y(v_3),
\end{equation}
where $v_1,v_2,v_3\in\mathbb{Z}^2$, $v_1+v_2+v_3=0$. 
For $\kappa\in(0,1]$ let
\begin{equation}\label{ml12}
K_\kappa(v_1,v_2,v_3):=\frac{1}{\langle v\rangle_{max}^{3/2}\log(1+\langle v\rangle_{max})^{1+\kappa}}
  \frac{1}{\min(\langle v_1\rangle,\langle v_2\rangle,\langle v_3\rangle)^4},
\end{equation} 
where $\langle v\rangle_{max}:=\max(\langle v_1\rangle,\langle v_2\rangle,\langle v_3\rangle)$. For $B,j\in[5,\infty)\cap\mathbb{Z}$ let
\begin{equation}\label{ml15}
\begin{split}
\mathcal{N}_{j,\kappa}^{B}:=&\big\{y\in(0,B):\,|\Psi_y^{\iota_1,\iota_2,\iota_3}(v_1,v_2,v_3)|< 2^{-j}K_\kappa(v_1,v_2,v_3)
  \text{ for some }\\
&\iota_1,\iota_2,\iota_3\in\{+,-\}\text{ and some }v_1,v_2,v_3\in\mathbb{Z}^2_\ast\text{ with }v_1+v_2+v_3=0\big\},
\end{split}
\end{equation}
where $\mathbb{Z}^2_\ast:=\mathbb{Z}^2\setminus\{(0,0)\}$. Let
\begin{equation}\label{ml16}
\mathcal{N}_\kappa:=\bigcup_{B\geq 5}\big(\bigcap_{j\geq 5}\mathcal{N}_{j,\kappa}^B\big).
\end{equation}

\begin{proposition}\label{prop1} 
The set $\mathcal{N}_\kappa\subseteq (0,\infty)$ has measure $0$, for any $\kappa\in(0,1]$. Moreover, for any $y\notin\mathcal{N}_\kappa$ there is a constant $c_y>0$ such that
\begin{equation}\label{ml17}
|\Psi_y^{\iota_1,\iota_2,\iota_3}(v_1,v_2,v_3)|\geq c_yK_\kappa(v_1,v_2,v_3)
\end{equation}
for any $\iota_1,\iota_2,\iota_3\in\{+,-\}$ and any $v_1,v_2,v_3\in\mathbb{Z}^2_\ast$ with $v_1+v_2+v_3=0$.
\end{proposition}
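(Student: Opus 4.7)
The plan is to establish the measure estimate $|\mathcal{N}_{j,\kappa}^B|\lesssim_B 2^{-j}\sqrt{B}$ by a union bound over triples $(v_1,v_2,v_3)$ and sign choices, applying Lemma \ref{lem1} to each individual triple and exploiting the refined emptiness condition \eqref{ml13.1} in the lattice point count. Given $(v_1,v_2,v_3)$ with $v_1+v_2+v_3=0$, let $a\leq b\leq c$ denote the reordering of $|v_1|,|v_2|,|v_3|$; the triangle inequality $c\leq a+b$ is automatic. If the odd sign is not on the largest vector, or all three signs agree, then one term is dominated by the sum of two same-sign terms, so $|\Psi_y^{\iota_1,\iota_2,\iota_3}|\geq\Lambda_y(v)\geq 1$ for a nonzero $v\in\mathbb{Z}^2$, which already beats $K_\kappa\leq 1$. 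The only case that requires work is when the odd sign falls on the largest vector, in which case $\Psi_y=\pm F(y)$ with $F$ the function in \eqref{ml1}, and Lemma \ref{lem1}(ii) yields
\begin{equation*}
\big|\{y\in(0,B):|\Psi_y|<2^{-j}K_\kappa(v_1,v_2,v_3)\}\big|\leq 20\cdot 2^{-j}K_\kappa(v_1,v_2,v_3)\sqrt{a+B},
\end{equation*}
while the set is empty unless $c-b\lesssim_B a^{3/2}c^{-1/2}$.

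The main step is the lattice count. Fix dyadic $m\leq M$ with $a\approx m$, $c\approx M$. Pick $v$ with $|v|\approx m$ ($\lesssim m^2$ choices); for $w$ with $|w|\approx M$, the relation $|v_{\mathrm{med}}|^2-|w|^2=|v|^2+2v\cdot w$ (from $v+w+v_{\mathrm{med}}=0$) combined with $|v_{\mathrm{med}}|,|w|\approx M$ yields
\begin{equation*}
\big|\,|v_{\mathrm{med}}|-|w|\,\big|\approx \frac{|m^2+2v\cdot w|}{2M}.
\end{equation*}
Thus the condition $c-b\lesssim_B m^{3/2}M^{-1/2}$ forces $v\cdot w$ to lie in an interval of length $\lesssim_B m^{3/2}M^{1/2}$, which confines $w$ to the intersection of the annulus $\{|w|\approx M\}$ with a slab of half-width $\lesssim_B m^{1/2}M^{1/2}$ in the direction of $v$, giving $\lesssim_B m^{1/2}M^{3/2}$ lattice points. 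The total count per dyadic pair is therefore $\lesssim_B m^{5/2}M^{3/2}$, and summing the per-triple measure bound together with this count and the definition \eqref{ml12} of $K_\kappa$ gives
\begin{equation*}
|\mathcal{N}_{j,\kappa}^B|\lesssim_B 2^{-j}\sum_{m\leq M\text{ dyadic}}\frac{\sqrt{m}+\sqrt{B}}{m^{3/2}(\log M)^{1+\kappa}}\lesssim_B 2^{-j}\sqrt{B},
\end{equation*}
where the logarithmic factor is precisely what makes the $M$-sum converge.

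It follows that $\bigcap_{j\geq 5}\mathcal{N}_{j,\kappa}^B$ is a null set for each $B$, so the countable union $\mathcal{N}_\kappa$ is null. For $y\notin\mathcal{N}_\kappa$, choosing an integer $B>y$ produces some $j_0=j_0(y)$ with $y\notin\mathcal{N}_{j_0,\kappa}^B$, and $c_y:=2^{-j_0}$ verifies \eqref{ml17}. The main obstacle I expect is the slab count: without the refined emptiness condition \eqref{ml13.1}, a naive $\lesssim m^2M^2$ count would make the $M$-sum diverge even with the logarithmic gain, so the essential ingredient is that the near-resonance condition cuts the count of large vectors by a factor of $m^{3/2}M^{-1/2}$, trading a full power of $M$ for a favorable power of $m$.
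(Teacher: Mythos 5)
Your proposal is correct and takes essentially the same approach as the paper: reduce to the odd sign on the largest vector, apply Lemma \ref{lem1} to get both a per-triple interval-length bound and the emptiness/slab condition, dyadically decompose, and observe that the slab count cuts the naive $m^2M^2$ lattice count to $m^{5/2}M^{3/2}$, after which the $\langle v\rangle_{max}^{-3/2}$ factor in $K_\kappa$ cancels the $M^{3/2}$ and the logarithmic weight makes the remaining sum converge. The only cosmetic difference is that the paper fixes the smallest vector $\eta$ and counts the medium vector $\xi$ in the slab, while you fix the smallest vector $v$ and count the largest $w$; since fixing any two determines the third, the counts coincide.
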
 

\begin{proof}
The claim \eqref{ml17} is just  a consequence of the definitions. 
It suffices to prove that for any $B\in[1,\infty)\cap\mathbb{Z}$ the measure of the set $\cap_{j\geq 0}\mathcal{N}_{j,\kappa}^B$ is $0$. 
For this is suffices to prove that
\begin{equation}\label{ml20}
|\mathcal{N}_{j,\kappa}^B|\lesssim _{B,\kappa}2^{-j}\qquad\text{ for any }B,j\in[5,\infty)\cap\mathbb{Z}.
\end{equation} 

We examine the definition \eqref{ml15} of the sets $\mathcal{N}_{j,\kappa}^B$. 
Since $\Lambda_y(v)\geq 1$ for all $v\in\mathbb{Z}^2_\ast$ and $y>0$, the inequality $|\Psi_y^{\iota_1,\iota_2,\iota_3}(v_1,v_2,v_3)|\leq 1$ 
can hold only if not all the signs are equal and for certain configurations of the variable $v_1,v_2,v_3$. 
More precisely,
\begin{equation}\label{ml21}
\begin{split}
&\mathcal{N}_{j,\kappa}^B=\bigcup_{\xi,\eta\in\mathbb{Z}^2_\ast,\,|\eta|\leq|\xi|\leq|\xi+\eta|}\mathcal{N}_{j,\kappa}^B(\xi,\eta);\\
&\mathcal{N}_{j,\kappa}^B(\xi,\eta):=\big\{y\in(0,B):\,|\Lambda_y(\eta)+\Lambda_y(\xi)-\Lambda_y(\xi+\eta)|< 2^{-j}K_\kappa(\xi,\eta,-\xi-\eta)\big\}.
\end{split}
\end{equation}

We can use Lemma \ref{lem1} with $a:=|\eta|$, $b:=|\xi|$, $c:=|\xi+\eta|$ to analyze the sets $\mathcal{N}_{j,\kappa}^B(\xi,\eta)$.
These sets are either empty or they are open intervals of length $\lesssim 2^{-j}K_\kappa(\xi,\eta,-\xi-\eta)\sqrt{\langle \eta\rangle+B}$.
In view of Lemma \ref{lem1} (ii), for \eqref{ml20} it suffices to prove that
\begin{equation}\label{ml30}
\sum_{\xi,\eta\in\mathbb{Z}^2_\ast,\,|\eta|\leq|\xi|\leq|\xi+\eta|,\,|\xi+\eta|-|\xi|
\lesssim_B|\eta|^{3/2}|\xi|^{-1/2}}|\eta|^{1/2}K_\kappa(\xi,\eta,-\xi-\eta)\lesssim_{B,\kappa}1. 
\end{equation}

We divide the sum dyadically, over $|\eta|\approx 2^l$ and $|\xi|\approx 2^k$. The sum in the left-hand side of \eqref{ml30} is dominated by 
\begin{equation*}
C\sum_{l,k\geq 0,\,k\geq l-2}\sum_{|\eta|\in[2^{l-1},2^{l+1}],\,|\xi|\in [2^{k-1},2^{k+1}],\,\big||\xi+\eta|-|\xi|\big|
\lesssim_B 2^{3l/2}2^{-k/2}}|\eta|^{1/2}K_\kappa(\xi,\eta,-\xi-\eta).
\end{equation*}
The condition $\big||\xi+\eta|-|\xi|\big|\lesssim_B 2^{3l/2}2^{-k/2}$ implies that $|\xi\cdot\eta|\lesssim_B 2^{3l/2}2^{k/2}$.
Using also the definition \eqref{ml12} the sum above is bounded by
\begin{equation*}
C\sum_{l,k\geq 0,\,k\geq l-2}\sum_{|\eta|\in[2^{l-1},2^{l+1}],\,|\xi|\in [2^{k-1},2^{k+1}],\,|\xi\cdot\eta|
\lesssim_B 2^{3l/2}2^{k/2}}2^{l/2}\frac{1}{2^{3k/2}(1+k)^{1+\kappa}2^{4l}}.
\end{equation*}
For $l,k,\eta$ fixed the sum over $\xi$ is bounded by $C_B(1+k)^{-1-\kappa}2^{-3l}$,
so the entire sum is bounded by $C_{B,\kappa}$, as claimed in \eqref{ml30}.
\end{proof}

\subsubsection{4-way resonances}\label{SecSD2}
In this section we control four-way interactions, or iterated resonances.
This step is needed to get a better power of $\varep$ in \eqref{mai2},
as it allows us to perform iterated (partial) normal forms in certain ranges of frequencies. 
Roughly speaking, the main conclusion of this subsection, Proposition \ref{prop2}, can be interpreted as follows:
if $\xi \neq \eta \in \Z^2$, $|\xi-\eta|\leq \min(|\xi|,|\eta|)$, and $|\Phi_{g,\sigma}(\xi,\eta)|$ is sufficiently small, where  
$\Phi_{g,\sigma}(\xi,\eta)$ is the quadratic phase defined in \eqref{moddef}, then the cubic phase
\begin{align}
\Psi_{g,\sigma}(\xi,\eta,\rho) =  \Lambda_{g,\sigma}(\xi) \pm \Lambda_{g,\sigma} (\xi-\eta) \pm \Lambda_{g,\sigma}(\eta-\rho) \pm \Lambda_{g,\sigma}(\rho)
\end{align}
satisfies a good lower bound of the form
\begin{align}
\label{SDbound02}
|\Psi_{g,\sigma}(\xi,\eta,\rho)|  \gtrsim_{g,\sigma} |\xi|^{-1/2} (|\xi-\eta|+|\eta-\rho|)^{-2},
\end{align}
for all $\eta, \xi, \rho \in \Z^2$ with $\xi\neq \rho$, $|\xi-\eta|^{16}+|\eta-\rho|^{16} \ll\min(|\xi|,|\eta|,|\rho|)$, provided that $g/\sigma$ is outside a set of measure zero. To define this set precisely, we begin with a lemma: 

\begin{lemma}\label{quart1}
There is a set $\mathcal{R}\subseteq (0,\infty)$ of measure $0$ with the property that for any $y\notin\mathcal{R}$ there is $b_y>0$ such that
\begin{equation}\label{ml50}
\Big|\frac{\Lambda_y(\xi)}{|\xi|}-\frac{\Lambda_y(\eta)}{|\eta|}\Big|\geq b_y|\xi|^{-6}
\end{equation}
for any $\xi,\eta\in\Z^2_\ast$ satisfying $\eta=\lambda\xi$, $\lambda\in(0,1)$.
\end{lemma}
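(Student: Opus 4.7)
The plan is to convert \eqref{ml50} into a Diophantine-type avoidance condition on $y$, then verify the exceptional set has measure zero via a Borel--Cantelli-type estimate.

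First I would parametrize the admissible pairs. Writing $\xi=c\xi_0$ with $\xi_0\in\mathbb{Z}^2$ primitive (coordinates coprime) and $c\in\mathbb{Z}_{\geq 1}$, the constraint $\eta=\lambda\xi\in\mathbb{Z}^2_\ast$ with $\lambda\in(0,1)$ forces $\eta=j\xi_0$ with $j\in\{1,\dots,c-1\}$, and in particular $c\geq 2$. Setting $s:=|\xi_0|^2\in\mathbb{Z}_{\geq 1}$ gives $|\xi|=c\sqrt{s}$, $|\eta|=j\sqrt{s}$, and the crucial algebraic identity
\[
|\xi|\,|\eta|\;=\;c\,j\,s\;\in\;\mathbb{Z}_{\geq 1}.
\]
The integrality of $|\xi||\eta|$ is the algebraic heart of the argument: the lower bound on the difference will come from keeping $y$ away from the positive integers $cjs$.

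Next, since $(\Lambda_y(v)/|v|)^2 = y/|v|+|v|$, rationalizing the difference in \eqref{ml50} yields
\[
\frac{\Lambda_y(\xi)}{|\xi|}-\frac{\Lambda_y(\eta)}{|\eta|}
\;=\;\frac{(|\xi|-|\eta|)(|\xi||\eta|-y)}{|\xi||\eta|\bigl(\Lambda_y(\xi)/|\xi|+\Lambda_y(\eta)/|\eta|\bigr)}
\;=\;\frac{(c-j)\sqrt{s}\,(cjs-y)}{cjs\bigl(\Lambda_y(\xi)/|\xi|+\Lambda_y(\eta)/|\eta|\bigr)}.
\]
Restricting $y$ to a bounded interval $(0,B)$, the trivial estimate $\Lambda_y(v)/|v|\lesssim_B |v|^{1/2}$ bounds the denominator factor by $\lesssim_B c^{1/2}s^{1/4}$. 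Since $|\xi|^{-6}=c^{-6}s^{-3}$, the lemma then reduces to producing, for a.e.\ $y$, a constant $b_y>0$ such that
\[
(c-j)\,|cjs-y|\;\geq\; b_y\,c^{-9/2}\,j\,s^{-9/4}\qquad\text{for every admissible triple }(c,j,s).
\]

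The final step is a measure count. For each fixed $(c,j,s)$, the set of $y\in(0,B)$ violating the inequality at level $b$ is an interval of length $\leq 2b\,c^{-9/2}\,j\,s^{-9/4}/(c-j)$ around the integer $cjs$. Summing,
\[
\sum_{s\geq 1}s^{-9/4}\sum_{c\geq 2}c^{-9/2}\sum_{j=1}^{c-1}\frac{j}{c-j}
\;\lesssim\;\sum_{s}s^{-9/4}\sum_{c}c^{-7/2}\log c\;<\;\infty,
\]
using the elementary identity $\sum_{j=1}^{c-1}j/(c-j)=cH_{c-1}-(c-1)\lesssim c\log c$. The total bad measure in $(0,B)$ is therefore $\lesssim b$. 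Letting $b=1/n\to 0$ and taking a countable union over $B\in\mathbb{N}$ produces the null set $\mathcal{R}$; for any $y\notin\mathcal{R}$ one extracts $b_y$ from the first $n$ for which $y$ avoids the corresponding level-$(1/n)$ bad set.

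The main obstacle I anticipate is the tightness of the exponent $6$ in \eqref{ml50}: the powers $9/2$ (in $c$) and $9/4$ (in $s$) dropping out of the rationalization just barely beat the multiplicity $\sim c\log c$ of integer representations $N=cjs$, so the measure series converges with essentially no slack. Any attempt to sharpen the right-hand side of \eqref{ml50} would require a finer count of the representations of integers in the form $cjs$ subject to $c^2 s$ being prescribed, which lies beyond the purely elementary argument outlined above.
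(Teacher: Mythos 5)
Your argument is correct and follows essentially the paper's own route: rationalize the difference to express it as a fraction with numerator $(|\xi|-|\eta|)(|\xi||\eta|-y)$, reduce the lemma to keeping $y$ a quantitative distance from the products $|\xi||\eta|$, and sum the measures of the resulting bad intervals over lattice pairs. Your parametrization by $(c,j,s)$ with the factor $c-j$ retained is in fact a slightly sharper count than the paper's crude dyadic one (which simply bounds the number of pairs at scale $2^k$ by $2^{3k}$ and drops $\big||\xi|-|\eta|\big|\geq 1$), and this is exactly why your closing remark about tightness is mistaken: your own computation converges for any exponent $\alpha>7/2$ in place of $6$ (the sums $\sum_c c^{5/2-\alpha}\log c$ and $\sum_s s^{3/4-\alpha/2}$ both converge once $\alpha>7/2$), so the exponent $6$ in \eqref{ml50} has ample slack at this stage and is dictated by how the lemma is invoked inside the proof of Proposition \ref{prop2}, not by the measure estimate itself. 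A second small correction: the integrality of $|\xi||\eta|=cjs$ is a pleasant observation but plays no role — the measure count is indifferent to whether the interval centers are integers, and the paper's proof never invokes it; the actual ``heart'' is the rationalization combined with the convergent lattice count, both of which you carry out correctly.
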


\begin{proof} For $B,j\in[5,\infty)\cap \Z$ we define the sets
\begin{equation}\label{ml51}
\begin{split}
\mathcal{R}_j^B:=\Big\{y\in(0,B):\,\Big|&\frac{\Lambda_y(\xi)}{|\xi|}-\frac{\Lambda_y(\eta)}{|\eta|}\Big|<2^{-j}|\xi|^{-6}\\
&\text{ for some }\xi,\eta\in\Z^2_\ast\text{ with }\eta=\lambda\xi,\,\lambda\in(0,1)\Big\}.
\end{split}
\end{equation}
As before, let
\begin{equation}\label{ml52}
\mathcal{R}:=\bigcup_{B\geq 5}\big(\bigcap_{j\geq 5}\mathcal{R}_j^B\big).
\end{equation}

The inequality \eqref{ml50} is clearly satisfied for $y\in(0,\infty)\setminus\mathcal{R}$, so we only need to prove that the measure of the set $\mathcal{R}$ is equal to $0$. For this is suffices to prove that
\begin{equation}\label{ml53}
|\mathcal{R}_{j}^B|\lesssim _{B}2^{-j}\qquad\text{ for any }B,j\in[5,\infty)\cap\mathbb{Z}.
\end{equation} 

Assume $y\in \mathcal{R}_{j}^B$ and assume $\xi,\eta$ are points in $\Z^2_\ast$, $\eta=\lambda\xi$, $\lambda\in(0,1)$, such that the inequality in \eqref{ml51} is satisfied. In particular
\begin{equation*}
\Big|\frac{y|\xi|+|\xi|^3}{|\xi|^2}-\frac{y|\eta|+|\eta|^3}{|\eta|^2}\Big|\lesssim_B2^{-j}|\xi|^{-5.5}.
\end{equation*}
After algebraic simplifications, this shows that
\begin{equation*}
\big|(|\eta|-|\xi|)(y-|\xi||\eta|)\big|\lesssim_B2^{-j}|\xi|^{-3.5}.
\end{equation*}
Since $\xi,\eta$ have integer coordinates and $\eta=\lambda \xi$, $\lambda\in (0,1)$, we have $\big||\eta|-|\xi|\big|\gtrsim 1$. In particular, the inequality above shows that
\begin{equation}\label{ml54}
\big|y-|\xi||\eta|\big|\lesssim_B2^{-j}|\xi|^{-3.5}.
\end{equation}
Therefore $y$ belongs to a union over $k\geq 0$ and $\xi,\eta\in\Z^2_\ast$ of open intervals of lengths $\lesssim_B2^{-j}2^{-3.5k}$ centered around numbers of the form $|\xi||\eta|$, where $|\xi|\approx 2^k$ and $\eta=\lambda\xi$, $\lambda\in (0,1)$. For every $k\geq 0$ fixed, $\xi$ can have up to $C2^{2k}$ possible locations, and for every such location there are no more than $C2^k$ possible locations for $\eta$ along the line from $0$ to $\xi$. The desired conclusion \eqref{ml53} follows.
\end{proof}

We are now ready to prove our main result in this subsection.

\begin{proposition}\label{prop2} 
Assume that $\mathcal{R}\subseteq (0,\infty)$ is defined as in \eqref{ml51}--\eqref{ml52} and $y\in(0,\infty)\setminus \mathcal{R}$.
Then there is a small constant $b'_y>0$ such that
\begin{equation}\label{ml42}
|\Lambda_y(v+\xi)-\Lambda_y(v)-\iota_1\Lambda_y(\xi)|+|\Lambda_y(v+\eta)-\Lambda_y(v)-\iota_2\Lambda_y(\eta)|
  \geq b'_y\langle v\rangle^{-1/2}(\langle\xi\rangle+\langle\eta\rangle)^{-2}
\end{equation}
for any $\iota_1,\iota_2\in\{+,-\}$ and any $v,\xi,\eta\in\Z^2_\ast$ with $(\xi,\iota_1)\neq(\eta,\iota_2)$ and $(|\xi|+|\eta|)^{16}\leq b'_y|v|$.
\end{proposition}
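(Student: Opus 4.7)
The plan is a proof by contradiction. Assume
\[
S := |A(v,\xi,\iota_1)| + |A(v,\eta,\iota_2)| < b'_y\langle v\rangle^{-1/2}(\langle\xi\rangle+\langle\eta\rangle)^{-2},
\]
where $A(v,\xi,\iota) := \Lambda_y(v+\xi) - \Lambda_y(v) - \iota\Lambda_y(\xi)$, and derive a contradiction once $b'_y$ is chosen sufficiently small in terms of the constant $b_y$ from Lemma \ref{quart1}. The starting point is a Taylor expansion: differentiating $\Lambda_y(u)=\sqrt{y|u|+|u|^3}$ gives $|\nabla^2\Lambda_y(u)|\lesssim_y\langle u\rangle^{-1/2}$, so
\[
A(v,\xi,\iota) = p\cdot\xi - \iota\Lambda_y(\xi) + O_y(|\xi|^2|v|^{-1/2}),\qquad p:=\nabla\Lambda_y(v),\; |p|\asymp_y|v|^{1/2},
\]
and the contradiction hypothesis yields the approximate linear constraints $p\cdot\xi=\iota_1\Lambda_y(\xi)+\mathcal{E}_\xi$, $p\cdot\eta=\iota_2\Lambda_y(\eta)+\mathcal{E}_\eta$ with $|\mathcal{E}_\xi|+|\mathcal{E}_\eta|\lesssim_y S + (|\xi|+|\eta|)^2|v|^{-1/2}$.

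\emph{Non-parallel case} ($\xi\wedge\eta\neq 0$). Since $\xi,\eta\in\Z^2_\ast$, $|\xi\wedge\eta|\geq 1$. The two-dimensional identity $(p\cdot\xi)\eta - (p\cdot\eta)\xi = (\xi\wedge\eta)p^\perp$, combined with $|p|\asymp|v|^{1/2}$, $\Lambda_y(\xi)\lesssim|\xi|^{3/2}$, and the hypothesis $(|\xi|+|\eta|)^{16}\leq b'_y|v|$, gives
\[
|\xi\wedge\eta|\lesssim_y \frac{(|\xi|+|\eta|)^{5/2}+(|\mathcal{E}_\xi|+|\mathcal{E}_\eta|)(|\xi|+|\eta|)}{|v|^{1/2}}\lesssim_y (b'_y)^{1/2},
\]
contradicting $|\xi\wedge\eta|\geq 1$ once $b'_y$ is small.

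\emph{Parallel case} ($\xi\wedge\eta = 0$). Assume WLOG $|\eta|\leq|\xi|$ and write $\eta=\epsilon s\xi$ with $\epsilon\in\{\pm 1\}$, $s\in\mathbb{Q}\cap(0,1]$. If $s=1$ (so $\eta=\pm\xi$), the hypothesis $(\xi,\iota_1)\neq(\eta,\iota_2)$ combined with a direct evaluation of $A\pm A'$ shows that $|A|+|A'|$ is bounded below either by $\gtrsim\Lambda_y(\xi)\geq 1$ (dominated by a linear term) or by $\gtrsim_y|\xi|^2|v|^{-1/2}$ (from strict positivity of $\nabla^2\Lambda_y(v)$ for $|v|$ large), both well above the target. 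If $s\in(0,1)$, substituting $p\cdot\eta=\epsilon s(p\cdot\xi)$ into the two constraints gives
\[
\iota_2\Lambda_y(\eta) - \epsilon s\iota_1\Lambda_y(\xi) = O_y(S + |\xi|^2|v|^{-1/2}).
\]
If $\iota_2 = -\epsilon\iota_1$ the left side has absolute value $\geq\Lambda_y(\eta)\geq 1$ while the right is $\ll 1$: contradiction. Otherwise $\iota_2 = \epsilon\iota_1$, and dividing by $|\eta|=s|\xi|$, and using that $\Lambda_y$ is even to replace $\eta$ by $-\eta\in\Z^2_\ast$ when $\epsilon=-1$ so as to apply Lemma \ref{quart1}, yields
\[
\Big|\frac{\Lambda_y(\xi)}{|\xi|}-\frac{\Lambda_y(\eta)}{|\eta|}\Big|\lesssim_y \frac{S+|\xi|^2|v|^{-1/2}}{|\eta|}.
\]
Lemma \ref{quart1} (applicable since $y\notin\mathcal{R}$) bounds the left-hand side below by $b_y|\xi|^{-6}$, while the hypothesis forces $|\xi|^2|v|^{-1/2}\leq (b'_y)^{1/2}|\xi|^{-6}$. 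Choosing $b'_y\ll b_y^2$ makes the Taylor correction too small to account for the Lemma \ref{quart1} gain, producing the desired contradiction.

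The main obstacle is the very last subcase, where we must pit the $|\xi|^{-6}$ gain from Lemma \ref{quart1} against the Taylor correction $|\xi|^2|v|^{-1/2}$. The exponent $16$ in the hypothesis $(|\xi|+|\eta|)^{16}\leq b'_y|v|$ is precisely what ensures the margin $|v|^{1/2}\gtrsim|\xi|^8$ needed for Lemma \ref{quart1} to beat the error; any weaker polynomial hypothesis would break the argument. The non-parallel case is by contrast a short geometric-rigidity argument in the integer lattice.
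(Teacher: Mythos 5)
Your proof is correct and follows essentially the same route as the paper's: Taylor-expand the two modulations around $v$, use the integer-lattice rigidity of the wedge/determinant to reduce to the parallel case, invoke Lemma~\ref{quart1} when $\eta$ and $\xi$ are parallel with $|\eta|<|\xi|$, and handle $\eta=\pm\xi$ by direct evaluation (with the convexity of $\Lambda_y$ supplying the $|\xi|^2|v|^{-1/2}$ lower bound in the genuinely near-resonant subcase). The only cosmetic difference is that the paper expands $\Lambda_y(v+\mu)=P_y(|v+\mu|^2)$ in the scalar variable $|v+\mu|^2$, whereas you expand directly in $\mu$; both give the same leading term and error size.
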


\begin{proof} 
The main point is to prove that at least one of the two modulations
 $|\Lambda_y(v+\xi)-\Lambda_y(v)-\iota_1\Lambda_y(\xi)|$ or $|\Lambda_y(v+\eta)-\Lambda_y(v)-\iota_2\Lambda_y(\eta)|$ is bounded from below, 
essentially, by the large frequency to the power $-1/2$. 
This is better than the power $-3/2$ obtained in Proposition \ref{prop1}, which is the optimal power in the case when one considers only one modulation.

Assume, for contradiction, that the conclusion of the lemma fails. 
Thus for any constant $\delta_y>0$ there are signs $\iota_1,\iota_2\in\{+,-\}$ and lattice points $v,\xi,\eta\in\Z^2_\ast$ 
satisfying $|\xi|+|\eta|\leq R$, $(\xi,\iota_1)\neq(\eta,\iota_2)$ and $|v|\geq \delta_y^{-1}R^{16}$ such that
\begin{equation}\label{ml43}
|\Lambda_y(v+\xi)-\Lambda_y(v)-\iota_1\Lambda_y(\xi)|+|\Lambda_y(v+\eta)-\Lambda_y(v)-\iota_2\Lambda_y(\eta)|\leq \delta_y|v|^{-1/2}R^{-2}.
\end{equation}

\medskip
{\bf{Step 1.}} We show first that the vectors $0$, $\xi$, and $\eta$ are aligned, 
i.e. $\xi_1\eta_2=\xi_2\eta_1$, where $\xi=(\xi_1,\xi_2)$ and $\eta=(\eta_1,\eta_2)$. For this we notice that
\begin{equation*}
\Lambda_y(w)=P_y(|w|^2)\qquad\text{ where }\qquad P_y(\rho):=\big(y\rho^{1/2}+\rho^{3/2}\big)^{1/2}.
\end{equation*}
Notice that $P_y(\rho)=\rho^{3/4}+O_y(\rho^{-1/4})$, thus
\begin{equation*}
|\partial_\rho P_y(\rho)-(3/4)\rho^{-1/4}|+|\partial_\rho^2 P_y(\rho)|\lesssim_y\rho^{-5/4}
\end{equation*}
for $\rho\in[1,\infty)$. In particular, for $\xi,\eta,v$ as in \eqref{ml43} and $\mu\in\{\xi,\eta\}$, we have
\begin{equation}\label{ml44}
\Big|\Lambda_y(v+\mu)-\Lambda_y(v)-(|v+\mu|^2-|v|^2)(3/4)|v|^{-1/2}\Big|\lesssim_y(|v+\mu|^2-|v|^2)^2|v|^{-5/2}.
\end{equation}
In particular, using \eqref{ml43}, it follows that $\big||v+\mu|^2-|v|^2\big|\lesssim_y R^{3/2}|v|^{1/2}$ and
\begin{equation*}
\Big|\frac{3}{4}(|v+\mu|^2-|v|^2)|v|^{-1/2}-\iota_\mu\Lambda_y(\mu)\Big|\leq 2\delta_y|v|^{-1/2}R^{-2}
\end{equation*}
for $\mu\in\{\xi,\eta\}$, where $\iota_\xi=\iota_1$ and $\iota_\eta = \iota_2$. Therefore
\begin{equation}\label{ml45}
\Big|\frac{3}{2}(v\cdot\mu)|v|^{-1/2}-\iota_\mu\Lambda_y(\mu)
  + \frac{3}{4}|\mu|^2|v|^{-1/2}\Big|\leq 2\delta_y|v|^{-1/2}R^{-2}.
\end{equation}

Letting $v=|v|e$, where $e=(e_1,e_2)\in\mathbb{S}^1$ is a unit vector in $\R^2$, 
it follows from \eqref{ml45} that $|e\cdot \mu|\lesssim_y R^{3/2}|v|^{-1/2}$ for $\mu\in\{\xi,\eta\}$. 
This shows that $\xi_1\eta_2=\xi_2\eta_1$. Indeed, otherwise $|\xi_1\eta_2-\xi_2\eta_1|\geq 1$ 
and we could solve the system (in $e_1$ and $e_2$)
\begin{equation*}
e_1\xi_1+e_2\xi_2=a,\qquad e_1\eta_1+e_2\eta_2=b.
\end{equation*}
Since $|a|+|b|\lesssim_y R^{3/2}|v|^{-1/2}$ it would follow that $|e_1|+|e_2|\lesssim_y R^{5/2}|v|^{-1/2}$, 
in contradiction with the assumptions $|e|=1$ and $|v|\geq\delta_y^{-1}R^{16}$.

To summarize, we showed that $\eta=\lambda\xi$ for some $\lambda\in\mathbb{R}$ 
and that the inequalities \eqref{ml45} hold for $\mu\in\{\xi,\eta\}$.

\medskip
{\bf{Step 2.}} We show now that $\eta=\pm\xi$. Indeed, using \eqref{ml45} 
and keeping only the first two terms we have
\begin{equation*}
\Big|\frac{3}{2}(v\cdot\mu)|v|^{-1/2}-\iota_\mu\Lambda_y(\mu)\Big|\lesssim_y|v|^{-1/2}R^{2},
\end{equation*}
for $\mu\in\{\xi,\eta\}$. In particular
\begin{equation*}
\Big|\frac{3}{2}|v\cdot\mu||v|^{-1/2}-|\Lambda_y(\mu)|\Big|\lesssim_y|v|^{-1/2}R^{2}.
\end{equation*}
It follows that
\begin{equation}\label{ml47}
\Big|\frac{\Lambda_y(\xi)}{|\xi|}-\frac{3|v\cdot\xi|}{2|\xi|}|v|^{-1/2}\Big|+\Big|\frac{\Lambda_y(\eta)}{|\eta|}-\frac{3|v\cdot\eta|}{2|\eta|}|v|^{-1/2}\Big|
  \lesssim_y|v|^{-1/2}R^{2}.
\end{equation}
Since $\eta=\lambda\xi$ for some $\lambda\in\mathbb{R}$ we have $|v\cdot\xi|/|\xi|=|v\cdot\eta|/|\eta|$. 
We apply now \eqref{ml50}. Since $|v|^{-1/2}\leq \delta_y^{1/2}R^{-8}$, 
the inequalities \eqref{ml47} can only be satisfied if $|\eta|=|\xi|$, as claimed.

\medskip
{\bf{Step 3.}} 
Finally, we show that $(\xi,\iota_\xi)=(\eta,\iota_\eta)$, in contradiction with the original assumptions. 
Indeed, using again \eqref{ml45} and keeping only the first two terms we have
\begin{equation*}
\Big|\frac{3}{2}(\iota_{\mu}\mu\cdot v)|v|^{-1/2}-\Lambda_y(\mu)\Big|\lesssim_y|v|^{-1/2}R^{2},
\end{equation*}
for $\mu\in\{\xi,\eta\}$. Since $\eta=\pm\xi$, this implies that $\iota_{\xi}\xi=\iota_\eta\eta$ 
(otherwise one would have $\iota_{\xi}\xi=-\iota_\eta\eta$, and the inequalities 
above would give $|\Lambda_y(\xi)|\lesssim_y|v|^{-1/2}R^{2}$, in contradiction with the original assumptions). 
Moreover, if $\eta=-\xi$ and $\iota_\eta=-\iota_\xi$ then we use again 
the inequalities \eqref{ml45}, this time with all the terms. It follows that $|\xi|^2|v|^{-1/2}\lesssim \delta_y|v|^{-1/2}R^{-2}$, 
in contradiction with the original assumptions. 
The remaining case $\eta=\xi$ and $\iota_\eta=\iota_\xi$ was excluded in the original assumptions, which gives the contradiction. 
This completes the proof of the proposition.
\end{proof}

\subsection{Elements of paradifferential calculus}\label{SecParaCalc} 
Our proof of the main theorem is based on establishing suitable energy estimates for solutions of the system \eqref{WWE}. 
For this we use a paradifferential formulation. In this subsection we summarize the main definitions and properties of the Weyl paradifferential calculus.

We fix $\varphi: \R \to [0,1]$ an even smooth function supported in $[-8/5,8/5]$ and equal to $1$ in $[-5/4,5/4]$. 
For simplicity of notation, we also let $\varphi: \R^2 \to [0,1]$ denote the corresponding radial function on $\R^2$. 
Let
\begin{equation}
\label{LP}
\begin{split}
& \varphi_k(x):=\varphi(|x|/2^k)-\varphi(|x|/2^{k-1})\,\,\text{ for any }\,\,k\in\mathbb{Z},
\\
& \varphi_I:=\sum_{m\in I\cap\mathbb{Z}}\varphi_m\text{ for any }I\subseteq\mathbb{R},
\\
& \varphi_{\leq B}(x):=\varphi(x/2^B),\qquad \varphi_{>B}:=1-\varphi_{\leq B}\,\,\text{ for any }\,\,B\in\mathbb{R}.
\end{split}
\end{equation}
We denote by $P_k$, $k \in \Z$, the Littlewood--Paley projection operators defined by the Fourier multipliers $\xi \to \varphi_k(\xi)$. 
Notice that, in our periodic setting, $P_k\equiv 0$ if $k\leq -1$.  
We let $P_{\leq B}$, respectively $P_{>B}$, denote the operators defined by the Fourier  
multipliers $\xi\to \varphi_{\leq B}(\xi)$, respectively $\xi \to \varphi_{>B}(\xi)$. 
Finally, we let $P_{-\infty}$ denote the average operator on $\T^2$, $P_{-\infty}f(x):=(2\pi)^{-2}\int_{\T^2}f(y)\,dy$.

We recall now the definition of paradifferential operators on $\T^2$ (Weyl formulation):
given a symbol $a = a(x,\zeta): \T^2 \times \R^2 \to \mathbb{C}$, we define the operator $T_a$ by
\begin{equation}
\label{Taf}
\begin{split}
\mathcal{F} \big( T_a f \big)(\xi) := \frac{1}{4\pi^2} 
  \sum_{\eta\in\Z^2} \chi \Big(\frac{|\xi-\eta|}{|\xi+\eta|} \Big) \widetilde{a}(\xi-\eta,(\xi+\eta)/2) \widehat{f}(\eta),
\end{split}
\end{equation}
where $\widetilde{a}$ denotes the partial Fourier transform of $a$ in the first coordinate and $\chi=\varphi_{\leq -20}$.  

By convention, if $\xi=0$ then $\chi\big(\frac{|\xi-\eta|}{|\xi+\eta|} \big)\equiv 0$ for all $\eta\in\Z^2$. 
In particular, $T_ag\equiv 0$ if $g$ is a constant function. 
Moreover, for any $f$, the function $T_af$ depends only on the values of the symbol $a(x,\zeta)$ for $|\zeta|\geq 1$. 
Thus we may assume that the symbols $a$ are defined only for $|\zeta|>1/2$, and write $a=a'$ if $a(x,\zeta)=a'(x,\zeta)$ for $x\in\T^2$ and $|\zeta|>1/2$.

To estimate symbols we will use the spaces $\mathcal{M}^l_r$ defined by the norms
\begin{equation}
\label{symclass}
{\| a \|}_{\mathcal{M}^{l}_{r}} := \sup_{|\alpha| + |\beta| \leq r} \sup_{|\zeta|>1/2} \,
    \langle \zeta\rangle^{-l} {\|\, \langle\zeta\rangle^{|\beta|} \partial^\beta_\zeta \partial_x^\alpha a(x,\zeta) \|}_{L^2_x}.
\end{equation}
The number $l\in\mathbb{R}$ indicates the order of the operator while $r\in[4,\infty)\cap\mathbb{Z}$ measures its (less important) differentiability. 
For simplicity, in the periodic case considered here we measure all functions and symbols in $L^2_x$ based spaces.

Given two symbols $a$ and $b$, recall the definition of their Poisson bracket,
\begin{align}
\label{poisson}
\{ a,b \} := \nabla_x a \nabla_\zeta b - \nabla_\zeta a \nabla_x b.
\end{align}

We record now several simple properties of paradifferential operators, which follow mostly from definitions. See also  \cite[Appendix A]{DIPP} for similar proofs in the Euclidean case. 

\begin{lemma}[Basic Properties]\label{PropSym} 
(i) For any $r\geq 6$ and $l_1,l_2\in\R$ we have
\begin{equation}
\label{propsymb1}
{\| a b \|}_{\mathcal{M}_r^{l_1+l_2}} + {\| \{a,b\} \|}_{\mathcal{M}_{r-2}^{l_1+l_2-1}}  \lesssim {\| a \|}_{ \mathcal{M}_r^{l_1}} {\| b \|}_{\mathcal{M}_r^{l_2}}.
\end{equation}

(ii) If $a\in\mathcal{M}^l_6$ is a symbol then
\begin{equation}\label{LqBdTa}
\begin{split}
{\| T_a f \|}_{H^{m-l}} & \lesssim {\| a \|}_{\mathcal{M}_{6}^l} {\| f \|}_{H^m}.
\end{split}
\end{equation}

(iii) Given symbols $a\in\mathcal{M}^{l_1}_{12}$, $b\in\mathcal{M}^{l_2}_{12}$, $|l_1|,|l_2|\leq 10$, we have
\begin{equation}
\label{lemComp1}
T_a T_b = T_{ab} + \frac{i}{2}T_{\{a,b\}} + E(a,b)
\end{equation}
where $E$ is an error term such that 
\begin{equation}
\label{RemBounds}
{\| E(a,b)f\|}_{H^{m-l_1-l_2+2}}\lesssim {\| a \|}_{\mathcal{M}^{l_1}_{12}} {\| b \|}_{\mathcal{M}^{l_2}_{12}}
  {\| f \|}_{H^m}.
\end{equation}
Moreover, if $a,b\in H^{12}$ are functions then $T_aT_b-T_{ab}$ is a smoothing operator
\begin{equation}
\label{RemBounds2}
{\|(T_aT_b-T_{ab})f\|}_{H^{m+6}}\lesssim {\| a \|}_{H^{12}} {\| b \|}_{H^{12}}
  {\| f \|}_{H^m}.
\end{equation}

(iv) If $a\in \mathcal{M}_{6}^0$ is real-valued then $T_a$ is a bounded self-adjoint operator on $L^2$. Moreover, if $a\in \mathcal{M}_{6}^0$ and $f\in L^2$ then
\begin{equation}\label{Alu2}
\overline{T_af}=T_{a'}\overline{f},\quad\text{ where }\quad a'(y,\zeta):=\overline{a(y,-\zeta)}.
\end{equation} 
\end{lemma}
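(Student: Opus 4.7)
The plan is to prove the four parts in order by direct manipulation of the Fourier-side definition \eqref{Taf}. None of these are deep statements; the Weyl quantization is designed precisely so that these algebraic and analytic bounds fall out from the symbol definition and Sobolev embedding on $\T^2$, and the only real work is tracking the number of derivatives.

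For part (i), I would just apply Leibniz to $\partial^\beta_\zeta\partial^\alpha_x(ab)$, distribute the $|\alpha|\le r$ spatial derivatives and $|\beta|\le r$ frequency derivatives between the two factors, and estimate each term in $L^2_x$ by Hölder, writing one factor in $L^2$ and the other in $L^\infty$. Using the Sobolev embedding $H^2(\T^2)\hookrightarrow L^\infty$ absorbs two extra derivatives on the $L^\infty$ factor, and since $r\geq 6$ one always retains enough derivatives on the $L^2$ factor to land in $\mathcal{M}^{l_1}_r$ or $\mathcal{M}^{l_2}_r$; the $\langle\zeta\rangle$-weights add to $l_1+l_2$ as stated. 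The Poisson bracket bound is exactly the same argument applied to $\nabla_xa\cdot\nabla_\zeta b-\nabla_\zeta a\cdot\nabla_xb$, which costs one $x$-derivative and one $\zeta$-derivative on each factor, producing the order $l_1+l_2-1$ and the loss $r\to r-2$.

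For part (ii), my plan is to apply Schur's test directly to the Fourier kernel $K(\xi,\eta)=\chi(|\xi-\eta|/|\xi+\eta|)\,\widetilde a(\xi-\eta,(\xi+\eta)/2)$. The support condition of $\chi$ forces $|\xi|\sim|\eta|\sim|(\xi+\eta)/2|$, so on the support of $K$ the weight $\langle(\xi+\eta)/2\rangle^l$ is comparable to $\langle\xi\rangle^l$ and $\langle\eta\rangle^l$, which accounts for the loss of $l$ derivatives. To control the convolution in $\xi-\eta$, I would factor $\widetilde a(\xi-\eta,\zeta)=(1+|\xi-\eta|^2)^{-3}\widetilde{(1-\Delta_x)^3 a}(\xi-\eta,\zeta)$, use the uniform-in-$\zeta$ bound $\|(1-\Delta_x)^3 a(\cdot,\zeta)\|_{L^2_x}\lesssim\langle\zeta\rangle^l\|a\|_{\mathcal{M}^l_6}$, and then apply Cauchy--Schwarz in $\xi-\eta$, whose $\ell^2$-norm is summable because of the $(1+|\xi-\eta|^2)^{-3}$ decay. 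This is why $r=6$ is enough.

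For part (iii), this is the standard Weyl composition expansion. I would compute the symbol of $T_aT_b$ from \eqref{Taf} as a double sum in an intermediate frequency $\sigma$, parametrize $\sigma=(\xi+\eta)/2+\tau$, and Taylor-expand both midpoints $((\xi+\sigma)/2)$ and $((\sigma+\eta)/2)$ around $(\xi+\eta)/2$. The zeroth-order term reconstructs $T_{ab}$, the first-order term antisymmetrizes (the Weyl point being that the symmetric piece cancels) into $(i/2)T_{\{a,b\}}$, and the second-order Taylor remainder is a symbol of order $l_1+l_2-2$ whose $L^2\to L^2$ bound---i.e.\ the bound \eqref{RemBounds} at level $H^m\to H^{m-l_1-l_2+2}$---follows from part (ii) applied to the remainder symbol, which needs up to two extra $x$- and $\zeta$-derivatives on each of $a,b$; this is why $r=12$ is the natural requirement. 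The refinement \eqref{RemBounds2} in the purely multiplicative case $a,b$ independent of $\zeta$ comes from the same expansion: now $\{a,b\}\equiv 0$, so only the second-order error survives, and it is readily seen to gain six derivatives when $a,b\in H^{12}$.

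For part (iv), self-adjointness and the conjugation identity \eqref{Alu2} are visible directly from \eqref{Taf}. When $a$ is real, $\overline{\widetilde a(\xi-\eta,(\xi+\eta)/2)}=\widetilde a(\eta-\xi,(\xi+\eta)/2)$, and evenness of $\chi$ makes the kernel satisfy $\overline{K(\xi,\eta)}=K(\eta,\xi)$, which is exactly the self-adjointness condition by Plancherel. For \eqref{Alu2}, I would just take the complex conjugate of \eqref{Taf}, change variable $\eta\mapsto-\eta$ on the Fourier side, and recognize the result as $T_{a'}\overline f$ with $a'(y,\zeta)=\overline{a(y,-\zeta)}$. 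The main (mild) obstacle in the whole lemma is the bookkeeping in part (iii) to see that precisely two extra derivatives are gained in the remainder and that $r=12$ is enough to close the estimate; everything else is Leibniz, Schur, and Sobolev embedding on $\T^2$.
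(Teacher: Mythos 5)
The paper does not actually prove Lemma \ref{PropSym}: it records the statements as ``simple properties of paradifferential operators, which follow mostly from definitions'' and refers the reader to \cite[Appendix A]{DIPP} for similar proofs in the Euclidean case. So there is no paper proof to compare against; I'll assess your plan on its own.

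Parts (i), (ii), and (iv) of your plan are correct and are the natural arguments. In (i) the Leibniz$+$H\"older$+H^2(\T^2)\hookrightarrow L^\infty$ bookkeeping closes because the lower-order factor always carries at most $r/2\le r-2$ derivatives for $r\geq 4$; for the bracket one $x$- and one $\zeta$-derivative go on each factor, accounting for the shift $l_1+l_2\to l_1+l_2-1$ and $r\to r-2$. In (ii) the point you need (and use) is that on the support of $\chi$ one has $\langle\xi\rangle\sim\langle\eta\rangle\sim\langle(\xi+\eta)/2\rangle$ and that $\sup_\zeta\langle\zeta\rangle^{-l}|\widetilde a(v,\zeta)|\lesssim\langle v\rangle^{-6}\|a\|_{\mathcal{M}^l_6}$ is summable in $v\in\Z^2$, giving the bound after summing over fixed $v=\xi-\eta$. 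Part (iv) is a direct kernel computation exactly as you outline: $\overline{K(\xi,\eta)}=K(\eta,\xi)$ when $a$ is real, and conjugating \eqref{Taf} together with the change of variable $\eta\mapsto -\eta$ gives \eqref{Alu2} with $a'(y,\zeta)=\overline{a(y,-\zeta)}$, matching $\widetilde{a'}(v,\zeta)=\overline{\widetilde a(-v,-\zeta)}$.

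In part (iii) there is one genuine conceptual gap, concerning \eqref{RemBounds2}. You argue that when $a,b$ are $\zeta$-independent functions ``only the second-order error survives'' in the Taylor expansion. But in that case $\widetilde a(\xi-\sigma,\cdot)$ and $\widetilde b(\sigma-\eta,\cdot)$ do not depend on the $\zeta$-argument at all, so the Taylor expansion in $\zeta$ is exactly the zeroth-order term and every Taylor remainder vanishes identically. Your proposed mechanism therefore produces $0$ and cannot yield the $+6$ gain. The actual source of $T_aT_b-T_{ab}$ for functions is the mismatch of the cutoffs: comparing the double sum for $T_aT_b$ with the single sum for $T_{ab}$, the kernels differ by the factor
\[
\chi\Big(\tfrac{|\xi-\sigma|}{|\xi+\sigma|}\Big)\chi\Big(\tfrac{|\sigma-\eta|}{|\sigma+\eta|}\Big)-\chi\Big(\tfrac{|\xi-\eta|}{|\xi+\eta|}\Big),
\]
which vanishes in the para-product regime and is nonzero only when at least one of $|\xi-\sigma|$, $|\sigma-\eta|$, $|\xi-\eta|$ is comparable to the output frequencies. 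There the $H^{12}$ regularity of $a$ or $b$ supplies decay in the convolution variable, and splitting those $12$ derivatives between the gain and the summability is what yields the $H^{m}\to H^{m+6}$ bound. This same $\chi$-mismatch term is also present in the general composition \eqref{lemComp1}; you didn't mention it there, but in that case it is a smoothing contribution safely absorbed into $E(a,b)$, consistent with the exact formulas \eqref{ExEab}--\eqref{ExEab2} the paper records for the Taylor-remainder part. For the general bound \eqref{RemBounds} your Taylor$+$part~(ii) count ($\nabla^2_\zeta$ on one factor, $\nabla^2_x$ on the other, then $r=6$ from part~(ii)) is the right mechanism, with $r=12$ comfortable.
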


In some of our calculations in section \ref{SecEqs} we will need more precise formulas for the error terms $E(a,b)$. The definitions show easily that that if $a=a(\zeta)$ is a Fourier multiplier, $b$ is a symbol, and $f$ is a function, then 
\begin{equation}
\label{ExEab}
\begin{split}
\widehat{E(a,b)f}(\xi) = \frac{1}{4\pi^2}\sum_{\eta\in\Z^2} \chi(\frac{\vert\xi-\eta\vert}{\vert\xi+\eta\vert})
  &\Big( a(\xi)-a(\frac{\xi+\eta}{2})-\frac{\xi-\eta}{2}\cdot\nabla a(\frac{\xi+\eta}{2}) \Big)\widetilde{b}(\xi-\eta,\frac{\xi+\eta}{2})\widehat{f}(\eta),
\end{split}
\end{equation}
and
\begin{equation}
\label{ExEab2}
\begin{split}
\widehat{E(b,a)f}(\xi) = \frac{1}{4\pi^2}\sum_{\eta\in\Z^2} \chi(\frac{\vert\xi-\eta\vert}{\vert\xi+\eta\vert})
&\Big( a(\eta)-a(\frac{\xi+\eta}{2})-\frac{\eta-\xi}{2}\cdot\nabla a(\frac{\xi+\eta}{2}) \Big)\widetilde{b}(\xi-\eta,\frac{\xi+\eta}{2})\widehat{f}(\eta).
\end{split}
\end{equation}

Our last lemma in this subsection concerns paralinearization of products. 

\begin{lemma}[Paralinearization]\label{lemHH}
(i) If $f,g\in L^2$ then
\begin{equation*}
fg = T_fg + T_gf + \mathcal{H}(f,g)
\end{equation*}
where $\mathcal{H}$ is smoothing in the sense that
\begin{equation}\label{al7}
\widehat{\mathcal{H}(f,g)}(\xi)=\frac{1}{4\pi^2}\sum_{\eta\in\Z^2}a_p(\xi-\eta,\eta)\widehat{f}(\xi-\eta)\widehat{g}(\eta),\qquad |a_p(v,w)|\lesssim \Big(\frac{1+\min(|v|,|w|)}{1+\max(|v|,|w|)}\Big)^{40}.
\end{equation}
As a consequence, if $f,g \in H^{m}$, $m\geq 20$, then
\begin{equation}\label{aux1.1}
{\|\mathcal{H}(f,g) \|}_{H^{m+10}} \lesssim {\|f\|}_{H^{m}} {\|g\|}_{H^{m}}.
\end{equation}

(ii) Assume that $F(z) = z + h(z)$, where $h$ is analytic for $|z| <1/2$ and satisfies $| h(z)| \lesssim |z|^3$. 
Then, for all  $m \geq 20$, and $u\in H^m$ with ${\| u \|}_{H^{m}}\leq 1/100$, one has
\begin{equation}
\label{Paracomp}
F(u) = T_{F^\prime(u)} u + E(u), \qquad {\| E(u) \|}_{H^{m+10}} \lesssim {\| u \|}_{H^{m}}^3.
\end{equation}
\end{lemma}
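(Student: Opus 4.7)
My plan is to prove both parts via direct Fourier analysis, reducing (ii) to (i) combined with the power series expansion of $F$.

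For part (i), I would compute $\widehat{T_fg}$, $\widehat{T_gf}$, and $\widehat{fg}$ directly from the definition \eqref{Taf}. Treating $f$ as a symbol independent of $\zeta$ so that $\widetilde f(v,w)=\widehat f(v)$,
\begin{equation*}
\widehat{T_fg}(\xi)=\tfrac{1}{4\pi^2}\sum_\eta \chi\!\left(\tfrac{|\xi-\eta|}{|\xi+\eta|}\right)\widehat{f}(\xi-\eta)\widehat{g}(\eta),
\end{equation*}
and, after the substitution $\eta\mapsto\xi-\eta$ in the analogous formula for $T_gf$,
\begin{equation*}
\widehat{T_gf}(\xi)=\tfrac{1}{4\pi^2}\sum_\eta \chi\!\left(\tfrac{|\eta|}{|2\xi-\eta|}\right)\widehat{f}(\xi-\eta)\widehat{g}(\eta).
\end{equation*}
Subtracting these from $\widehat{fg}(\xi)=\tfrac{1}{4\pi^2}\sum_\eta \widehat{f}(\xi-\eta)\widehat{g}(\eta)$ yields \eqref{al7} with the explicit multiplier $a_p(v,w)=1-\chi(|v|/|v+2w|)-\chi(|w|/|2v+w|)$. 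Since $\chi=\varphi_{\leq -20}$, the first $\chi$ equals $1$ when $|v|\ll|w|$ and the second when $|w|\ll|v|$, with disjoint ``full support'' regions; consequently $a_p$ is supported in $\{\min(|v|,|w|)\gtrsim 2^{-20}\max(|v|,|w|)\}$, which produces the pointwise bound in \eqref{al7} (with a generous implicit constant). For \eqref{aux1.1}, on the support of $a_p$ one has $|\xi|=|v+w|\lesssim\min(|v|,|w|)$, so the weight redistributes as $\langle\xi\rangle^{m+10}\lesssim\langle v\rangle^{(m+10)/2}\langle w\rangle^{(m+10)/2}$; Cauchy--Schwarz in $\eta$ combined with the Sobolev embedding $H^1(\T^2)\hookrightarrow L^4$ closes the estimate whenever $m\geq 20$.

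For part (ii), working modulo the zero Fourier mode (so that $T_1 u$ coincides with $u$), it suffices to show $\|h(u)-T_{h'(u)}u\|_{H^{m+10}}\lesssim\|u\|_{H^m}^3$. I would expand $h(z)=\sum_{k\geq 3}c_k z^k$ and prove by induction on $k\geq 2$ that
\begin{equation*}
u^k=T_{k u^{k-1}}u+R_k(u),\qquad \|R_k(u)\|_{H^{m+10}}\lesssim C_0^{\,k}\|u\|_{H^m}^k,
\end{equation*}
for some absolute $C_0$. The inductive step applies part (i) to $u^{k+1}=u\cdot u^k = T_u u^k+T_{u^k}u+\mathcal{H}(u,u^k)$; substituting the inductive formula for $u^k$ and invoking the composition identity \eqref{lemComp1}--\eqref{RemBounds2} rewrites $T_uT_{k u^{k-1}}$ as $T_{k u^k}$ plus an operator smoothing into $H^{m+6}$, which combined with $T_{u^k}u$ produces the desired leading term $T_{(k+1)u^k}u=T_{(u^{k+1})'}u$. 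The remainders are controlled by \eqref{aux1.1} and by Moser's inequality $\|u^k\|_{H^m}\lesssim C^k\|u\|_{H^m}^k$ (which uses $\|u\|_{L^\infty}\lesssim\|u\|_{H^m}\leq 1/100$ by Sobolev embedding). Multiplying by $c_k$ and summing over $k\geq 3$, with analyticity of $h$ on $|z|<1/2$ justifying the interchange, yields \eqref{Paracomp}.

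The main obstacle will be the technical bookkeeping in the induction. First, one needs to upgrade the $H^{m+6}$ smoothing from \eqref{RemBounds2} to $H^{m+10}$; this is possible by iterating the composition identity once more and exploiting that $u\in H^m$ with $m\geq 20$ leaves plenty of margin. Second, the constants $C_0^k$ must not destroy summability against $|c_k|$; this is handled by the smallness $\|u\|_{H^m}\leq 1/100$, which forces the geometric series $\sum_k |c_k|C_0^k\|u\|_{H^m}^k$ to converge and isolates the leading cubic term $\sim c_3\|u\|_{H^m}^3$. A minor subtlety is the zero-mode discrepancy between $T_1u$ and $u$, which contributes only a single Fourier coefficient absorbable into $E(u)$ (and vanishes outright in the intended applications to \eqref{WWE}, where the relevant variables have zero mean by \eqref{mai1}).
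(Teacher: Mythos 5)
The paper states Lemma~\ref{lemHH} without proof (it is a standard paralinearization fact), so there is no in-paper argument to compare against. Your direct Fourier-side computation for part (i) is the right one: the subtraction gives exactly
$a_p(v,w)=1-\chi\big(|v|/|v+2w|\big)-\chi\big(|w|/|2v+w|\big)$, and since $\chi=\varphi_{\leq -20}$, the identity $a_p\equiv 0$ on $\{\min(|v|,|w|)\lesssim 2^{-20}\max(|v|,|w|)\}$ follows from the same kind of triangle-inequality checks you sketch; combined with $|a_p|\leq 1$ this gives \eqref{al7} (with a large implicit constant, which is harmless). The Sobolev/Young step for \eqref{aux1.1} is routine; invoking $H^1(\T^2)\hookrightarrow L^4$ is one valid way, though redistributing $\langle\xi\rangle^{m+10}\lesssim\langle v\rangle^{(m+10)/2}\langle w\rangle^{(m+10)/2}$ and applying Young's $\ell^1*\ell^2\to\ell^2$ is a bit cleaner. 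Your induction on monomials for part (ii) is the standard route, and you correctly identify the two spots where bookkeeping is needed: one must upgrade the $H^{m+6}$ gain in \eqref{RemBounds2} to $H^{m+10}$ (doable since $a,b$ will be in $H^{m}$ with $m\geq 20$, which gives ample margin once one reworks the estimate with $H^{13}$ in place of $H^{12}$), and one must track the geometric constants against the Cauchy estimates for $h$, which is where $\|u\|_{H^m}\leq 1/100$ is used.

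One point deserves a sharper statement than you give. You write that the zero-mode discrepancy $u-T_1u=P_{-\infty}u$ is ``absorbable into $E(u)$''; taken literally that is wrong, since $\|P_{-\infty}u\|_{H^{m+10}}\approx |\widehat u(0)|$ is linear, not cubic, in $\|u\|_{H^m}$ (take $u$ constant: $F(u)$ is constant while $T_{F'(u)}u\equiv 0$, so $E(u)=F(u)$ has size $\approx \|u\|_{H^m}$). So \eqref{Paracomp} as stated only holds under the additional (implicit) hypothesis $\widehat u(0)=0$, or with the convention $T_1=\mathrm{Id}$, or with the right-hand side modified to $T_{F'(u)}u+P_{-\infty}u+E(u)$. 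Your parenthetical --- that the mean-zero condition holds in the intended applications --- is the correct resolution; just phrase it as a required hypothesis rather than as the zero mode being ``absorbable'' at cubic order.
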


\subsection{The main bootstrap proposition}\label{MainToProve}

The following is our main bootstrap proposition:

\begin{proposition}\label{MainBootstrap}
Assume that $N_0=30$, $\sigma=1$, and $g\in(0,\infty)\setminus\mathcal{N}$, where $\mathcal{N}=\mathcal{N}_{1/2}\cup\mathcal{R}$, 
see definitions \eqref{ml16} and \eqref{ml52}. Then there is a sufficiently large constant $\mathcal{K}_{g}\in [10,\infty)$ with the following property: 
assume that $\varep\in (0,\mathcal{K}_g^{-4}]$,
\begin{equation}\label{al0}
1\leq T\leq T_\e:= \KK^{-4}_g \varep^{-5/3}[\log(2/\varep)]^{-2},
\end{equation}
and $(h,\phi)\in C([0,T]:H^{N_0+1}\times H^{N_0+1/2})$ is a solution of the system \eqref{WWE} 
with $\sigma=1$ on $\T^2\times [0,T]$ satisfying the assumptions
\begin{equation}\label{al01}
{\|\langle \nabla \rangle h(0)\|}_{H^{N_0}} + {\| \,|\nabla|^{1/2} \phi(0) \|}_{H^{N_0}} \leq \varep,\qquad \int_{\T^2}h(x,0)\,dx=0,
\end{equation}
and
\begin{equation}\label{al1}
\sup_{t\in[0,T]}\big\{{\|\langle \nabla \rangle h(t)\|}_{H^{N_0}} + {\| \,|\nabla|^{1/2} \phi(t) \|}_{H^{N_0}}\big\} \leq \KK_g\varep.
\end{equation} 
Then $\int_{\T^2}h(x,t)\,dx=0$ for any $t\in[0,T]$ and the solution satisfies the improved bounds
\begin{equation}\label{al2}
\sup_{t\in[0,T]}\big\{{\|\langle \nabla \rangle h(t)\|}_{H^{N_0}} + {\| \,|\nabla|^{1/2} \phi(t) \|}_{H^{N_0}}\big\} \leq \KK_g\varep/2.
\end{equation}  
\end{proposition}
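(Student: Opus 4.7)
The plan is to set up a symmetrized energy for a well-chosen complex scalar unknown $U$ and then prove a quartic-type energy inequality through two successive integrations by parts in time, using the small divisor estimates from Propositions \ref{prop1} and \ref{prop2} together with the depletion structure coming from the Hamiltonian/time-reversal symmetries.

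The first step is to paralinearize the Zakharov system \eqref{WWE} to construct an ``improved good variable'' $U \approx \sqrt{g+\sigma|\nabla|^2}h + i|\nabla|^{1/2}\phi$ (modulo paradifferential corrections), satisfying an equation of the schematic form $\partial_t U + i\Lambda U = iT_{V\cdot\zeta}U + \mathcal{Q}_U + \mathcal{R}_U$; see Proposition \ref{proeqU} as referenced in the introduction. The structural features I need are (a) the quadratic term $\mathcal{Q}_U$ has a \emph{real} symbol $a_{++}$, manifesting the time-reversibility of \eqref{WWE}, and (b) the remainder $\mathcal{R}_U$ gains $3/2$ derivatives. The mean-zero preservation $\int_{\T^2} h(x,t)\,dx = 0$ is immediate by integrating the first equation of \eqref{WWE} and noting that $G(h)\phi$ has zero mean. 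I then define $\mathcal{E}_{N_0}(t) \approx \|U(t)\|_{H^{N_0}}^2 \approx \|\langle\nabla\rangle h\|_{H^{N_0}}^2 + \||\nabla|^{1/2}\phi\|_{H^{N_0}}^2$; differentiating in time and passing to the linear profiles $u = e^{it\Lambda}U$, $w = e^{it\Lambda}\langle\nabla\rangle^{N_0}U$, as in \eqref{evol1}--\eqref{evol2}, gives a cubic expression whose symbol $m(\xi,\eta)$ factorizes through a depletion factor exactly as in \eqref{evolsym2}, so that $|m(\xi,\eta)| \lesssim (1+|\Phi(\xi,\eta)|)/(1+|\xi+\eta|)$ on the support of the sum.

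The second step is to split the cubic increment using a frequency cutoff $2^D$ (to be optimized) and the size of $|\Phi|$. In the high-modulation regime $|\Phi|\gtrsim 1$, I run a standard normal form (integrate by parts in time using \eqref{evol3}, then re-symmetrize to avoid losses) and obtain a quartic contribution bounded by $T(\KK_g\e)^4$. In the small-modulation, high-frequency regime $|\xi|\gtrsim 2^D$, the depletion factor \eqref{introcorre} alone yields an absolute bound of order $T(\KK_g\e)^3 2^{-D}$. In the delicate small-modulation, low-frequency region $|\xi|\lesssim 2^D$, I integrate by parts in time using the cubic small divisor lower bound of Proposition \ref{prop1} (paying at most $\langle\xi\rangle^{3/2+}$), and substitute \eqref{evol3} for $\partial_t w$; here a second depletion phenomenon in $\partial_t w \approx e^{it\Lambda}\nabla V\cdot\nabla W$ costs only half a derivative in the worst case, producing quartic expressions with four-way phases $\Psi_\iota$ as in \eqref{evol17}.

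The third step is to estimate these quartic terms by a second integration by parts in time, now using the four-way small divisor bound $|\Psi_\iota|^{-1} \lesssim \langle\xi\rangle^{1/2}$ from Proposition \ref{prop2}. This bound is valid only away from the ``trivial cubic resonance'' $\xi = \rho$, $\iota = +$; the contribution of that configuration cancels identically because the symbol $m$ (and more fundamentally $a_{++}$) is real-valued, which is the manifestation of time-reversibility. The resulting quintic and sextic error terms are controlled directly in $L^2$ from the bootstrap assumption \eqref{al1} and the smoothing of $\mathcal{R}_U$. Balancing the three regimes — the normal-form bound $\sim T\e^4$, the high-frequency bound $\sim T\e^3 2^{-D}$, and the quartic bound $\sim T\e^4 2^{D/2}\log$ after the second integration by parts — gives the optimal choice $2^D \sim \e^{-2/3}$ and $T\lesssim \e^{-5/3}[\log(2/\e)]^{-2}$, upgrading \eqref{al1} to \eqref{al2}. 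I expect the main obstacle to be the bookkeeping of the two symmetrized normal forms: propagating the depletion factor through the paradifferential calculus, identifying exactly which pieces of the quartic symbol vanish by the reality of $a_{++}$, and ensuring that the quartic derivative losses never exceed what the bound of Proposition \ref{prop2} can absorb. The precise structure recorded in Proposition \ref{proeqU} is what makes this possible.
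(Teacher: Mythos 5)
Your proposal is correct and follows essentially the same route as the paper: the improved good variable (Proposition \ref{proeqU}) with its depletion structure and the reality of $a_{++}$, the modulation/frequency split, and two successive integrations by parts in time using Propositions \ref{prop1} and \ref{prop2}, with the trivial quartic resonance killed by the reality of the symbol. Sections~\ref{SecEqs}--\ref{SecEn2} of the paper carry out exactly this plan (the paper works with $W_n=(T_\Sigma)^nU$ rather than $\langle\nabla\rangle^{N_0}U$ to keep the paradifferential commutators clean, but this is a bookkeeping choice, not a different idea).
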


We notice that solutions of the water waves system \eqref{WWE} have the following rescaling pro\-per\-ty:
if $(h,\phi)$ is a solution of the system \eqref{WWE} on $\T^2\times [0,T]$ with parameters $(g,\sigma)$, then the pair $(h',\phi')$ defined by
\begin{equation*}
h'(x,t):=h\big(x,t/\sqrt{\sigma}\big),\qquad \phi'(x,t):=\frac{1}{\sqrt{\sigma}}\phi\big(x,t/\sqrt{\sigma}\big)
\end{equation*}
is a solution of \eqref{WWE} on $\T^2\times [0,\sqrt{\sigma}T]$ with parameters $(g',\sigma'):=(g/\sigma,1)$. 

As a consequence, it is clear that Theorem \ref{MainTheo} follows from Proposition \ref{MainBootstrap}
and the local regularity theorem for the system \eqref{WWE}. 
The rest of the paper is concerned with the proof of Proposition \ref{MainBootstrap}.

\section{Paralinearization and the ``improved good variable''}\label{SecEqs}
In this section we assume that $N_0=30$ and $(h,\phi)\in C([0,T]:H^{N_0+1}\times H^{N_0+1/2})$ is a solution of \eqref{WWE} satisfying
\begin{equation}
\label{Sob1}
{\|\langle \nabla \rangle h(t)\|}_{H^{N_0}} + {\| \,|\nabla|^{1/2} \phi(t) \|}_{H^{N_0}} \leq \oe\ll 1,\qquad \int_{\T^2}h(x,t)\,dx=0
\end{equation}
for any $t\in[0,T]$. Our goal in this section is to express the system \eqref{WWE} as a scalar equation for a suitably constructed complex-valued function.
The variable we construct here, which we call the ``improved good variable'', 
is more suitable for long time analysis than the standard ``good variable'' constructed in the local theory. 
The main advantage of this variable is that it allows us to use of a special structure in the equation, 
connected to the original Hamiltonian structure of the system, 
which effectively leads to a gain of one derivative in the region close to the resonant sets.

\subsection{Paralinearization} Before stating and proving our main Proposition \ref{proeqU}, we state two lemmas 
about the paralinearization of the Dirichlet-Neumann operator $G(h)\phi$ and the water waves system \eqref{WWE}. 

\begin{lemma}[Paralinearization of the Dirichlet-Neumann operator]\label{DNmainLem}
Assume that $(h,\phi)$ is a solution of the system \eqref{WWE} satisfying \eqref{Sob1}. Define
\begin{align}
\label{DefBV}
B := \frac{G(h)\phi+\nabla_xh\cdot\nabla_x\phi}{1+\vert\nabla h\vert^2}, 
  \qquad V := \nabla_x \phi - B\nabla_x h, \qquad \omega := \phi - T_B h.
\end{align}
Then $G(h)\phi,B,V\in \oe H^{N_0-1/2}$ and
\begin{equation}\label{int0}
\int_{\T^2}[G(h)\phi](x,t)\,dx=0\qquad\text{ for any }t\in[0,T].
\end{equation}
Moreover, we can paralinearize the Dirichlet-Neumann operator as
\begin{equation}
\label{DNmainformula}
G(h)\phi = T_{\lambda}\omega- \mathrm{div} (T_V h) + G_2 + \oe^3 H^{N_0+3/2},
\end{equation}
where
\begin{align}
\label{deflambda}
\begin{split}
\lambda &:= \lambda^{(1)}+\lambda^{(0)}, 
\\
\lambda^{(1)}(x,\zeta) &:= \sqrt{(1+|\nabla h|^2) |\zeta|^2-(\zeta\cdot\nabla h)^2},
\\
\lambda^{(0)}(x,\zeta) &:= \frac{(1+|\nabla h|^2)^2}{2\lambda^{(1)}} 
  \Big\{ \frac{\lambda^{(1)}}{1+|\nabla h|^2}, \frac{\zeta \cdot \nabla h}{1+|\nabla h|^2} \Big\} + \frac{1}{2}\Delta h.
\end{split}
\end{align}
The quadratic terms in \eqref{DNmainformula} are given by
\begin{equation}
 \label{DNquadratic}
G_2 = G_2(h, |\nabla|^{1/2}\omega) \in \oe^2 H^{N_0+5/2}, 
  \qquad \what{G_2}(\xi) = \frac{1}{4\pi^2}\sum_{\eta\in\Z^2} g_2(\xi,\eta) \what{h}(\xi-\eta) |\eta|^{1/2} \what{\omega}(\eta) \,d\eta,
\end{equation}
where $g_2$ is a symbol satisfying
\begin{align}
\begin{split}
 \label{DNquadraticsym}
|g_2(\xi,\eta)| & \lesssim |\xi|\min(|\eta|,|\xi-\eta|)^{1/2} \Big(\frac{1+\min(|\eta|,|\xi-\eta|)}{1+\max(|\eta|,|\xi-\eta|)}\Big)^{7/2}.
\end{split}
\end{align}

Moreover we have
\begin{equation}\label{dtg}
\partial_t(G(h)\phi) - |\nabla|\partial_t\phi\in \oe^2 H^{N_0-2}.
\end{equation}

\end{lemma}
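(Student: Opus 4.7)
\medskip
\noindent\textbf{Proof plan for Lemma \ref{DNmainLem}.}

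The plan is to follow a paralinearization scheme for the Dirichlet-Neumann operator in the spirit of Alazard-Metivier and Alazard-Burq-Zuily, but keeping careful track of the quadratic remainder structure, which is the novel ingredient we need for our later resonance analysis. First I would establish the Sobolev bounds and the zero-mean property. Writing $\Phi$ for the harmonic extension of $\phi$ into $\Omega_t$ with $\Phi \to 0$ as $y\to -\infty$, standard elliptic estimates (after flattening the domain via the change of variables $(x,y)\mapsto (x,y-h(x))$ and using the small-data assumption \eqref{Sob1}) give $\|\nabla_{x,y}\Phi\|_{H^{N_0-1/2}}\lesssim \bar\varepsilon$, which yields the bound $G(h)\phi, B, V\in \bar\varepsilon H^{N_0-1/2}$. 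The integral identity \eqref{int0} follows either from the divergence theorem applied to $\nabla_{x,y}\Phi$ on $\{-R<y<h(x)\}$ and letting $R\to\infty$, or equivalently because $G(h)\phi = \partial_x\cdot(\text{something})$ up to its action on the zero mode.

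Next I would establish the main paralinearization formula \eqref{DNmainformula}. The idea is to straighten the free boundary by the change of variables $(x,z)\mapsto (x,z+h(x))$, which transforms the Laplacian into an operator of the form $(1+|\nabla h|^2)\partial_z^2 + \Delta_x - 2\nabla h\cdot\nabla_x\partial_z - \Delta h\,\partial_z$. Paralinearizing the coefficients and then factoring the resulting symbol as the product of two first-order symbols of opposite signs (a parabolic-type factorization in the symbol calculus of Lemma \ref{PropSym}), one reads off the principal symbol $\lambda^{(1)}$ as the positive root of the principal quadratic equation and extracts the subprincipal correction $\lambda^{(0)}$ exactly as in \eqref{deflambda}, using the Poisson bracket formula \eqref{lemComp1}. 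Introducing Alinhac's good unknown $\omega=\phi-T_Bh$ eliminates the worst derivative of $h$ from the remainder, and yields $G(h)\phi = T_\lambda\omega - \mathrm{div}(T_Vh) + \text{(cubic)}$, where the cubic error lives in $\bar\varepsilon^3 H^{N_0+3/2}$ by \eqref{symclass}--\eqref{LqBdTa} and \eqref{Paracomp}.

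The main obstacle, and the step that requires the most care, is the identification of the quadratic part $G_2$ with the symbol estimate \eqref{DNquadraticsym}. The second-order Taylor expansion of $G(h)\phi$ around $(h,\phi)=(0,0)$ is classically $G(h)\phi = |\nabla|\phi - \mathrm{div}(h\nabla\phi) - |\nabla|(h|\nabla|\phi) + O(h^2,\phi)$. I would subtract from this the quadratic parts already contained in $T_\lambda\omega - \mathrm{div}(T_Vh)$ — expanding $\lambda$, $V$, and $\omega$ to second order and using the definition \eqref{Taf} of the paradifferential operator, which cuts off frequencies by $\chi(|\xi-\eta|/|\xi+\eta|)$. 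What remains is a bilinear form in $h$ and $|\nabla|^{1/2}\omega$ whose kernel $g_2(\xi,\eta)$ is supported in the high-high region $|\xi-\eta|\gtrsim |\xi+\eta|$ or in regions where the Taylor expansion of the symbols contributes; in both cases the difference between the full multiplier and its paralinearization provides the desired high-low gain, captured by the factor $((1+\min)/(1+\max))^{7/2}$ in \eqref{DNquadraticsym}. The factor $|\xi|\min(|\eta|,|\xi-\eta|)^{1/2}$ then just reflects the homogeneity balance between $h$ and $|\nabla|^{1/2}\omega$ in the quadratic expansion. I would verify the power $7/2$ by a direct expansion of $\lambda^{(1)}$ to second order combined with the paraproduct reminder estimate \eqref{aux1.1}.

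Finally, the time-derivative estimate \eqref{dtg} follows by differentiating the paralinearization formula in $t$. From \eqref{WWE}, $\partial_t h = G(h)\phi \in \bar\varepsilon H^{N_0-1/2}$ and $\partial_t\phi \in \bar\varepsilon H^{N_0-1/2}$, so each factor $h$-dependent in $\lambda$, $V$, $B$ contributes at least one factor of $\bar\varepsilon$ when differentiated in time. The linear part $T_{|\nabla|}\omega = |\nabla|\omega$ matches $|\nabla|\partial_t\phi$ up to $|\nabla|\partial_t(T_B h)$, which is quadratic of size $\bar\varepsilon^2$ in $H^{N_0-2}$; the remaining terms $-\mathrm{div}(T_{\partial_t V}h)-\mathrm{div}(T_V\partial_t h)+\partial_t G_2 + \partial_t E$ are likewise quadratic, yielding the stated $\bar\varepsilon^2 H^{N_0-2}$ bound.
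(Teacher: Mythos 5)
The paper itself does not prove Lemma \ref{DNmainLem}; it defers to Proposition B.1 of \cite{DIPP} and to the Alazard--Burq--Zuily style of paralinearization. Your sketch (flatten the domain, paralinearize and factor the resulting second-order elliptic symbol, introduce Alinhac's good unknown, then extract $G_2$ by comparing the second-order Taylor expansion of $G(h)\phi$ against the quadratic content of $T_\lambda\omega-\mathrm{div}(T_Vh)$) follows exactly the same route, so at the level of strategy the two coincide. The one place where your argument is under-specified relative to what would actually need to be checked is the precise exponent $7/2$ in \eqref{DNquadraticsym}: this does not come from a single paraproduct remainder bound like \eqref{aux1.1} but from combining (a) the high-low gain of the cutoff $\chi(|\xi-\eta|/|\xi+\eta|)$ in \eqref{Taf}, (b) the Taylor remainder of $\lambda^{(1)},\lambda^{(0)}$ in $h$, and (c) the residual bilinear kernel left over after subtracting $-\mathrm{div}(h\nabla\phi)-|\nabla|(h|\nabla|\phi)$; a direct computation of that residual kernel is what produces the stated power, and your one-sentence plan for this step would need to be carried out in full to close the argument.
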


A similar result in the Euclidean case is proved in Proposition B.1 of \cite{DIPP}.
It is based on the use of paradifferential calculus along the lines of the works of Alazard-Burq-Zuily \cite{ABZ1,ABZ2}.
Here we stated all the formulas using the simpler setting of Sobolev spaces of periodic functions, 
while a more elaborate setting (including decay information and vector-fields) is used in \cite{DIPP}.
Paralinearization formulas like \eqref{DNmainformula} have been crucially used in many previous works
on the local \cite{ABZ1,ABZ2} and global \cite{ADa,ADb} theory.

As a consequence of Lemma \ref{DNmainLem} one can obtain the following paralinearization of the water waves system:

\begin{lemma}[Paralinearization of the system]\label{SysparaLem}
Let $\Lambda := \sqrt{g|\nabla|+\sigma|\nabla|^3}$, and define
\begin{align}
\label{defell}
\begin{split}
\ell(x,\zeta) & := L_{ij}(x) \zeta_i \zeta_j - \Lambda^2 h,  
  \qquad L_{ij} := \frac{\sigma }{\sqrt{1+|\nabla h|^2}} \Big(\delta_{ij} - \frac{\partial_i h \partial_j h}{1 + |\nabla h|^2} \Big),
\end{split}
\end{align}
to be the mean curvature operator coming from the surface tension.

With the notation of Lemma \ref{DNmainLem} we can rewrite the system \eqref{WWE} as
\begin{equation}\label{WWpara}
\left\{
\begin{array}{l}
\partial_t h = T_{\lambda} \omega - \mathrm{div}( T_V h) + G_2 + \oe^3 H^{N_0+1},
\\
\\
\partial_t \omega = - g h  - T_\ell h - T_V \nabla\omega + \Omega_2 + \oe^3 H^{N_0+1},
\end{array}
\right.
\end{equation}
where 
\begin{align}
\label{d_tomegaquadratic}
& \Omega_2 := \frac{1}{2} \mathcal{H}(|\nabla|\omega,|\nabla|\omega) 
  - \frac{1}{2} \mathcal{H}(\nabla\omega,\nabla\omega) \in \oe^2 H^{N_0+2}.
\end{align}
\end{lemma}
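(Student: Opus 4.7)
My plan is to derive the first equation of \eqref{WWpara} directly from $\partial_t h = G(h)\phi$ via the paralinearization \eqref{DNmainformula} of Lemma \ref{DNmainLem}; this gives the expression immediately, and the stronger remainder $\oe^3 H^{N_0+3/2}$ is absorbed into $\oe^3 H^{N_0+1}$. The bulk of the work is the second equation.

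For that, I would differentiate the good unknown $\omega = \phi - T_B h$ in time, giving
\[
\partial_t\omega = \partial_t\phi - T_B\,\partial_t h - T_{\partial_t B}\,h,
\]
then substitute $\partial_t\phi$ from \eqref{WWE} and $\partial_t h = G(h)\phi$, and paralinearize each nonlinearity in turn. The mean-curvature term $\sigma\,\div\bigl[\nabla h/\sqrt{1+|\nabla h|^2}\bigr]$ has the form $L_{ij}(x)\partial_i\partial_j h$ by the chain rule, and Bony's formula (Lemma \ref{lemHH}) applied component by component produces $-T_{\ell}\,h$ modulo smoother cubic remainders, with $\ell$ as in \eqref{defell}. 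For the Bernoulli-type terms $-\tfrac12|\nabla\phi|^2 + \tfrac12 B^2(1+|\nabla h|^2)$ I would decompose $\nabla\phi = V + B\nabla h$ and $1+|\nabla h|^2 = 1 + O(\oe^2)$; Lemma \ref{lemHH} then turns each square into a paraproduct plus its symmetric $\mathcal{H}$-remainder. Replacing $V \approx \nabla\omega$ and $B \approx |\nabla|\omega$ at leading order, the paraproducts collapse into $-T_V\nabla\omega + T_B B$ and the $\mathcal{H}$-remainders sum to exactly $\Omega_2 = \tfrac12\mathcal{H}(|\nabla|\omega,|\nabla|\omega) - \tfrac12\mathcal{H}(\nabla\omega,\nabla\omega)$.

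The key structural cancellation then takes place in the remaining algebra: the $T_B B$ produced by $\tfrac12 B^2$ pairs with $-T_B\,\partial_t h = -T_B\,G(h)\phi$, and using the leading paralinearization $G(h)\phi \approx T_{\lambda^{(1)}}\omega$ from \eqref{DNmainformula} together with $B \approx |\nabla|\omega$, the two cancel modulo cubic remainders. The term $T_{\partial_t B}\,h$ is itself cubic, since $\partial_t B$ is at least linear in $\oe$ (using the definition \eqref{DefBV} of $B$ together with the evolution equations and \eqref{dtg}), so this contribution falls into $\oe^3 H^{N_0+1}$.

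The main obstacle I anticipate is the bookkeeping of cubic remainders. Each of the replacements $V \mapsto \nabla\omega$, $B \mapsto |\nabla|\omega$, $1+|\nabla h|^2 \mapsto 1$, $T_a T_b \mapsto T_{ab}$ via Lemma \ref{PropSym}(iii), and $\nabla\phi \mapsto \nabla\omega + \nabla(T_B h)$ produces remainders whose regularity must be checked against the target $\oe^3 H^{N_0+1}$. The balance uses the paraproduct bound \eqref{LqBdTa}, the composition-error bound \eqref{RemBounds}, the smoothing gain \eqref{aux1.1} for $\mathcal{H}$, the $3/2$-derivative gain in the Dirichlet--Neumann remainder \eqref{DNmainformula}, and the bound \eqref{dtg} on the time derivative; together these should close the estimates at the claimed $H^{N_0+1}$ level of regularity for all the cubic remainders.
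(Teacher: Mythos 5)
Your outline of the first equation is fine, and your handling of the Bernoulli terms and the cancellation $T_B B - T_B G(h)\phi = T_B(B-G(h)\phi) = O(\oe^3)$ is essentially correct. However, there is a genuine gap in the treatment of $T_{\partial_t B}\,h$, and it is exactly the step that produces the $-\Lambda^2 h$ component of the symbol $\ell$.

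You claim that $T_{\partial_t B}\,h$ is cubic because ``$\partial_t B$ is at least linear in $\oe$.'' But if $\partial_t B = O(\oe)$ and $h = O(\oe)$, the product $T_{\partial_t B}\,h$ is $O(\oe^2)$ --- quadratic, not cubic. In fact, to leading order $B \approx G(h)\phi \approx |\nabla|\phi$, so using \eqref{dtg} one finds
\begin{equation*}
\partial_t B \approx |\nabla|\partial_t\phi \approx |\nabla|(-gh + \sigma\Delta h) = -(g|\nabla| + \sigma|\nabla|^3)h = -\Lambda^2 h + O(\oe^2),
\end{equation*}
which is linear in the solution. Consequently $-T_{\partial_t B}\,h = T_{\Lambda^2 h}\,h + O(\oe^3)$ is a \emph{quadratic} term that must appear in the equation for $\partial_t\omega$, and it is precisely the $-\Lambda^2 h$ summand in the definition \eqref{defell} of $\ell$: writing $-T_\ell h = -T_{L_{ij}\zeta_i\zeta_j}h + T_{\Lambda^2 h}h$, the first piece comes from the surface-tension term $L_{ij}\partial_i\partial_j h$, while the second piece comes from $-T_{\partial_t B}\,h$. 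Your proof asserts that Bony's formula applied to the mean-curvature term alone yields all of $-T_\ell h$, which is not the case: it produces only the $-T_{L_{ij}\zeta_i\zeta_j}h$ part, plus a cubic remainder $T_{\partial_i\partial_j h}(L_{ij}-\sigma\delta_{ij}) + \mathcal{H}(L_{ij},\partial_i\partial_j h)$. Without correctly accounting for $T_{\partial_t B}\,h$, the symbol $\ell$ you construct would be missing the $-\Lambda^2 h$ term, which is needed later (e.g.\ it enters the symbol $\Sigma_1$ in \eqref{Exps} and the quadratic structure of the good-variable equation). You should redo this step: identify the leading-order linear part of $\partial_t B$ via \eqref{dtg} and the linearized equation, peel it off as a genuine quadratic contribution to $-T_\ell h$, and verify that only the remaining $T_{\partial_t B - (-\Lambda^2 h)}\,h$ piece is cubic.
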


\subsection{The improved good variable} We show now that the system \eqref{WWpara}
can be symmetrized by introducing an improved version of Alinhac's ``good unknown'', 
which reveals a key structure in the system, related to smoothing of resonant interactions.
This is done in Proposition \ref{proeqU} below, whose proof follows the ideas in \cite{DIPP}.
However, here we uncover additional special structures in the equations that have not been observed before.
We will comment on these structures and their relevance, after the statement of the proposition, in Remark \ref{Remark1}.

\begin{proposition}\label{proeqU}
Assume that $(h,\phi)\in C([0,T]:H^{N_0+1}\times H^{N_0+1/2})$ is a solution of the water waves system \eqref{WWE} 
satisfying \eqref{Sob1}. With $\lambda$ and $\ell$ as above, we define the symbol 
\begin{align}
\label{defSigma}
\Sigma(x,\zeta) & :=\sqrt{\lambda(x,\zeta)(g+\ell(x,\zeta))},
\end{align}
and the complex-valued variable
\begin{align}
\label{defscalarunk}
\begin{split}
&U :=  T_{\sqrt{g+\ell}}h + iT_{\Sigma}T_{1/\sqrt{g+\ell}}\omega+iT_{m^\prime}\omega,\qquad
m^\prime(x,\zeta) :=\frac{i}{2}\frac{(\div V)(x)}{\sqrt{g+\ell(x,\zeta)}},
\end{split}
\end{align}
where $V$ and $\omega$ are defined in \eqref{DefBV}. 
Then 
\begin{align}
\label{Uhorel} 
U = \sqrt{g + \sigma|\nabla|^2} h + i |\nabla|^{1/2} \omega + \oe^2 H^{N_0},
\end{align}
and $U$ satisfies the equation
\begin{align}
\label{eqU}
& (\partial_t + i T_{\Sigma} + iT_{V\cdot\zeta})U = \mathcal{N}_U + \mathcal{Q}_U + \mathcal{R}_U + \mathcal{C}_U,
\end{align}
where 

\setlength{\leftmargini}{2em}
\begin{itemize}

\medskip
\item  The quadratic term $\mathcal{N}_U$ has the special explicit structure
\begin{equation}
\label{eqUN}
\mathcal{N}_U := c_1 T_\gamma U + c_2 T_\gamma\overline{U}, 
\end{equation}
for some constants $c_1,c_2 \in \R$, 
where 
\begin{equation}
\label{gamma}
\gamma(x,\zeta) := \frac{\zeta_i\zeta_j}{ |\zeta|^2} |\nabla|^{-1/2} \partial_i\partial_j(\Im U)(x).
\end{equation}

\medskip
\item The quadratic terms $\mathcal{Q}_U$ are of the form
\begin{align}
\label{eqUquadspec}
& \mathcal{Q}_U = A_{++}(\Im U,U) + A_{+-}(U,\bar{U}) + A_{--}(\bar{U},\bar{U})\in \oe^2 H^{N_0+1},
\end{align}
with symbols $a_{\iota_1\iota_2}$, that is
\begin{align}
\label{bilnot}
\mathcal{F}[A_{\iota_1\iota_2} (f,g)] (\xi) := \frac{1}{4\pi^2} \sum_{\eta\in\Z^2} a_{\iota_1\iota_2}(\xi,\eta) \what{f}(\xi-\eta) \what{g}(\eta),
\end{align}
satisfying for all $\xi,\eta \in \mathbb{Z}^2$, and $(\iota_1\iota_2) \in \{ (++), (+-), (--)\}$
\begin{align}
\label{symbolseqUspec}
\begin{split}
|a_{\iota_1\iota_2}(\xi,\eta)| & \lesssim \frac{[1+|\xi-\eta|]^4}{1+|\eta|} \varphi_{\leq -5}(|\xi-\eta|/|\eta|),
\qquad a_{++}(\xi,\eta) \in \R.
\end{split}
\end{align}

\medskip
\item The quadratic terms $\mathcal{R}_U$ have a gain of $3/2$ derivatives, i.e. they are of the form
\begin{align}
\label{eqUquad}
& \mathcal{R}_U = B_{++}(U,U) + B_{+-}(U,\bar{U}) + B_{--}(\bar{U},\bar{U})\in \oe^2 H^{N_0+3/2},
\end{align}
with symbols $b_{\iota_1\iota_2}$ satisfying, for all $\xi,\eta \in \mathbb{Z}^2$, and $(\iota_1\iota_2) \in \{ (++), (+-), (--)\}$,
\begin{align}
\label{symbolseqU}
\begin{split}
|b_{\iota_1\iota_2}(\xi,\eta)|\lesssim\frac{[1+\min(|\eta|,|\xi-\eta|)]^4}{[1+\max(|\eta|,|\xi-\eta|)]^{3/2}}.
\end{split}
\end{align}

\medskip
\item $\mathcal{C}_U$ is a cubic term, i.e. it satisfies for any $t \in [0,T]$
\begin{align}
\label{cubiceqU}
{\| \mathcal{C}_U \|}_{H^{N_0}} \lesssim \oe^3.
\end{align}
\end{itemize}

\end{proposition}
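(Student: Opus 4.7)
\medskip
\noindent\textbf{Proof plan.} The plan is to take the paralinearized system \eqref{WWpara} of Lemma \ref{SysparaLem} as the starting point, substitute the definitions of $\lambda, \ell, \Sigma, m'$, and reorganize $\partial_t U$ into the desired form by using the symbolic calculus of Lemma \ref{PropSym}. First I would verify the leading-order identity \eqref{Uhorel}. Writing $\lambda=|\zeta|+\lambda_{\geq 2}$ and $g+\ell=(g+\sigma|\zeta|^2)+\ell_{\geq 1}$, where the remainders are quadratic in $\nabla h$ and hence of size $\oe^2$, the reduction $T_{\sqrt{g+\ell}}h\approx \sqrt{g+\sigma|\nabla|^2}h$ and $T_\Sigma T_{1/\sqrt{g+\ell}}\omega\approx|\nabla|^{1/2}\omega$ follows from Lemma \ref{PropSym}(ii)--(iii), while $T_{m'}\omega$ is already quadratic since $m'$ is proportional to $\mathrm{div}\,V=O(\oe^2)$.

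\medskip
Next I would compute $\partial_t U$ by differentiating each of the three constituents of \eqref{defscalarunk} and substituting $\partial_t h,\partial_t\omega$ from \eqref{WWpara} (and $\partial_t$ hitting the symbols $\sqrt{g+\ell},\Sigma, 1/\sqrt{g+\ell}, m'$, which are themselves expressible in terms of $h,\nabla h, V$ whose time derivatives we control through Lemma \ref{DNmainLem} and \eqref{dtg}). The goal then is to check that the principal symbols cancel in $\partial_tU+iT_\Sigma U+iT_{V\cdot\zeta}U$. The main cancellations are: (a) $-iT_\Sigma T_{1/\sqrt{g+\ell}}(gh+T_\ell h)=-iT_\Sigma T_{1/\sqrt{g+\ell}}T_{g+\ell}h\approx -iT_\Sigma T_{\sqrt{g+\ell}}h$, canceling the term $iT_\Sigma T_{\sqrt{g+\ell}}h$ in $iT_\Sigma U$ (here the key identity $\{1/\sqrt{g+\ell},\,g+\ell\}=0$ eliminates the subprincipal Poisson bracket); (b) $T_\Sigma T_\Sigma T_{1/\sqrt{g+\ell}}\omega\approx T_{\lambda\sqrt{g+\ell}}\omega\approx T_{\sqrt{g+\ell}}T_\lambda\omega$, canceling the term $T_{\sqrt{g+\ell}}T_\lambda\omega$ coming from $T_{\sqrt{g+\ell}}\partial_t h$; (c) the transport pieces $-T_{\sqrt{g+\ell}}\mathrm{div}(T_V h)$ and $-iT_\Sigma T_{1/\sqrt{g+\ell}}T_V\nabla\omega$ combine with $iT_{V\cdot\zeta}U$ modulo acceptable symmetrization errors, whose leading part is precisely absorbed by the choice of $m'$ so that $iT_{V\cdot\zeta}+iT_{m'\text{-correction}}$ becomes self-adjoint to leading order.

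\medskip
What is left after all these cancellations is a sum of three kinds of quadratic remainders which must be sorted into $\mathcal{N}_U,\mathcal{Q}_U,\mathcal{R}_U$ (plus the cubic $\mathcal{C}_U$). The gaining terms $\mathcal{R}_U$ collect the explicit smoothing contributions $T_{\sqrt{g+\ell}}G_2$ and $iT_\Sigma T_{1/\sqrt{g+\ell}}\Omega_2$ from \eqref{DNquadratic} and \eqref{d_tomegaquadratic}, together with every composition error $E(a,b)$ from Lemma \ref{PropSym}(iii); the symbol bounds \eqref{DNquadraticsym}, \eqref{RemBounds} immediately yield the $3/2$-derivative gain \eqref{symbolseqU}. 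The $\mathcal{Q}_U$ terms collect the ``high--high--low'' pieces where $\partial_t$ hits the $x$-dependent symbols (through $\nabla h, V$) and the resulting product of two paradifferential operators is reorganized using \eqref{lemComp1}; the spectral localization $\chi(|\xi-\eta|/|\xi+\eta|)$ forces the factor $\varphi_{\leq-5}(|\xi-\eta|/|\eta|)$ in \eqref{symbolseqUspec}.

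\medskip
The main obstacle, and the most delicate bookkeeping, is the identification of the precise symbol $\gamma$ in \eqref{gamma} and the reality of $a_{++}$. The leading contribution producing $\mathcal{N}_U$ arises from expanding $\Sigma$ and $V$ to second order in the slope: $V\approx \nabla\phi\approx |\nabla|^{-1/2}\nabla\,\Im U$ and $\lambda^{(1)}(x,\zeta)=|\zeta|-\tfrac{1}{2}(\zeta\cdot\nabla h)^2/|\zeta|+\cdots$, so the quasilinear transport piece and the subprincipal symbol of $\Sigma$ both feed, modulo $\mathcal{R}_U$, into terms with symbol proportional to $\zeta_i\zeta_j/|\zeta|^2$ acting on a second-derivative expression of the low-frequency factor, which is exactly $\gamma$ once one translates back via $\Im U\approx |\nabla|^{1/2}\omega$. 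Reality of $a_{++}$ is then not a coincidence: the Poisson bracket structure that would produce $i$-factors is symmetric in the pairing used to define $A_{++}(\Im U,U)$ (here $\Im U=(U-\bar U)/(2i)$ is real), so the surviving contribution to $a_{++}(\xi,\eta)$ is a real rational function of $\xi,\eta$ (proportional to $[(\xi-\eta)\cdot(\xi+\eta)]$-type factors divided by $|\xi+\eta|^2$), which is the paradifferential reflection of the self-adjointness forced by the Hamiltonian structure and the time-reversal symmetry of \eqref{WWE}. Once this structural identification is made, \eqref{cubiceqU} for $\mathcal{C}_U$ is just the collection of all tri- and higher-linear errors, controlled by \eqref{Paracomp}, \eqref{aux1.1}, \eqref{RemBounds}, and the assumption \eqref{Sob1}.
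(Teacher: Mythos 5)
Your proposal follows essentially the same architecture as the paper: paralinearize via Lemma~\ref{SysparaLem}, use the Weyl symbol calculus of Lemma~\ref{PropSym} to cancel the principal parts, organize the surviving quadratic remainders by derivative gain, and observe that the choice of $m'$ cancels the $\div V$ terms (the paper calls this quantity $Q_U$ and verifies it vanishes identically). The paper works separately with the real variables $H=T_{\sqrt{g+\ell}}h$ and $\Psi=T_\Sigma T_{1/\sqrt{g+\ell}}\omega+T_{m'}\omega$ before recombining as $U=H+i\Psi$, while you propose computing $\partial_tU$ directly; this is a cosmetic difference. Your points~(a) and~(b), including the observation that $\{1/\sqrt{g+\ell},g+\ell\}=0$, are correct and match what is used.

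There are, however, two genuine gaps. First, your identification of the source of $\gamma$ is wrong: you attribute it in part to ``the subprincipal symbol of $\Sigma$,'' but $\Sigma_1$ in \eqref{Exps} is linear in $h$, i.e., in $\Re U$, and therefore cannot produce a term of the form $T_\gamma$ with $\gamma$ linear in $\Im U$; those $\Sigma_1$-contributions are exactly what populates the real symbol $a_{++}$ pairing $\Im U$ with $U$ in $\mathcal{Q}_U$. In the paper the symbol $\gamma$ arises solely from the commutators $i[T_{V\cdot\zeta},T_{\sqrt{g+\ell}}]$ and $i[T_{V\cdot\zeta},T_{|\zeta|^{1/2}}]$: using $V\approx V_1=|\nabla|^{-1/2}\nabla\Im U$ one has the exact identity \eqref{gamprop}, $\{V_1\cdot\zeta,|\zeta|^p\}=p\,\gamma\,|\zeta|^p$, and the resulting Poisson-bracket terms cancel against the explicit $T_\gamma H$ and $T_\gamma\Psi$ placed on the left. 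Second, the reality of $a_{++}$ cannot be obtained by the symmetry heuristic you give (``the paradifferential reflection of Hamiltonian structure and time-reversal''). That viewpoint motivates the result, but the actual proof must track each quadratic contribution individually and check its phase; the paper does so via the bookkeeping rules \eqref{Acceptable2a}--\eqref{Acceptable2b}, which record which combinations of $\{\Im U,\, H,\, \Psi,\, (g-\sigma\Delta)^{1/2}h,\, |\nabla|^{1/2}\omega\}$ paired with which real or $i$-factored symbols land in $\oe^2 H^{N_0+1}_r$, and then applies these rules to every term coming out of \eqref{EqdtH1} and \eqref{EqdtPsi1}. Without such a term-by-term verification (or an a priori structural theorem you have not supplied), the reality of $a_{++}$ --- which is precisely what makes the trivial quartic resonances cancel later on --- is not established.
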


Notice that in Proposition \ref{proeqU} we keep the parameters $g$ and $\sigma$ due to their physical significance. 
Later on we will reduce, without loss of generality, to the case $\sigma = 1$.

\begin{definition}\label{def1}
We denote a finite linear combinations of terms of the form \eqref{eqUquadspec}--\eqref{symbolseqUspec}, with $a_{++}(\xi,\eta) \in \R$ 
by $\oe^2 H^{N_0+1}_{r}$. Similarly, we denote a finite linear combinations of terms of the form \eqref{eqUquad}--\eqref{symbolseqU} by $\oe^2 H_\ast^{N_0+3/2}$. 
\end{definition}

\begin{remark}\label{Remark1} 
We examine the structure of the main equation \eqref{eqU}. In the left-hand side we have 
the ``quasilinear'' part $(\partial_t + i T_{\Sigma} + iT_{V\cdot\zeta})U$. 
Here $\Sigma$ is a real-valued operator of order $3/2$, and $V$ is the velocity vector defined in \eqref{DefBV}. 
In the right-hand side of \eqref{eqU} we have four types of terms: 

\begin{itemize}

\medskip
\item[(1)] An explicit quadratic term $\mathcal{N}_U$ which involves the symbol $\gamma$.
The key property, which is a consequence of the choice of the symbol $m^\prime$,
is an effective gain of one derivative in the region close to the resonant hypersurfaces. 
Indeed, we notice that
\begin{equation}\label{gamma2}
\widetilde{\gamma}(\rho,\zeta) = -\frac{\zeta_i\zeta_j}{|\zeta|^2} \frac{\rho_i \rho_j}{|\rho|^{1/2}} 
  \widehat{\Im U}(\rho)
\end{equation}
and remark that the presence of the angle $(\zeta \cdot \rho)^2$ in this expression will eventually give us 
the derivative gain in the resonant region (see also the related factor $\mathfrak{d}$ and \eqref{ProBulk2sym}).



\medskip
\item[(2)] A quadratic term $\mathcal{Q}_U$, whose symbols always gain one derivative, 
and have a special reality property for the terms where the high frequency is on $U$ (which is the most relevant interaction in the energy estimates)
as opposed to $\bar{U}$.
This reality property is needed in order to take care of the ``trivial resonances'' in the quartic bulk terms.

\medskip
\item[(3)] A ``{\it{strongly semilinear}}'' quadratic term $\mathcal{R}_U$, which gains $3/2$ derivatives; 
due to this large gain, the contribution of this term to the energy increment will be easier to treat.

\medskip
\item[(4)] A semilinear cubic term $\mathcal{C}_U \in \oe^3 H^{N_0}$, whose contribution is straightforward to estimate. 
\end{itemize}
\end{remark}

Let us make one more remark about the term $\mathcal{N}_U$.
\begin{remark}
The structure of $\mathcal{N}_U$ is natural for the equation \eqref{eqU}. Indeed, in order to obtain high-order energy estimates on $U$
we will apply the operator $T_\Sigma$ multiple times to the equation, and derive an equation for $W = (T_\Sigma)^n U$, see \eqref{defW_n}
and Proposition \ref{proeqW}.
Then, terms of the form $T_\gamma W$ will appear when commuting $T_\Sigma$ and $T_{V\cdot \zeta}$.


\end{remark}

For easy reference we collect below some bounds for the various functions and symbols that appear in Proposition \ref{proeqU}.

\begin{lemma}\label{symbbou} (i) We have
\begin{equation}\label{SymApprox0}
\begin{split}
\lambda^p &=|\zeta|^p\Big(1+\frac{p\lambda_1^{(0)}(x,\zeta)}{\vert\zeta\vert} + \oe^2 \mathcal{M}^{0}_{N_0-4}\Big),
\\
(g+\ell)^p& = (g+\sigma\vert\zeta\vert^2)^p \Big(1 -  \frac{p\Lambda^2h}{g + \sigma|\zeta|^2} + \oe^2 \mathcal{M}^{0}_{N_0-4}\Big),
\\
\Sigma&=\Lambda(\zeta)\Big(1+\frac{\Sigma_1(x,\zeta)}{\Lambda(\zeta)} + \oe^2 \mathcal{M}^{0}_{N_0-4}\Big),
\end{split}
\end{equation}
for $p\in[-2,2]$, where $\Lambda(\zeta)=\sqrt{|\zeta|(g+\sigma|\zeta|^2)}$, 
\begin{equation}\label{Exps}
\begin{split}
\lambda_1^{(0)}(x,\zeta) &:=\frac{|\zeta|^2 \Delta h - \zeta_j\zeta_k\partial_j\partial_kh}{2|\zeta|^2}\in\oe \mathcal{M}^{0}_{N_0-4},
\\
\Sigma_1 &:= \Big\{\frac{1}{4}\frac{\Lambda(\zeta)}{|\zeta|} \Big[ \Delta h - \frac{\zeta_i\zeta_j}{|\zeta|^2}\partial_{i}\partial_jh \Big]
  - \frac{1}{2} \frac{|\zeta|}{\Lambda(\zeta)} \Lambda^2 h\Big\} \in \oe \mathcal{M}^{1/2}_{N_0-4}.
\end{split}
\end{equation}

(ii) We have
\begin{equation}\label{ExpFun}
\begin{split}
&V,\,G(h)\phi,\,B\in\oe H^{N_0-1/2},\qquad m'\in\oe\mathcal{M}^{-1}_{N_0-4},\qquad|\nabla|^{1/2}\phi,|\nabla|^{1/2}\omega\in\oe H^{N_0},
\\
&\phi-\omega\in\oe^2H^{N_0+1},\qquad G(h)\phi-|\nabla|\omega\in\oe^2H^{N_0-1/2}.
\end{split}
\end{equation}
Moreover, we have
\begin{equation}
\label{ExpV0}
\begin{split}
&\Re (U)=\sqrt{g+\sigma|\nabla|^2} h+\oe^2 H^{N_0},\qquad\Im (U)=|\nabla|^{1/2}\omega+\oe^2 H^{N_0},\\
\end{split}
\end{equation}
and
\begin{equation}
\begin{split}
\label{ExpV}
&V = V_1 + \oe^2 H^{N_0-1/2}, \qquad V_1 := |\nabla|^{-1/2} \nabla \Im (U),
\\
&m'= m'_1 + \oe^2 \mathcal{M}^{-1}_{N_0-4}, \qquad 
  m_1'(x,\zeta):=\frac{i}{2}\frac{|\nabla|^{3/2}\Im(U)}{\sqrt{g+\sigma|\zeta|^2}},
\end{split}
\end{equation}

(iii) We have
\begin{equation}
\label{SymApprox2}
\begin{split}
\partial_t\sqrt{g+\ell} & = (g+\sigma|\zeta|^2)^{-1/2} [\Delta(g-\sigma\Delta)\omega/2] + \oe^2 \mathcal{M}^{1}_{N_0-6}
  \in \oe \mathcal{M}^{-1}_{N_0-6} + \oe^2 \mathcal{M}^{1}_{N_0-6},
\\
\partial_t \sqrt{\lambda} & = \frac{1}{2\sqrt{|\zeta|}} \partial_t\lambda_1^{(0)} + \oe^2 \mathcal{M}^{1/2}_{N_0-6}
  \in \oe \mathcal{M}^{-1/2}_{N_0-6} + \oe^2 \mathcal{M}^{1/2}_{N_0-6},
\\
\partial_t \Sigma & = \partial_t\Sigma_1 + \oe^2 \mathcal{M}^{3/2}_{N_0-6} \in \oe \mathcal{M}^{1/2}_{N_0-6} + \oe^2 \mathcal{M}^{3/2}_{N_0-6}.
\end{split}
\end{equation}

\end{lemma}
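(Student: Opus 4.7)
I would start from the explicit formula $\lambda^{(1)} = |\zeta|\sqrt{1+|\nabla h|^2 - (\zeta\cdot\nabla h)^2/|\zeta|^2}$; the quantity inside the square root equals $1$ plus a term that is \emph{quadratic} in $h$, so $\lambda^{(1)}/|\zeta|-1\in \oe^2\mathcal{M}^{0}_{N_0-4}$ by Sobolev embedding on $\nabla h\in\oe H^{N_0}$. Consequently the entire linear-in-$h$ part of $\lambda/|\zeta|$ comes from $\lambda^{(0)}$. Linearizing $\lambda^{(0)}$ amounts to evaluating the Poisson bracket at $h=0$: with $\lambda^{(1)}|_{h=0}=|\zeta|$ (independent of $x$) and $\zeta\cdot\nabla h$ linear,
\[
\{|\zeta|,\zeta\cdot\nabla h\}=-(\zeta/|\zeta|)\cdot(\zeta_j\nabla\partial_j h)=-\zeta_j\zeta_k\partial_j\partial_k h/|\zeta|,
\]
and inserting the prefactor $1/(2|\zeta|)$ and adding $\Delta h/2$ reproduces $\lambda_1^{(0)}$. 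Applying $(1+x)^p = 1+px + O(x^2)$ with $x=\lambda_1^{(0)}/|\zeta|\in \oe\mathcal{M}^{-1}_{N_0-4}$ gives the first line of \eqref{SymApprox0}. The second line is identical in spirit: $\ell|_{h=0}=\sigma|\zeta|^2$ and the linearization of $L_{ij}$ is $\sigma\delta_{ij}$ (the correction $\partial_ih\partial_j h/(1+|\nabla h|^2)$ and the factor $1/\sqrt{1+|\nabla h|^2}-1$ are both quadratic), so the linear part of $\ell$ is just $-\Lambda^2 h$. For $\Sigma$, writing $\Sigma=\sqrt{\lambda(g+\ell)}$ and combining the two expansions with $\sqrt{1+x}=1+x/2+O(x^2)$ yields $\Sigma_1 = (1/2)\Lambda(\zeta)[\lambda_1^{(0)}/|\zeta| - \Lambda^2 h/(g+\sigma|\zeta|^2)]$; the identity $\Lambda^2(\zeta)=|\zeta|(g+\sigma|\zeta|^2)$ then matches the stated closed form.

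\textbf{Part (ii): function bounds and the expansion of $U$.} The estimates $G(h)\phi,B,V\in \oe H^{N_0-1/2}$ and \eqref{int0} follow directly from the paralinearization \eqref{DNmainformula} of Lemma \ref{DNmainLem} together with the paradifferential bound \eqref{LqBdTa} and the algebraic definitions \eqref{DefBV}; the identity $\phi-\omega=T_Bh$ combined with \eqref{LqBdTa} and $B\in\oe H^{N_0-1/2}$ gives $\phi-\omega\in\oe^2H^{N_0+1}$, while $G(h)\phi-|\nabla|\omega\in\oe^2H^{N_0-1/2}$ is obtained from \eqref{DNmainformula} by noting that $T_\lambda\omega-|\nabla|\omega=T_{\lambda-|\zeta|}\omega$ is quadratic because $\lambda-|\zeta|\in\oe\mathcal{M}^{0}_{N_0-4}$ and the remaining terms $\mathrm{div}(T_V h), G_2$ are manifestly quadratic. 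The bound $m'\in\oe\mathcal{M}^{-1}_{N_0-4}$ is immediate from its definition and $\mathrm{div}\,V\in\oe H^{N_0-3/2}$. To obtain \eqref{Uhorel} and \eqref{ExpV0}, I substitute the expansions of part~(i) into the definition \eqref{defscalarunk}: since the symbols $\sqrt{g+\sigma|\zeta|^2}$ and $\sqrt{|\zeta|}$ depend only on $\zeta$, the identities $T_{\sqrt{g+\sigma|\zeta|^2}}h=\sqrt{g+\sigma|\nabla|^2}h$ and $T_{\sqrt{|\zeta|}}\omega=|\nabla|^{1/2}\omega$ hold exactly; the composition formula \eqref{lemComp1} reduces $T_\Sigma T_{1/\sqrt{g+\ell}}$ to $T_{\sqrt{\lambda}}$ plus a Poisson-bracket correction which is already of size $\oe^2$ because the leading symbols $\Lambda(\zeta)$ and $1/\sqrt{g+\sigma|\zeta|^2}$ commute under $\{\cdot,\cdot\}$ (both are $x$-independent). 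The expansions \eqref{ExpV} then follow by inserting \eqref{ExpV0} and $\phi-\omega\in\oe^2H^{N_0+1}$ into $V=\nabla\phi-B\nabla h$ and the definition of $m'$.

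\textbf{Part (iii): time derivatives via the water waves equations.} Since $\ell$ depends on $h$ alone, $\partial_t\sqrt{g+\ell}=(\partial_t\ell)/(2\sqrt{g+\ell})$. The leading part of $\ell$ in $h$ is $-\Lambda^2 h$ (by part~(i)), and \eqref{WWpara} gives $\partial_t h = |\nabla|\omega + \oe^2 H^{N_0-1/2}$, so
\[
\partial_t\ell = -\Lambda^2|\nabla|\omega + \oe^2\mathcal{M}^{2}_{N_0-6};
\]
dividing by $2\sqrt{g+\ell}$, replacing $\sqrt{g+\ell}$ by $\sqrt{g+\sigma|\zeta|^2}$ (with a quadratic error), and using $\Lambda^2|\nabla|=-\Delta(g-\sigma\Delta)$ produces the stated expression. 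The computation for $\sqrt{\lambda}$ is analogous: $\lambda$ depends only on $h$, the leading part of $\sqrt{\lambda}$ is $\sqrt{|\zeta|}+\lambda_1^{(0)}/(2\sqrt{|\zeta|})$, and only $\lambda_1^{(0)}$ carries time dependence at this order. For $\Sigma=\sqrt{\lambda}\sqrt{g+\ell}$ the product rule combined with the two previous computations and part (i) isolates $\partial_t \Sigma_1$ as the only linear-order time derivative, with all remaining terms of order $\oe^2$ and of symbol order $3/2$.

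\textbf{Main obstacle.} None of the three parts contains a conceptual difficulty: the whole statement is a bookkeeping exercise that records, term by term, the linear-in-$h$ part and the quadratic remainder of each symbol and its time derivative. The only point that requires care is tracking the symbol orders $\mathcal{M}^{l}_{r}$ correctly through the compositions and through the chain rule: each application of \eqref{propsymb1}, \eqref{lemComp1}, or \eqref{RemBounds} consumes two derivatives of regularity, which is why the remainders land in $\mathcal{M}^{\cdot}_{N_0-4}$ or $\mathcal{M}^{\cdot}_{N_0-6}$ rather than $\mathcal{M}^{\cdot}_{N_0}$, and one must verify that the resulting losses are consistent with the Sobolev regularity available from the hypothesis \eqref{Sob1}.
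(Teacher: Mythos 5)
Your proposal is correct and follows essentially the same route as the paper's (very terse) proof: direct Taylor expansion of the symbols from their definitions, the product/composition bounds of Lemma \ref{PropSym}, Lemma \ref{DNmainLem}, and the identity $\partial_t h = G(h)\phi=|\nabla|\omega + \oe^2 H^{N_0-1/2}$, so you have simply written out the bookkeeping the authors leave implicit. One small wording fix in part (ii): the Poisson-bracket correction $\tfrac{i}{2}T_{\{\Sigma,1/\sqrt{g+\ell}\}}$ has symbol of size $\oe$ (only the $x$-independent leading parts cancel), and it is after applying it to $\omega\in\oe H^{N_0+1/2}$ that the contribution to $U$ becomes $\oe^2 H^{N_0}$, which is what \eqref{Uhorel}--\eqref{ExpV0} require.
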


\begin{proof} 
The bounds in (i) follow directly from definitions, the basic bounds \eqref{propsymb1}, and the bootstrap assumption $h\in \oe H^{N_0+1}$. 
The bounds in (ii) follow using also Lemma \ref{DNmainLem}. The bounds in (iii) follow using also the identity 
$\partial_th = G(h)\phi = |\nabla|\omega + \oe^2 H^{N_0-1/2}$ in \eqref{WWE}.
\end{proof}

\begin{proof}[Proof of Proposition \ref{proeqU}] 

We subdivide the proof in a few steps.

\medskip
{\it Step 1: the symmetrizing variables $(H,\Psi)$ and symbols rules}. 
To diagonalize the principal part of the system \eqref{WWpara} we define the variables
\begin{equation}
\label{HPsi}
H := T_{\sqrt{g+\ell}}h, \qquad \Psi := T_{\Sigma}T_{1/\sqrt{g+\ell}}\omega+T_{m^\prime}\omega,
\end{equation}
where $m^\prime$ is the symbol in \eqref{defscalarunk}, so that the new variable is $U = H+i\Psi$. In view of Lemma \ref{symbbou} we have the following relations between the variables
\begin{align}
 \label{unkrelations}
\begin{split}
& H = \Re(U) + \oe^2 H^{N_0}, \qquad \sqrt{g -\sigma\Delta} h = \Re(U) + \oe^2 H^{N_0},
\\
& \Psi = \Im(U )+ \oe^2 H^{N_0}, \qquad |\nabla|^{1/2}\omega = \Im(U) + \oe^2 H^{N_0}.
\end{split}
\end{align}

As a consequence, if $s=s(\xi,\eta)$ is a symbol satisfying \eqref{symbolseqU}
and we denote by $S$ the associated bilinear multiplier, then
\begin{equation}
\label{Acceptable}
S[T_1,T_2] \in \oe^2 H^{N_0+3/2}_\ast + \oe^3 H^{N_0}\,\,\text{ for any }\,\,T_1,T_2\in\{U,\bar{U},H,\Psi,(g-\sigma\Delta)^{1/2}h,|\nabla|^{1/2}\omega\}.
\end{equation}
compare the definition \ref{def1}.

If $r=r(\xi,\eta)$ is a real-valued symbol satisfying the estimate in \eqref{symbolseqUspec}
and we denote by $R$ the associated bilinear multiplier, then
\begin{equation}
\label{Acceptable2a}
\begin{split}
R[Z_1,Z_2] \in \oe^2 H^{N_0+1}_r + \oe^3 H^{N_0}
  \quad & \text{ for any } \quad Z_1 \in\{ \Im U, |\nabla|^{1/2}\omega, \Psi\} 
  \\ & \text{ and } \quad Z_2 \in \{U, \, H, \, i\Psi, \, (g-\sigma\Delta)^{1/2}h, \, i|\nabla|^{1/2}\omega\}.
\end{split}
\end{equation}

If $q=q(\xi,\eta)$ is a (general) symbol satisfying the estimate in \eqref{symbolseqUspec}
and we denote by $Q$ the associated bilinear multiplier, then
\begin{equation}
\label{Acceptable2b}
\begin{split}
Q[Z_1,\bar{U}] \in \oe^2 H^{N_0+1}_r + \oe^3 H^{N_0}
  \quad & \text{ for any } \quad Z_1 \in \{U,\, \bar{U}, \, H, \, \Psi, \, (g-\sigma\Delta)^{1/2}h, \, |\nabla|^{1/2}\omega\},
\end{split}
\end{equation}
compare the Definition \ref{def1}.
 
\medskip
{\it Step 2: The evolution equation for $H$}.
We first show that
\begin{align}
\label{EqdtH}
\begin{split}
\partial_t H - T_{\Sigma} \Psi+iT_{V\cdot\zeta}H = -T_\gamma H -\dfrac{1}{2}T_{\sqrt{g+\ell} \, \mathrm{div} V}h - 
  T_{ m^\prime\Sigma}\omega + \oe^2 H^{N_0+1}_r + \oe^2 H^{N_0+3/2}_\ast + \oe^3 H^{N_0}.
\end{split}
\end{align}
Indeed, using the first equation in \eqref{WWpara}, the identity 
$\div T_V h = \tfrac{1}{2} T_{\div V} h + iT_{V\cdot \zeta}h$ 
(see the definition in \eqref{Taf}), and the definitions in \eqref{HPsi}, 
we can arrange the terms in \eqref{EqdtH} as follows:
\begin{equation}\label{EqdtH1}
\begin{split}
\partial_tH-&T_\Sigma\Psi+iT_{V\cdot\zeta}H+T_\gamma H
 + \frac{1}{2}T_{\sqrt{g+\ell} \, \div V}h+T_{ m^\prime\Sigma}\omega
\\
&= (T_{\sqrt{g+\ell}}T_{\lambda}-T_{\Sigma}T_{\Sigma}T_{1/\sqrt{g+\ell}})\omega - (T_\Sigma T_{m^\prime}-T_{ m^\prime\Sigma})\omega
\\
& + i(T_{V\cdot\zeta}T_{\sqrt{g+\ell}}h-T_{\sqrt{g+\ell}}T_{V\cdot\zeta}h-iT_\gamma T_{\sqrt{g+\ell}}h)
\\
& -\frac{1}{2}(T_{\sqrt{g+\ell}}T_{\div V}-T_{\sqrt{g+\ell} \, \div V})h + T_{\partial_t\sqrt{g+\ell}}h+ T_{\sqrt{g+\ell}}G_2
  + T_{\sqrt{g+\ell}} \oe^3 H^{N_0+1}.
\end{split}
\end{equation}
We now look at each of the the terms in the right-hand side of the equality above.

For the first term we use Lemma \ref{PropSym} to write
\begin{align}
\nonumber
\big( T_{\sqrt{g+\ell}} &T_{\lambda} - T_{\Sigma}T_{\Sigma}  T_{1/\sqrt{g+\ell}} \big) \omega
\\
\label{EqdtH2}
& = \Big( T_{ \lambda \sqrt{g+\ell}} + \frac{i}{2} T_{ \{ \sqrt{g+\ell}, \lambda \} }
  -\big( T_{\Sigma^2/\sqrt{g+\ell}} + \frac{i}{2} T_{ \{ \Sigma^2, 1/\sqrt{g+\ell} \} } \big) \Big) \omega 
\\
\label{EqdtH3}
& + [E(\sqrt{g+\ell},\lambda)-E(\Sigma,\Sigma)T_{1/\sqrt{g+\ell}} - E(\Sigma^2,1/\sqrt{g+\ell})] \omega.
\end{align}
Since
\begin{equation*}
\begin{split}
\lambda\sqrt{g+\ell}=\Sigma^2/\sqrt{g+\ell}\,\,\text{ and }\,\,\{\sqrt{g+\ell},\lambda\}=\{\Sigma^2,1/\sqrt{g+\ell}\}\,\,\text{ on }\,\,\T^2\times(\R^2\setminus B(1/2)),
\end{split}
\end{equation*}
the expression in \eqref{EqdtH2} vanishes.
Using the expansions in \eqref{SymApprox0} and \eqref{RemBounds} in Lemma \ref{PropSym}, we see that the expression in \eqref{EqdtH3} can be rewritten as 
\begin{equation}\label{Errors1}
\begin{split}
&\Big[ E(\sqrt{g + \sigma|\zeta|^2},\lambda_1^{(0)}) + E(-\frac{\Lambda^2 h}{2\sqrt{g+\sigma |\zeta|^2}},|\zeta|)\Big]\omega- [E(\Lambda,\Sigma_1) + E(\Sigma_1,\Lambda)](g-\sigma\Delta)^{-1/2}\omega
\\
& - \Big[E(\Lambda^2, \frac{\Lambda^2 h}{2(g+\sigma|\zeta|^2)^{3/2}}) + 2E(\Lambda\Sigma_1,\frac{1}{\sqrt{g+\sigma|\zeta|^2}})\Big]\omega+ \oe^3 H^{N_0}.
\end{split}
\end{equation}
It follows from \eqref{RemBounds} that all the quadratic terms in \eqref{Errors1} are in $\oe^2 H^{N_0+3/2}$. 
To show that they are in fact in $\oe^2 H^{N_0+3/2}_\ast + \oe^3H^{N_0}$ we need to express them in terms 
of $U$ and $\overline{U}$. 
For this we use the identities \eqref{ExEab}--\eqref{ExEab2}. Using also \eqref{Acceptable} (with $T_1=(g-\sigma\Delta)^{1/2}h$ 
and $T_2=|\nabla|^{1/2}\omega$) it is easy to see that all the terms in \eqref{Errors1} are acceptable $\oe^2 H^{N_0+3/2}_\ast + \oe^3 H^{N_0}$ terms.
For example, in view of \eqref{ExEab} and \eqref{Exps}, the first term in \eqref{Errors1} is given by
\begin{equation*}
\begin{split}
&\mathcal{F}\{E(\sqrt{g + \sigma|\zeta|^2},\lambda_1^{(0)}) \omega\}(\xi)= \frac{1}{4\pi^2}\sum_{\eta\in\Z^2\setminus\{0\}} b(\xi,\eta)\mathcal{F}\{(g-\sigma\Delta)^{1/2}h\}(\xi-\eta)\widehat{|\nabla|^{1/2}\omega}(\eta),\\
&b(\xi,\eta):=\chi(\frac{\vert\xi-\eta\vert}{\vert\xi+\eta\vert})\Big[a(\xi)-a(\frac{\xi+\eta}{2})-\frac{\xi-\eta}{2}\cdot\nabla a(\frac{\xi+\eta}{2}) \Big]\frac{[|\xi|^2-|\eta|^2]^2-|\xi+\eta|^2|\xi-\eta|^2}{2|\xi+\eta|^2(g+\sigma|\xi-\eta|^2)^{1/2}|\eta|^{1/2}},
\end{split}
\end{equation*}
where $a(\zeta):=\sqrt{g + \sigma|\zeta|^2}$. 
This is acceptable, since the multiplier $b$ satisfies \eqref{symbolseqU}. The other terms can be written in a similar way. 

With $m'_1$ is as in \eqref{ExpV}, we write
\begin{equation*}
\begin{split}
(T_\Sigma T_{m^\prime}-T_{ m^\prime\Sigma})\omega&=\frac{i}{2}T_{\{\Sigma,m'\}}\omega
  + E(\Sigma,m')\omega = \frac{i}{2}T_{\{\Lambda(\zeta),m'_1\}}\omega + E(\Lambda(\zeta),m'_1)\omega + \oe^3 H^{N_0}.
\end{split}
\end{equation*}
Using the formula \eqref{ExEab} for $E(a,b)$, we see that the term $E(\Lambda(\zeta),m'_1)\omega$
is an acceptable $\oe^2 H^{N_0+3/2}_\ast + \oe^3 H^{N_0}$ term.
Using \eqref{unkrelations}, Lemma \ref{PropSym}, and the formula \eqref{ExpV} for $m'_1$, we write
\begin{align}
\label{Errors1.5}
\frac{i}{2}T_{\{\Lambda(\zeta),m'_1\}}\omega = \frac{-i}{2} T_{\nabla_\ze \Lambda(\zeta) \cdot \nabla_x m'_1} |\nabla|^{-1/2} \Im U + \oe^3 H^{N_0}
= P[\Im U, U - \bar{U}] + \oe^3 H^{N_0},
\end{align}
for a real-valued symbol $p$ satisfying the bound \eqref{symbolseqUspec}. 
According to \eqref{Acceptable2a} and \eqref{Acceptable2b} this is an acceptable term.


Next we look at the second line in the right-hand side of \eqref{EqdtH1}, and write
\begin{equation}
\label{Errors2}
\begin{split}
& i\big(T_{V\cdot\zeta}T_{\sqrt{g+\ell}}h - T_{\sqrt{g+\ell}}T_{V\cdot\zeta}h - iT_\gamma T_{\sqrt{g+\ell}}h\big) = 
  - (T_{\{V\cdot\zeta,\sqrt{g+\ell}\}} - T_{\gamma\sqrt{g+\ell}})h
\\
& + i E(V\cdot\zeta,\sqrt{g+\ell})h - i E(\sqrt{g+\ell},V\cdot\zeta)h + \frac{i}{2}T_{\{\gamma,\sqrt{g+\ell}\}}h + E(\gamma,\sqrt{g+\ell})h.
\end{split}
\end{equation}
Using \eqref{SymApprox0} and \eqref{ExpV} we calculate
\begin{align*}
\{V\cdot\zeta,\sqrt{g+\ell}\} =\frac{\zeta_j \partial_k V_j(x)\cdot \sigma\zeta_k}{\sqrt{g+\sigma|\zeta|^2}}
  +\oe^2\mathcal{M}^1_{N_0-4}
  = \frac{\sigma\zeta_j\zeta_k\cdot \partial_j\partial_k|\nabla|^{-1/2}\Im(U)(x)}{\sqrt{g+\sigma|\zeta|^2}} 
  + \oe^2\mathcal{M}^1_{N_0-4}.
\end{align*}
It follows that, see also the definition \eqref{gamma} of $\gamma$,
\begin{align*}
\{V\cdot\zeta,\sqrt{g+\ell}\} - \gamma \sqrt{g+\ell} = - \frac{g \, \gamma(x,\zeta)}{\sqrt{g+\sigma|\zeta|^2}} + \oe^2\mathcal{M}^1_{N_0-4},
\end{align*}
and therefore, also in view of \eqref{unkrelations}, $(T_{\{V\cdot\zeta,\sqrt{g+\ell}\}} - T_{\gamma\sqrt{g+\ell}})h 
\in \oe^2H^{N_0+3/2}_\ast + \oe^3H^{N_0}$. 

To verify that the terms in the second line of \eqref{Errors2} are also acceptable contributions, we first write them in the form
\begin{align*}
E(i V_1\cdot\zeta,\sqrt{g+\sigma|\zeta|^2})h - E(\sqrt{g+\sigma|\zeta|^2}, i V_1\cdot\zeta)h 
  + \frac{i}{2}T_{\{\gamma,\sqrt{g+\sigma|\zeta|^2}\}}h 
\\ + E(\gamma,\sqrt{g+\sigma|\zeta|^2})h + \oe^3H^{N_0}.
\end{align*}
Using the formula for $V_1$ in \eqref{ExpV}, the formulas \eqref{ExEab}--\eqref{ExEab2} and \eqref{Acceptable2a}, we see that
\begin{equation*}
E(i V_1\cdot\zeta,\sqrt{g+\sigma|\zeta|^2})h, \quad  E(\sqrt{g+\sigma|\zeta|^2}, iV_1\cdot\zeta)h \in \oe^2 H^{N_0+1}_r + \oe^3H^{N_0}.
\end{equation*}
Similarly, from the formula \eqref{gamma} we have $iT_{\{\gamma,\sqrt{g+\sigma|\zeta|^2}\}}h  \in \oe^2 H^{N_0+1}_r + \oe^3H^{N_0}$. Indeed, since
$i \{\gamma,\sqrt{g+\sigma|\zeta|^2}\} = i\nabla_x \gamma \cdot \sigma \frac{\zeta}{\sqrt{g+\sigma|\zeta|^2}}$ we have
\begin{align}
\label{rep1}
\begin{split}
& \mathcal{F} \big( i T_{\{\gamma,\sqrt{g+\sigma|\zeta|^2}\}}h \big)(\xi)= \frac{i \sigma}{4\pi^2} \sum_{\eta\in\mathbb{Z}^2} 
   \widetilde{\gamma}\big(\xi-\eta,\frac{\xi+\eta}{2}\big)\frac{i (\xi-\eta)\cdot (\xi+\eta)/2}{\sqrt{g+(\sigma/4)|\xi+\eta|^2}} 
  \chi\Big(\frac{\xi-\eta}{|\xi+\eta|}\Big) \widehat{h}(\eta)
  \\
  & = \frac{\sigma}{8\pi^2} \sum_{\eta\in\mathbb{Z}^2} \chi\Big(\frac{\xi-\eta}{|\xi+\eta|}\Big) 
  \frac{(|\xi|^2 - |\eta|^2)^3}{\sqrt{g+(\sigma/4)|\xi+\eta|^2}}
  \frac{1}{|\xi+\eta|^2} \frac{1}{|\xi-\eta|^{1/2}}\ \widehat{\Im U}(\xi-\eta) \widehat{h}(\eta),
\end{split}
\end{align}
having used \eqref{gamma2}. We then see that the bilinear symbol in \eqref{rep1} is real-valued. 
Since $\sqrt{g-\sigma\Delta}h = \Re U + \overline{\varepsilon}^2H^{N_0}$ (see \eqref{unkrelations}), 
the expression in \eqref{rep1} is of the claimed form. 
See also \eqref{Acceptable2a}. Notice that even though some of the symbols may appear purely imaginary at first, 
they actually have the claimed reality property, consistently with \eqref{EqdtH} and Definition \ref{def1}, 
after we express all the relevant quantities in terms of $U$ and $\Im U$.

Using \eqref{ExEab}, \eqref{gamma}, \eqref{unkrelations}, we see that, according to \eqref{Acceptable},
\begin{equation*}
E(\gamma,\sqrt{g+\sigma|\zeta|^2})h \in \oe^2 H^{N_0+3/2}_\ast + \oe^3H^{N_0}.
\end{equation*}
Therefore all the terms in the second line of \eqref{Errors2} give acceptable contributions.

For the first two terms in the last line in \eqref{EqdtH1}, we use Lemmas \ref{symbbou} and \ref{PropSym} to obtain
\begin{equation*}
\big(T_{\sqrt{g+\ell}}T_{\div V}-T_{\div V \cdot \sqrt{g+\ell}} \big)h 
  = \big(\frac{i}{2}T_{\{\sqrt{g + \sigma|\zeta|^2}, \div V_1\}}
  + E(\sqrt{g + \sigma |\zeta|^2},\div V_1) \big)h + \oe^3 H^{N_0}.
\end{equation*}
Then, the formulas \eqref{ExpV} and \eqref{Acceptable2a} 
give that $iT_{\{\sqrt{g + \sigma|\zeta|^2}, \div V_1\}}h \in \oe^2 H^{N_0+1}_r + \oe^3 H^{N_0}$ (compare with \eqref{rep1}). 
Moreover, from \eqref{ExEab} we can see that 
$E(\sqrt{g + \sigma |\zeta|^2},\div V_1)h \in  \oe^2 H^{N_0+3/2}_\ast$.

The remaining terms in \eqref{EqdtH1} are (see \eqref{SymApprox2})
\begin{align*}
& T_{\partial_t\sqrt{g+\ell}}h = T_{\frac{1}{2}\frac{\Delta(g-\sigma\Delta)\omega}{\sqrt{g + \sigma|\zeta|^2}}}h + \oe^3 H^{N_0},
\\
& T_{\sqrt{g+\ell}}G_2 = T_{\sqrt{g + \sigma|\zeta|^2}}G_2 + \oe^3 H^{N_0}, 
\\
& T_{\sqrt{g+\ell}} \oe^3 H^{N_0+1} = \oe^3 H^{N_0}.
\end{align*}
According to \eqref{Acceptable2a}-\eqref{Acceptable2b} the first expression belongs to 
$\oe^2 H^{N_0+1}_r + \oe^3 H^{N_0}$.
Using the properties of $G_2$ in \eqref{DNquadratic}-\eqref{DNquadraticsym}, 
the term in the second line above is in $\oe^2 H^{N_0+3/2}_\ast + \oe^3 H^{N_0}$.
We have therefore obtained the desired identity \eqref{EqdtH}.

\medskip
{\it Step 3: The evolution equation for $\Psi$}.
We now show that the following equation holds:
\begin{equation}
\label{EqdtPsi}
 \begin{split}
\partial_t \Psi + T_{\Sigma } H + iT_{V\cdot\zeta}\Psi 
  &= -\frac{1}{2}T_{\gamma}\Psi- T_{m^\prime(g+\ell)}h +\dfrac{1}{2}T_{\sqrt{\lambda} \, \div V}\omega\\
  &+ \oe^2 i H^{N_0+1}_r + \oe^2 H^{N_0+3/2}_\ast + \oe^3 H^{N_0}.
\end{split}
\end{equation}
Note the extra factor of $i$ in front of $H^{N_0+1}_r$, consistent with our main variable being $U=H+i\Psi$.
 
Using the definition of $\Psi$ in \eqref{HPsi}, 
the second equation in \eqref{WWpara}, and the identity $T_V\nabla\omega = i T_{V\cdot\ze}\omega-(1/2)T_{\div V}\omega$, we compute
\begin{equation}\label{EqdtPsi1}
\begin{split}
\partial_t\Psi + T_{\Sigma}H + &iT_{V\cdot\zeta}\Psi +\frac{1}{2}T_{\gamma}\Psi+T_{m^\prime(g+\ell)}h-\frac{1}{2}
  T_{\sqrt{\lambda} \div V}\omega
\\
& = (T_{\Sigma}T_{\sqrt{g+\ell}}-T_{\Sigma}T_{1/\sqrt{g+\ell}}T_{g+\ell})h+(T_{m^\prime(g+\ell)}-T_{m^\prime}T_{g+\ell})h
\\
& + i\big(T_{V\cdot\zeta}\Psi-\frac{i}{2}T_{\gamma}\Psi-(T_\Sigma T_{1/\sqrt{g+\ell}} + T_{m^\prime})T_{V\cdot\zeta}\omega\big)
\\
& + \frac{1}{2}(T_{\Sigma}T_{1/\sqrt{g+\ell}}T_{\div V}-T_{\sqrt{\lambda}\, \div V })\omega + \frac{1}{2}T_{m^\prime}T_{\div V}\omega
\\
& + [\partial_t,T_{\Sigma}T_{1/\sqrt{g+\ell}} + T_{m^\prime}]\omega + ( T_{\Sigma}T_{1/\sqrt{g+\ell}} + T_{m^\prime} ) (\Omega_2 + \oe^3 H^{N_0+1}).
\end{split}
\end{equation}
Again, we proceed to verify that all the terms in the right-hand side are acceptable remainders.

For the terms in the first line, using Lemma \ref{PropSym} and the expansions in \eqref{SymApprox0}, we have
\begin{align*}
\big(T_{\Sigma}  T_{\sqrt{g+\ell}} &-T_{\Sigma}T_{1/\sqrt{g+\ell}} T_{g+\ell} \big)h = -T_\Sigma E(1/\sqrt{g+\ell},g+\ell)h
\\
& = -T_{\varphi_{>-4}(\zeta)\Lambda(\zeta)} \big[ E(\frac{\Lambda^2h}{2(g+\sigma |\zeta|^2)^{3/2}}, g + \sigma |\zeta|^2) 
  - E( 1/\sqrt{g + \sigma |\zeta|^2}, \Lambda^2h)\big] h + \oe^3 H^{N_0}.
\end{align*}
Using also \eqref{ExEab}--\eqref{ExEab2}, this gives acceptable contributions in $\oe^2 H^{N_0+3/2}_\ast + \oe^3 H^{N_0}$. 
In addition, using Lemma \ref{symbbou}, we have
\begin{equation*}
\begin{split}
(T_{m^\prime}T_{g+\ell}-T_{m^\prime(g+\ell)})h = \frac{i}{2}T_{\{m^\prime,g+\ell\}}h + E(m^\prime,g+\ell)h 
\\ = i\sigma T_{\zeta\cdot\nabla_x m'_1}h + E(m'_1,g+\sigma|\zeta|^2)h + \oe^3 H^{N_0}.
\end{split}
\end{equation*}
From the explicit formula \eqref{ExpV} for $m'_1 \in \mathcal{M}^{-1}_{N_0-4}$, 
and in view of \eqref{Acceptable2a}-\eqref{Acceptable2b} we see that
\begin{equation*}
T_{\zeta\cdot\nabla_x m'_1}h \in \oe^2 H^{N_0+1}_r + \oe^3 H^{N_0},
\end{equation*}
as desired (compare with \eqref{rep1}).
Moreover, from \eqref{ExEab2}, \eqref{unkrelations}, and \eqref{Acceptable}
we have that $E(m'_1, g+\sigma|\zeta|^2)h$ is also an acceptable contribution in 
$\oe^2 H^{N_0+3/2}_\ast + \oe^3 H^{N_0}$.

For the terms in the second line of the right-hand side of \eqref{EqdtPsi1} 
we use Lemma \ref{symbbou} and \eqref{HPsi} to write
\begin{align*}
i \big( T_{V\cdot\zeta}\Psi- \frac{i}{2}&T_{\gamma}\Psi-(T_\Sigma T_{1/\sqrt{g+\ell}}+T_{m^\prime})T_{V\cdot\zeta}\omega \big)
\\
& = i \big(T_{V\cdot\zeta}T_\Sigma T_{1/\sqrt{g+\ell}} - T_\Sigma T_{1/\sqrt{g+\ell}}T_{V\cdot\zeta} \big)\omega
  + \frac{1}{2}T_{\gamma}T_\Sigma T_{1/\sqrt{g+\ell}}\omega + \oe^3H^{N_0}
\\
& = i \big(T_{V_1\cdot\zeta}T_{|\zeta|^{1/2}} - T_{|\zeta|^{1/2}}T_{V_1\cdot\zeta}\big)\omega
  + \frac{1}{2}T_{\gamma}T_{|\zeta|^{1/2}}\omega + \oe^3H^{N_0}.
\end{align*}
Using Lemma \ref{PropSym} this equals
\begin{align}
\label{Errors5}
\begin{split}
-T_{\{V_1\cdot\zeta, |\zeta|^{1/2}\}}\omega + \frac{1}{2} T_{\gamma |\zeta|^{1/2}}\omega 
  + i E(V_1\cdot\zeta,|\zeta|^{1/2})\omega - i E(|\zeta|^{1/2},V_1\cdot\zeta)\omega 
\\
+ \frac{i}{4} T_{\{\gamma, |\zeta|^{1/2}\}}\omega + \frac{1}{2} E(\gamma,|\zeta|^{1/2})\omega + \oe^3H^{N_0}. 
\end{split}
\end{align}
Using the definitions \eqref{ExpV} and \eqref{gamma}, we notice that for $p\in[0,1]$
\begin{equation}\label{gamprop}
\{V_1\cdot\zeta,|\zeta|^{p}\}=\gamma\cdot p|\zeta|^{p} \qquad \mbox{ on } \qquad \T^2\times(\R^2\setminus B(1/2)),
\end{equation}
and therefore the first two terms in \eqref{Errors5} cancel out.
Using \eqref{ExEab}--\eqref{ExEab2}, the formula for $V_1$ in \eqref{ExpV},
and the rules \eqref{Acceptable2a}-\eqref{Acceptable2b}, we see that
\begin{align*}
E(V_1\cdot\zeta,|\zeta|^{1/2})\omega - E(|\zeta|^{1/2},V_1\cdot\zeta)\omega \in \oe^2 H^{N_0+1}_r + \oe^3 H^{N_0},
\end{align*}
which is our desired property. Using also \eqref{gamma} we see that
\begin{align*}
i T_{\{\gamma, |\zeta|^{1/2}\}}\omega = \frac{i}{2} T_{\zeta \cdot \nabla_x \gamma \, |\zeta|^{-3/2}} |\nabla|^{-1/2}\Im U + \oe^3 H^{N_0}
  \in \oe^2 H^{N_0+1}_r + \oe^3 H^{N_0}.
\end{align*}
For the last term in \eqref{Errors5}, we use \eqref{ExEab2} 
to see that $E(\gamma,|\zeta|^{1/2})\omega \in \oe^2 H^{N_0+3/2}_\ast + \oe^3 H^{N_0}$. 
Thus all the terms in the second line of the right-hand side of \eqref{EqdtPsi1} are acceptable contributions.

Using Lemma \ref{PropSym} and the definitions, we see that the terms on the third line 
in the right-hand side of \eqref{EqdtPsi1} are acceptable cubic term in $\oe^3 H^{N_0}$.

For the terms in the last line in \eqref{EqdtPsi1}, we observe that
\begin{align}
\nonumber 
& [\partial_t,T_\Sigma T_{1/\sqrt{g+\ell}}+T_{m'}]\omega=T_{\partial_t\Sigma}T_{1/\sqrt{g+\ell}}\omega 
  + T_{\Sigma}T_{\partial_t(1/\sqrt{g+\ell})}\omega + T_{\partial_tm'}\omega
\\
\label{Errors6} 
& = T_{\partial_t\Sigma_1}T_{(g+\sigma|\zeta|^2)^{-1/2}}\omega 
  - \Lambda T_{\frac{\Delta(g-\sigma\Delta)\omega}{2(g+\sigma|\zeta|^2)^{3/2}}} \omega 
  + i T_{\frac{\partial_t(\mathrm{div}V)}{2(g + \sigma|\zeta|^2)^{1/2}}}\omega + \oe^3 H^{N_0},
\end{align}
where we used \eqref{SymApprox0} and \eqref{SymApprox2}. 
Using Lemma \ref{PropSym}, the formula for $\Sigma_1$ in \eqref{Exps}, $\partial_t h =|\nabla|\omega + \oe^2 H^{N_0-1/2}$
(see Lemmas \ref{SysparaLem} and \ref{DNmainLem}), 
and \eqref{ExEab2} we have
\begin{align*}
T_{\partial_t\Sigma_1} T_{(g+\sigma|\zeta|^2)^{-1/2}}\omega = T_{\partial_t\Sigma_1 \, (g+\sigma|\zeta|^2)^{-1/2}}\omega
  + \oe^2 H^{N_0+3/2}_\ast + \oe^3 H^{N_0}
  \\ 
  \in i \oe^2 H^{N_0+1}_r + \oe^2 H^{N_0+3/2}_\ast + \oe^3 H^{N_0}.
\end{align*}
The remaining terms in \eqref{Errors6} are easily seen to be in $\oe^2 H^{N_0+3/2}_\ast + \oe^3 H^{N_0}$, 
since $\partial_t V = -\nabla(g+\sigma|\nabla|^2)h + \oe^2 H^{N_0-2}$
(see Lemma \ref{SysparaLem} and Lemma \ref{DNmainLem}).

Finally, using the property \eqref{d_tomegaquadratic} in Lemma \ref{SysparaLem}, and the usual \eqref{unkrelations} and \eqref{Acceptable},
we have
\begin{equation*}
\begin{split}
& (T_{\Sigma}T_{1/\sqrt{g+\ell}} + T_{m^\prime})(\Omega_2) = \oe^2 H^{N_0+3/2}_\ast+ \oe^3 H^{N_0},
\\
& (T_{\Sigma}T_{1/\sqrt{g+\ell}} + T_{m^\prime})(\oe^3 H^{N_0+1}) = \oe^3 H^{N_0}.
\end{split}
\end{equation*}
Therefore, all the terms in the right-hand side of \eqref{EqdtPsi1} are acceptable, and \eqref{EqdtPsi} is verified.

\medskip
{\it Step 4}.
From the definition of $U = H+i\Psi$, see \eqref{defscalarunk} and \eqref{HPsi}, 
combining the equations \eqref{EqdtH} and \eqref{EqdtPsi}, and using \eqref{unkrelations}, we see that
\begin{align*}
\begin{split}
& \partial_t U + i T_{\Sigma} U +iT_{V\cdot\zeta}U= Q_U+ \mathcal{N}_U + \oe^2H^{N_0+1}_r + \oe^2 H^{N_0+3/2}_\ast + \oe^3 H^{N_0},
\\
& Q_U :=(-\frac{1}{2}T_{\sqrt{g+\ell}\, \div V}-iT_{m^\prime (g+\ell)})h + (-T_{m^\prime\Sigma}+\frac{i}{2}T_{\sqrt{\lambda}\, \div V})\omega \equiv 0,
\\
& \mathcal{N}_U := -T_{\gamma}H-\frac{i}{2}T_{\gamma}\Psi=-\frac{1}{4}T_{\gamma}(3U+\overline{U})+ \oe^3 H^{N_0},
\end{split}
\end{align*}
where $Q_U$ vanishes in view of our choice of $m^\prime$, see \eqref{defscalarunk}. 
Thus $\mathcal{N}_U$ has the structure claimed in the statement, according to Definition \ref{def1}.
\end{proof}

\subsection{Higher order derivatives}\label{higherder}

To derive high order energy estimates for $U$, and hence for $h$ and $|\nabla|^{1/2}\phi$,
we need to apply derivatives to the equation \eqref{eqU}.
We will consider differentiated variables of the form 
\begin{align}
 \label{defW_n}
W_n := (T_\Sigma)^n U\in\oe H^{N_0-3n/2}, \quad n\in[0,2N_0/3],
\end{align}
for $U$ as in \eqref{defscalarunk} and $\Sigma$ as in \eqref{defSigma}. 
We have the following consequence of Proposition \ref{proeqU}:

\begin{proposition}\label{proeqW} 

(i) Recalling the assumptions \eqref{Sob1}, for any $t\in [0,T]$ we have
\begin{equation}
\label{EnEquiv}
\big[{\|\langle \nabla \rangle h(t)\|}_{H^{N_0}} + \| \,|\nabla|^{1/2} \phi(t) \|_{H^{N_0}}\big]\approx \sum_{n\in[0,2N_0/3]}\|W_n(t)\|_{L^2}.
\end{equation}

(ii) With $\gamma$ as in \eqref{gamma} and $n\in[0,2N_0/3]$, we have
\begin{align}
\label{eqW}
\partial_t W_n + i T_\Sigma W_n + iT_{V\cdot\zeta}W_n =  T_\gamma (a_n W_n + b_n\overline{W_n}) + \mathcal{A}_{W_n}
  + \mathcal{B}_{W_n} + \mathcal{C}_{W_n},
\end{align}
for some numbers $a_n,b_n \in \R$ 
and $\gamma$ as in \eqref{gamma}, where:

\setlength{\leftmargini}{2em}
\begin{itemize}

\item The quadratic terms $\mathcal{A}_{W_n}$ have the form
\begin{align}
\label{eqW1}
& \mathcal{A}_{W_n} = A_{++}^n(\Im U,U) + A_{+-}^n(U,\bar{U}) + A_{--}^n(\bar{U},\bar{U}),
\end{align}
where the the symbols $a_{\iota_1\iota_2}^n$ of the bilinear operators $A_{\iota_1\iota_2}^{n}$ satisfy,
for all $\xi,\eta \in \mathbb{Z}^2$ and $(\iota_1\iota_2) \in \{ (++), (+-), (--)\}$, the bounds
\begin{align}
\label{symbolseqWspec}
\begin{split}
|a_{\iota_1\iota_2}^n(\xi,\eta)| & \lesssim \langle\eta\rangle^{3n/2-1} \langle\xi-\eta\rangle^4 \varphi_{\leq -5}(|\xi-\eta|/|\eta|),
\end{split}
\end{align}
and the additional ``reality condition'' $a_{++}^n(\xi,\eta) \in \R$.

\medskip
\item The quadratic terms $\mathcal{B}_{W_n}$ have the form
\begin{align}
\label{eqWSsemi}
\mathcal{B}_{W_n} = \sum_{\iota_1\iota_2 \in \{+,-\}} S_{\iota_1\iota_2}^{n}[U_{\iota_1},U_{\iota_2}] ,
\end{align}
where $U_+:=U$, $U_-:=\overline{U}$, and the symbols $s_{\iota_1\iota_2}^n$ of the bilinear operators $S_{\iota_1\iota_2}^{n}$ satisfy
\begin{align}
\label{eqWsymbols}
|s_{\iota_1,\iota_2}^n(\xi,\eta)| \lesssim \max(\langle\eta\rangle,\langle\xi-\eta\rangle)^{(3n/2-3/2)} \min(\langle\eta\rangle,\langle\xi-\eta\rangle)^{4}.
\end{align}
In other words, these terms gain $3/2$ derivatives over the regularity of $W_n$.

\medskip
\item The cubic terms $\mathcal{C}_{W_n}$ satisfy the bounds
\begin{align}\label{eqWrem}
\| \mathcal{C}_{W_n} \|_{H^{N_0-3n/2}} \lesssim \oe^3.
\end{align} 

\end{itemize}

\end{proposition}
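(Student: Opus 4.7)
The plan is to derive \eqref{eqW} by applying $(T_\Sigma)^n$ to the equation \eqref{eqU} from Proposition \ref{proeqU} and carefully handling the commutators $[(T_\Sigma)^n, iT_{V\cdot\zeta}]$, $[\partial_t,(T_\Sigma)^n]$, together with the action of $(T_\Sigma)^n$ on the source terms $\mathcal{N}_U,\mathcal{Q}_U,\mathcal{R}_U,\mathcal{C}_U$. Part (i) is a direct consequence of \eqref{Uhorel} together with $\phi-\omega\in\oe^2H^{N_0+1}$ from Lemma \ref{symbbou}, which give $\|U\|_{H^s}\approx \|\langle\nabla\rangle h\|_{H^s}+\||\nabla|^{1/2}\phi\|_{H^s}$ for $s\leq N_0$ modulo quadratic errors harmless under \eqref{Sob1}, combined with the ellipticity of $T_\Sigma$: since $\Sigma$ is real, positive, and even in $\zeta$ with principal part $\Lambda$ of order $3/2$ (see \eqref{SymApprox0}), iterated application of Lemma \ref{PropSym}(iii) yields $\|W_n\|_{L^2}\approx \|U\|_{H^{3n/2}}$ for $n\in[0,2N_0/3]$, and summing over $n$ gives \eqref{EnEquiv}.

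The core of part (ii) is the commutator identity
\begin{equation*}
[T_\Sigma, iT_{V\cdot\zeta}] = -T_{\{\Sigma,V\cdot\zeta\}} + (\text{remainder}) = \tfrac{3}{2}\, T_\gamma T_\Sigma + (\text{remainder}),
\end{equation*}
which I would establish via \eqref{lemComp1} by expanding $V = V_1 + \oe^2 H^{N_0-1/2}$ with $V_1=|\nabla|^{-1/2}\nabla\Im U$ and using $\Sigma = \Lambda + \Sigma_1 + \oe^2\mathcal{M}^0_{N_0-4}$ from \eqref{SymApprox0}: the Poisson bracket reduces at leading order to $-(\partial_{\zeta_j}\Lambda)\zeta_k\partial_j V_{1,k}$, homogeneity of $\Lambda$ yields $\zeta_k\partial_{\zeta_j}\Lambda = (3/2)\Lambda\,\zeta_j\zeta_k/|\zeta|^2$ modulo a lower-order correction, and the contraction with $\partial_j V_{1,k} = |\nabla|^{-1/2}\partial_j\partial_k\Im U$ equals $(3/2)\Lambda\gamma$ by the definition \eqref{gamma}. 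Using the Leibniz-type expansion $[(T_\Sigma)^n,iT_{V\cdot\zeta}]=\sum_{k=0}^{n-1}(T_\Sigma)^k[T_\Sigma,iT_{V\cdot\zeta}](T_\Sigma)^{n-1-k}$ together with the fact that the order-zero symbol $\gamma$ commutes with $T_\Sigma$ modulo terms that gain $3/2$ derivatives, the principal contribution of the commutator equals $(3n/2)T_\gamma W_n$. Combined with $(T_\Sigma)^n\mathcal{N}_U = c_1 T_\gamma W_n + c_2 T_\gamma \overline{W_n}$ at leading order --- using that $\Sigma$ is real and even in $\zeta$, so that $\overline{W_n}=(T_\Sigma)^n\overline{U}$ via \eqref{Alu2}, and that $\gamma$ itself is real with symbol even in $\zeta$ --- this produces the main term $T_\gamma(a_n W_n + b_n \overline{W_n})$ with $a_n = c_1 - 3n/2 \in \mathbb{R}$ and $b_n = c_2 \in \mathbb{R}$.

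The remaining contributions are sorted into $\mathcal{A}_{W_n}, \mathcal{B}_{W_n},\mathcal{C}_{W_n}$ as follows. The $(T_\Sigma)^n\mathcal{Q}_U$ term becomes $\mathcal{A}_{W_n}$: composing with $(T_\Sigma)^n$ multiplies the symbol by $\Sigma^n\sim\langle\eta\rangle^{3n/2}$ on the high-frequency side, upgrading \eqref{symbolseqUspec} to \eqref{symbolseqWspec}, and the reality condition $a^n_{++}\in\mathbb{R}$ is inherited from that of $a_{++}$ since $\Sigma^n$ is real-valued and even in $\zeta$. The $(T_\Sigma)^n\mathcal{R}_U$ term passes directly into $\mathcal{B}_{W_n}$ via \eqref{symbolseqU}. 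The subleading commutators in the expansion of $[(T_\Sigma)^n,iT_{V\cdot\zeta}]U$ and the time-derivative commutator $[\partial_t,(T_\Sigma)^n]U$ --- analyzed using \eqref{SymApprox2}, where $\partial_t\Sigma_1$ is a quadratic-size symbol of order $1/2$ --- split between $\mathcal{A}_{W_n}$ and $\mathcal{B}_{W_n}$ according to their order. Finally $(T_\Sigma)^n\mathcal{C}_U$ together with cubic remainders from iterated paradifferential compositions yield $\mathcal{C}_{W_n}$ satisfying \eqref{eqWrem} via \eqref{cubiceqU} and Lemma \ref{PropSym}(ii). The main obstacle will be checking that the reality of $a^n_{++}$ survives every subleading paradifferential commutator; this property is what will later enable the cancellation of the trivial cubic resonances in the energy estimates.
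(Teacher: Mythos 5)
Your proposal is correct and follows essentially the same approach as the paper: apply $T_\Sigma$ repeatedly to \eqref{eqU}, track the commutators $[T_\Sigma,T_{V\cdot\zeta}]$, $[\partial_t,T_\Sigma]$, and $[T_\Sigma,T_\gamma]$, and sort the resulting quadratic remainders into $\mathcal{A}_{W_n}$ and $\mathcal{B}_{W_n}$ by order and structure. The paper organizes this as an induction on $n$, while you unroll the computation directly via the Leibniz expansion of $[(T_\Sigma)^n,iT_{V\cdot\zeta}]$; these are operationally the same, and your count $a_n=c_1-3n/2$ matches what the induction gives. Part (i) is also handled in the same spirit (telescoping $T_\Sigma^n U$ against $\Lambda^n U$ using $\Sigma-\Lambda\in\oe\mathcal{M}^{3/2}_{N_0-4}$).

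One inaccuracy worth flagging: you assert that the order-zero symbol $\gamma$ "commutes with $T_\Sigma$ modulo terms that gain $3/2$ derivatives." The Poisson bracket $\{\Sigma,\gamma\}$ has order $3/2+0-1=1/2$, so the leading part of $[T_\Sigma,T_\gamma]$ gains only \emph{one} derivative (the residual $E(\Sigma,\gamma)$ terms do gain more). As a result, these commutator contributions fall into $\mathcal{A}_{W_n}$, not $\mathcal{B}_{W_n}$, and hence the reality of the associated $a^n_{++}$ symbol must be verified for them rather than absorbed automatically. The paper checks this for the principal term $-iT_{\nabla_\zeta\Lambda\cdot\nabla_x\gamma}(a_n\Lambda^n U)$: since $\nabla_x\gamma$ contributes a factor $i(\xi-\eta)$ to the partial Fourier transform and $\widetilde\gamma$ itself is real with an overall minus sign, the $i$'s cancel and the resulting bilinear symbol in $(\Im U,U)$ is real. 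You do correctly isolate this reality bookkeeping as the main delicate point at the end, so this is a matter of precision rather than a genuine gap, but the derivative count should read $1$, not $3/2$.
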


This proposition is the analogue of Proposition \ref{proeqU} at a higher level of derivatives.
The quadratic terms $\mathcal{A}_{W_n}$ in \eqref{eqW} have the same structure as the quadratic terms $\mathcal{Q}_U$ in \eqref{eqUquadspec}
and, in particular, they gain $1$ derivative over the regularity of $W_n$, and satisfy the reality condition $a_{++}^n \in \R$.
The quadratic terms $\mathcal{B}_{W_n}$ in \eqref{eqWSsemi} have the same structure as the quadratic terms $\mathcal{R}_U$ in \eqref{eqUquad},
and gain $3/2$ derivatives over the regularity of $W_n$.

\medskip
\begin{proof}[Proof of Proposition \ref{proeqW}]
Recall that, see \eqref{SymApprox0},
\begin{align}
\label{proeqW1}
\begin{split}
\Sigma - \Lambda \in \oe\mathcal{M}^{3/2}_{N_0-4}\qquad 
  \Sigma - \Lambda-\Sigma_1 \in \oe^2\mathcal{M}^{3/2}_{N_0-4}.
\end{split}
\end{align}
It follows from Lemma \ref{PropSym} that, for any $n\in[0,2N_0/3]$,
\begin{equation}\label{Sigma-Lambda}
\begin{split}
&T_\Sigma^nU\in\oe H^{N_0-3n/2},\qquad T_\Sigma^nU-\Lambda^nU=\sum_{l=0}^{n-1}\Lambda^{n-1-l}(T_{\Sigma-\Lambda})T_\Sigma^lU,
\\
&\|W_n(t)-\Lambda^nU(t)\|_{L^2}\lesssim \oe\|U(t)\|_{H^{3n/2}} \quad\text{ for any }\quad t\in[0,T].
\end{split}
\end{equation}
In particular
\begin{equation*}
\sum_{n\in[0,2N_0/3]}\|W_n(t)\|_{L^2}\approx \|U(t)\|_{H^{N_0}}\quad\text{ for any }\quad t\in[0,T],
\end{equation*}
The bounds in \eqref{EnEquiv} follow using also the definition of $U$ in \eqref{defscalarunk} and the identity $\widehat{h}(0,t)=0$.

For (ii) we use induction over $n$. The case $n=0$ follows from Proposition \ref{proeqU}. 
Assuming that the conclusion holds for some $n\leq 2N_0/3-1$ and applying $T_\Sigma$, we find that
\begin{equation*}
\begin{split}
(\partial_t+iT_\Sigma +iT_{V\cdot\zeta})W_{n+1} & = T_{\gamma}(a_{n}W_{n+1} + b_{n}\overline{W_{n+1}})
  + [\partial_t,T_{\Sigma}]W_n + i[T_{V\cdot\zeta},T_{\Sigma}]W_{n}
\\
& + [T_\Sigma,T_\gamma](a_nW_n+b_n\overline{W_n})+T_\Sigma\mathcal{A}_{W_n}+T_\Sigma\mathcal{B}_{W_n}+T_\Sigma\mathcal{C}_{W_n}.
\end{split}
\end{equation*}
Let us examine the terms in the right-hand side  above.
Using \eqref{proeqW1}--\eqref{Sigma-Lambda} and \eqref{SymApprox2} we have
\begin{equation*}
[\partial_t,T_\Sigma]W_n = T_{\partial_t\Sigma_1}\Lambda^n U + \oe^3H^{N_0-3(n+1)/2},
\end{equation*}
From the definition of $\Sigma_1$ in \eqref{Exps}, $\partial_t h = |\nabla|\omega + \oe^2 H^{N_0-1/2}$,
the relations \eqref{unkrelations}, one can directly verify that
$T_{\partial_t\Sigma_1}\Lambda^n U$ is a term of the form $A_{++}^{n+1}[\Im U, U]$, 
for a real symbol $a_{++}^{n+1}$ as in \eqref{eqW1}--\eqref{symbolseqWspec}.

Using also \eqref{ExpV} and \eqref{gamprop} we have,
\begin{equation*}
\begin{split}
i [T_{V\cdot\zeta},T_{\Sigma}]W_{n}& = i [T_{V_1\cdot\zeta},T_{\Lambda}]W_{n} + \oe^3H^{N_0-3(n+1)/2}
\\
& = - \frac{3}{2} T_\gamma W_{n+1} + \mathcal{N}'(\Im U,\Lambda^nU)+\oe^3H^{N_0-3(n+1)/2},
\end{split}
\end{equation*}
where $\mathcal{N}'(\Im U,\Lambda^nU)$ is an acceptable quadratic term as in \eqref{eqW1}--\eqref{symbolseqWspec}. 
Moreover
\begin{equation}\label{proeqW5}
\begin{split}
[T_\Sigma,T_\gamma]&(a_nW_n + b_n\overline{W_n}) = [T_\Lambda,T_\gamma](a_n\Lambda^n U + b_n\Lambda^n \overline{U}) + \oe^3H^{N_0-3(n+1)/2}\\
& =-iT_{\nabla_\zeta \Lambda \cdot \nabla_x \gamma}(a_n\Lambda^n U + b_n\Lambda^n \overline{U})+ E(\Lambda,\gamma)(a_n\Lambda^n U + b_n\Lambda^n \overline{U})\\
&-E(\gamma,\Lambda)(a_n\Lambda^n U + b_n\Lambda^n \overline{U})+ \oe^3H^{N_0-3(n+1)/2}.
\end{split}
\end{equation}
Recalling the definition of $\gamma$ in \eqref{gamma}, 
we see that the first term in the right-hand side of \eqref{proeqW5} is of the desired form \eqref{eqW1}--\eqref{symbolseqWspec},
with $n$ replaced by $n+1$ (compare with \eqref{rep1}).
Moreover, from \eqref{ExEab}--\eqref{ExEab2} we see that the other quadratic terms in the right-hand \eqref{proeqW5} are also acceptable contributions
of the form \eqref{eqWSsemi}--\eqref{eqWsymbols}. 

Using again \eqref{proeqW1}--\eqref{Sigma-Lambda} we see that
\begin{equation*}
T_\Sigma\mathcal{A}_{W_n} = \Lambda \mathcal{A}_{W_n} + \oe^3H^{N_0-3(n+1)/2},\qquad T_\Sigma\mathcal{B}_{W_n} = \Lambda \mathcal{B}_{W_n} + \oe^3H^{N_0-3(n+1)/2}.
\end{equation*}
Since we also have $\oe^3 T_\Sigma H^{N_0-3n/2} \in \oe^3 H^{N_0-3(n+1)/2}$, the induction step is completed.
\end{proof}

\section{Energy estimates I: setup and control of large modulations} 
\label{SecEn1}

We turn now to the proof of the main bootstrap Proposition \ref{MainBootstrap}, which will be performed in this and the next section. 
Set
\begin{equation*}
\sigma=1\qquad\text{ and }\qquad\oe:=\KK_g\e.
\end{equation*}
The implied constants in inequalities such as $A\lesssim B$ are allowed to depend on $g$ in this section,
and may deteriorate as $g\to 0$, or $g\to\infty$,  or, more subtly, as $g$ is close to the set $\mathcal{N}$ (see the constants $c_y$ in \eqref{ml17} and $b'_y$ in \eqref{ml42}).
The constant $\KK_g$ is assumed to be taken large relative to these implied constants, and then $\e$ is taken small relative to $\KK_g^{-1}$.

\subsection{Energy identities}
The hypothesis \eqref{Sob1} holds, due to the bootstrap assumption in Proposition \ref{MainBootstrap} 
and the identities $\partial_th=G(h)\phi$ and \eqref{int0}.  We define the energy functional
\begin{equation}
\label{Efun1}
\mathcal{E}_{N_0}(t):=\frac{1}{2}\sum_{n\in[0,2N_0/3]}\|W_n(t)\|_{L^2}^2,
\end{equation}
where the functions $W_n$ are defined in \eqref{defW_n} and satisfy the equation \eqref{eqW}.
We start with a lemma describing the evolution of $\mathcal{E}_{N_0}$:

\begin{lemma}\label{ProBulk}
For any $t\in[0,T]$ we have
\begin{equation}
\label{ProBulk10}
{\|\langle \nabla \rangle h(t)\|}_{H^{N_0}}^2 + {\| \,|\nabla|^{1/2} \phi(t) \|}_{H^{N_0}}^2 \approx \mathcal{E}_{N_0}(t).
\end{equation}
Moreover
\begin{equation}
\label{ProBulk1}
\frac{d}{dt}\mathcal{E}_{N_0}(t) = \mathcal{B}_{0}(t) + \mathcal{B}_1(t) + \mathcal{B}_2(t) + \mathcal{B}_E(t)
\end{equation}
where:

\setlength{\leftmargini}{2em}
\begin{itemize}

\medskip
\item  The bulk term $\mathcal{B}_0$ has the form
\begin{equation}
\label{ProBulk2}
\mathcal{B}_0 := \Re \sum_{\iota \in \{+,-\}} \sum_{\xi,\eta\in\Z^2}
  c_\iota \mu_0(\xi,\eta)\widehat{\Im U}(\xi-\eta) \widehat{W_\iota^0}(\eta) \widehat{\overline{W^0}}(-\xi) 
\end{equation}
where $W^0:=T^{2N_0/3}_\Sigma U$, $W_+^0 = W^0$, $W^0_-=\overline{W^0}$, $c_+ \in \R$, $c_- \in \C$, and
\begin{equation}
\label{ProBulk2sym}
\begin{split}
& \mu_0(\xi,\eta) = |\xi-\eta|^{3/2}\mathfrak{d}(\xi,\eta), 
  \qquad \mathfrak{d}(\xi,\eta) := \chi\Big(\frac{|\xi-\eta|}{|\xi+\eta|}\Big) \Big(\frac{\xi-\eta}{|\xi-\eta|}\cdot\frac{\xi+\eta}{|\xi+\eta|}\Big)^2.
\end{split}
\end{equation}

\item The bulk term $\mathcal{B}_1$ can be written as
\begin{align}
\label{ProBulk3}
\begin{split}
\mathcal{B}_1 = \mathcal{B}_1^+ + \mathcal{B}_1^-,
\qquad & \mathcal{B}_1^+ := \Re \sum_{\xi,\eta\in\Z^2} \mu_1^+(\xi,\eta) \widehat{\Im U}(\xi-\eta) \widehat{W^0}(\eta) \widehat{\overline{W^0}}(-\xi),
\\
& \mathcal{B}_1^- := \Re \sum_{\iota \in \{+,-\}} \sum_{\xi,\eta\in\Z^2} 
  \mu_1^-(\xi,\eta) \what{U_{\iota}}(\xi-\eta) \what{\overline{W^0}}(\eta) \widehat{\overline{W^0}}(-\xi),
\end{split}
\end{align}
where the symbols $\mu_1^+$ and $\mu^-_1 = \mu^-_{1;U_\iota}$ satisfy
\begin{equation}
\label{ProBulk3sym}
\begin{split}
& |\mu_1^{\pm} (\xi,\eta)| \lesssim (\langle\xi\rangle+\langle\eta\rangle)^{-1} \langle \xi-\eta \rangle^6 \varphi_{\leq -6}(|\xi-\eta|/|\eta|), \qquad \mu_1^+ \in \R.
\end{split}
\end{equation}

\medskip
\item The bulk term $\mathcal{B}_2$ is a finite linear combination of the form
\begin{align}
\label{ProBulk5}
\begin{split}
& \mathcal{B}_2:= \sum_{W',W'' \in \mathcal{W}} \sum_{\iota \in \{+,-\}} \sum_{\xi,\eta\in\Z^2}
  \mu_2(\xi,\eta) \widehat{U_{\iota}}(\xi-\eta) \what{W''}(\eta) \what{\overline{W'}}(-\xi)   
\end{split}
\end{align}
where $U$ and $\Sigma$ are defined as in Proposition \ref{proeqU}, 
and $\mathcal{W} := \{ T_\Sigma^m U_{\pm} : \, m \leq 2N_0/3 \}$ and the symbols $\mu_2 = \mu_{2;(U_{\iota},W',W'')}$ satisfy
\begin{equation}
\label{ProBulk5sym}
\begin{split}
& |\mu_2 (\xi,\eta)| \lesssim (\langle\xi\rangle+\langle\eta\rangle)^{-3/2} \langle \xi-\eta \rangle^6.
\end{split}
\end{equation}

\medskip
\item $\mathcal{B}_E$ are cubic remainder terms satisfying
\begin{equation}\label{ProBulk4}
 \qquad |\mathcal{B}_E(t)| \lesssim \oe^4\qquad\text{ for any }t\in[0,T].
\end{equation}
 
\end{itemize}
\end{lemma}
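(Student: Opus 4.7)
The strategy is to differentiate $\mathcal{E}_{N_0}$ in time, substitute the evolution equation \eqref{eqW} into each $\Re\langle\partial_t W_n,W_n\rangle_{L^2}$, and reorganize the resulting expressions into the four classes $\mathcal{B}_0,\mathcal{B}_1,\mathcal{B}_2,\mathcal{B}_E$. The norm equivalence \eqref{ProBulk10} is immediate from \eqref{EnEquiv}. For the dynamical identity \eqref{ProBulk1}, I start from
\begin{equation*}
\frac{d}{dt}\mathcal{E}_{N_0}(t) = \sum_{n\in[0,2N_0/3]}\Re\langle \partial_t W_n, W_n\rangle_{L^2}
\end{equation*}
and invoke \eqref{eqW}. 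The key preliminary observation is that since $\Sigma$ and $V\cdot\zeta$ are real-valued, the Weyl paradifferential operators $T_\Sigma$ and $T_{V\cdot\zeta}$ defined by \eqref{Taf} are self-adjoint on $L^2$; consequently $iT_\Sigma$ and $iT_{V\cdot\zeta}$ are skew-adjoint and contribute nothing to $\Re\langle\partial_t W_n,W_n\rangle$. What remains to reorganize is $\sum_n\Re\langle T_\gamma(a_n W_n+b_n\overline{W_n})+\mathcal{A}_{W_n}+\mathcal{B}_{W_n}+\mathcal{C}_{W_n},\,W_n\rangle$.

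The bulk term $\mathcal{B}_0$ emerges from the top-level ($n=2N_0/3$, so $W_n=W^0$) $T_\gamma$ pairing. Combining \eqref{Taf} with the explicit Fourier coefficient \eqref{gamma2}, the Weyl kernel of $T_\gamma$ at $(\xi,\eta)$ collapses to
\begin{equation*}
\chi(|\xi-\eta|/|\xi+\eta|)\,\widetilde\gamma(\xi-\eta,(\xi+\eta)/2) \;=\; -|\xi-\eta|^{3/2}\mathfrak{d}(\xi,\eta)\,\widehat{\Im U}(\xi-\eta),
\end{equation*}
which is precisely $-\mu_0(\xi,\eta)\widehat{\Im U}(\xi-\eta)$. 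Pairing with $W^0$ and absorbing the sign together with the real constants $a_n,b_n$ into $c_\pm$ yields \eqref{ProBulk2}. The appearance of the depletion factor $\mathfrak{d}$, the squared cosine between $\xi-\eta$ and $\xi+\eta$, is the crucial algebraic feature that will be exploited to obtain the effective derivative gain on the resonant set in Section \ref{SecEn2}.

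The bulk term $\mathcal{B}_1$ arises from the top-level $\mathcal{A}_{W_n}$ contribution. By \eqref{eqW1}--\eqref{symbolseqWspec} at $n=2N_0/3$ the symbols $a^{2N_0/3}_{\iota_1\iota_2}$ satisfy $|a^{2N_0/3}_{\iota_1\iota_2}(\xi,\eta)|\lesssim \langle\eta\rangle^{N_0-1}\langle\xi-\eta\rangle^4\varphi_{\leq-5}(|\xi-\eta|/|\eta|)$, with the decisive reality property $a^{2N_0/3}_{++}\in\R$. After pairing with $W^0$ I use the paradifferential identification $\widehat{W^0}(\eta)=\Lambda(\eta)^{2N_0/3}\widehat{U}(\eta)+(\text{lower order})$ from \eqref{Sigma-Lambda} to replace the high-frequency $\widehat{U_\iota}(\eta)$ factor by $\widehat{W^0_\iota}(\eta)$; the division by $\Lambda(\eta)^{2N_0/3}\sim\langle\eta\rangle^{N_0}$ converts the $\langle\eta\rangle^{N_0-1}$ growth into the $(\langle\xi\rangle+\langle\eta\rangle)^{-1}$ decay required by \eqref{ProBulk3sym}, and the reality of $a^{2N_0/3}_{++}$ descends into $\mu_1^+\in\R$. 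The narrow dyadic band between $\varphi_{\leq-5}$ and $\varphi_{\leq-6}$, together with all paradifferential remainders produced by the $\widehat{U}\leftrightarrow\widehat{W^0}$ exchange, is absorbed by the generous $\langle\xi-\eta\rangle^6$ weight permitted in $\mathcal{B}_2$.

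All remaining contributions feed into $\mathcal{B}_2$ and $\mathcal{B}_E$. For the $T_\gamma$ pairings at lower levels $n<2N_0/3$, the deficit of $3(2N_0/3-n)/2$ derivatives, encoded via $\widehat{W_n}(\eta)\approx \Lambda(\eta)^{n-2N_0/3}\widehat{W^0}(\eta)$, supplies the $(\langle\xi\rangle+\langle\eta\rangle)^{-3/2}$ gain demanded by \eqref{ProBulk5sym}; the $\mathcal{A}_{W_n}$ pairings at $n<2N_0/3$ are handled analogously; and the $\mathcal{B}_{W_n}$ terms at every level already satisfy the required $3/2$-derivative gain by \eqref{eqWsymbols}. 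In each case the class $\mathcal{W}$ is broad enough to host the high-frequency factors. Finally $\mathcal{B}_E:=\sum_n\Re\langle\mathcal{C}_{W_n},W_n\rangle$ satisfies \eqref{ProBulk4} by Cauchy--Schwarz together with \eqref{eqWrem} and $\|W_n\|_{L^2}\lesssim\oe$. The main obstacle is not any one estimate but the bookkeeping: tracking every paradifferential remainder through the substitutions while preserving the reality of $\mu_1^+$, which is indispensable for the cancellation of trivial cubic resonances in Section \ref{SecEn2}.
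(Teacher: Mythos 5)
Your proposal is correct and follows essentially the same route as the paper: kill the quasilinear transport terms by self-adjointness of $T_\Sigma$ and $T_{V\cdot\zeta}$, peel off the top-level $T_\gamma$ pairing into $\mathcal{B}_0$ via the explicit Weyl kernel \eqref{gamma2}, convert the top-level $\mathcal{A}_{W_n}$ pairing into $\mathcal{B}_1$ by trading $\widehat{U}(\eta)$ for $\widehat{W^0}(\eta)/(1+\Lambda(\eta)^n)$ (preserving reality of $\mu_1^+$), and funnel everything else into $\mathcal{B}_2$ and $\mathcal{B}_E$. The one detail you gloss over that the paper makes explicit is the symmetric factoring $s(\xi,\eta)=\mu_2(\xi,\eta)\,[(1+\Lambda(\xi-\eta)^n)+(1+\Lambda(\eta)^n)]$ used to distribute the derivative weight of $\mathcal{B}_{W_n}$ onto whichever factor carries the high frequency; but the idea is as you describe and the argument closes the same way.
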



Notice that the energy estimates we prove in Lemma \ref{ProBulk} are stronger than standard energy estimates, 
since the cubic terms $\mathcal{B}_0,\mathcal{B}_1$ and $\mathcal{B}_2$
are better than generic cubic terms, due to the special form of the multipliers $\mu_0,\mu_1^{\pm},\mu_2$.
Also notice that the sum in \eqref{ProBulk2} corresponding to $\iota = +$ is already real valued.
The decomposition of the different bulk terms above is dictated by the different ways in which we will handle them.

\medskip
\begin{proof}[Proof of Lemma \ref{ProBulk}]
The bounds \eqref{ProBulk10} follow from \eqref{EnEquiv}.
To prove the remaining claims we start from \eqref{eqW}. 
The symbols $\Sigma$ and $V\cdot\ze$ are real-valued. Therefore we have
\begin{equation}\label{bin3.3}
\frac{d}{dt}\frac{1}{2}\Vert W_n\Vert_{L^2}^2 = \Re\langle T_\gamma (a_n W_n + b_n\overline{W_n} ),W_n\rangle 
  + \Re \langle\mathcal{A}_{W_n},W_n\rangle + \Re \langle\mathcal{B}_{W_n},W_n\rangle + \Re\langle \mathcal{C}_{W_n},W_n\rangle,
\end{equation}
since, as a consequence of Lemma \ref{PropSym} (iv), 
\begin{equation*}
\Re\langle i T_\Sigma W_n + iT_{V\cdot\zeta}W_n,W_n\rangle = 0.
\end{equation*}
Clearly, $|\langle \mathcal{C}_{W_n},W_n\rangle|\lesssim \oe^4$, so the last term can be placed in $\mathcal{B}_E$.

Recall, see \eqref{Sigma-Lambda} that $\Lambda^n U - W_n \in \oe^ 2L^2$.
Then, using \eqref{gamma2} and the definitions, $\langle T_\gamma (c_n W_n + d_n\overline{W_n} ),W_n\rangle$ 
can be written in the Fourier space as the term $\mathcal{B}_0$ in \eqref{ProBulk2}--\eqref{ProBulk2sym} when $n = 2N_0/3$. 
When $0 \leq n \leq 2N_0/3 -1$, these terms can be absorbed into \eqref{ProBulk5}--\eqref{ProBulk5sym}, up to acceptable remainders of the form
$\mathcal{B}_E$. 

Next, we look at the terms $\langle \mathcal{A}_{W_n},W_n \rangle$. 
When $n=2N_0/3$ we use the expressions in \eqref{eqW1}--\eqref{symbolseqWspec} and write
\begin{equation*}
\qquad \mu_1^+(\xi,\eta) := \frac{a_{++}^n(\xi,\eta)}{1+\Lambda(\eta)^n} \in \R,
  \qquad \mu^-_{1,\iota}(\xi,\eta) := \frac{a_{\iota-}^n(\xi,\eta)}{1+\Lambda(\eta)^n},
\end{equation*}
to deduce the form \eqref{ProBulk3}--\eqref{ProBulk3sym} up to acceptable terms of the form $\mathcal{B}_2$ and $\mathcal{B}_E$.
The terms with $n \leq 2N_0/3-1$ are also absorbed into $\mathcal{B}_2$.

Similarly, $\langle\mathcal{B}_{W_n},W_n \rangle $ can be written in the Fourier space as part of the term $\mathcal{B}_2$ 
in \eqref{ProBulk5}--\eqref{ProBulk5sym} plus acceptable errors.
Indeed, given a symbol $s$ as in \eqref{eqWsymbols}, one can write 
\begin{equation*}
s(\xi,\eta) = \mu_2(\xi,\eta)\cdot [(1+\Lambda(\xi-\eta)^n)+(1+\Lambda(\eta)^n)],
  \qquad \mu_2(\xi,\eta):=\frac{s(\xi,\eta)}{2+\Lambda(\xi-\eta)^n+\Lambda(\eta)^n}.
\end{equation*}
The symbol $\mu_2$ satisfies the required estimate in \eqref{ProBulk5sym}. The factors $1+\Lambda(\xi-\eta)^n$ and $1+\Lambda(\eta)^n$ 
can be combined with the functions $\widehat{U_{\iota_1}}(\xi-\eta)$ and $\widehat{U_{\iota_2}}(\eta)$ respectively. 
Recalling that $\Lambda^n U - W_n \in\oe^2L^2$, see \eqref{Sigma-Lambda}, the desired representation \eqref{ProBulk5} follows, 
up to acceptable errors that can be incorporated into $\mathcal{B}_E$.
\end{proof}

\subsection{Setup and strategy}
The identity \eqref{ProBulk1} shows that the energy increment can be controlled by estimating the space-time contribution 
the bulk terms $\mathcal{B}_0,\mathcal{B}_1,\mathcal{B}_2$ and $\mathcal{B}_E$. 
In view of Lemma \ref{ProBulk}, for \eqref{al2} it suffices to prove that, for any $t\in[0,T]$
\begin{equation*}
\Big|\int_0^t\mathcal{B}_0(s)\,ds\Big| + \Big|\int_0^t\mathcal{B}_1(s)\,ds\Big|
  + \Big|\int_0^t\mathcal{B}_2(s)\,ds\Big| + \Big|\int_0^t\mathcal{B}_E(s)\,ds\Big|\lesssim \oe^2\KK_g^{-1}.
\end{equation*}
The contribution of $\mathcal{B}_E$ is easy to bound, using the estimates \eqref{al0} and $|\mathcal{B}_E(s)| \lesssim \oe^4$. 

To bound the remaining contributions, it suffices to prove the following:

\begin{proposition}\label{strategy}
For any $t\in[0,T]$ and $\iota_1,\iota_2\in\{+,-\}$ we have
\begin{equation}
\label{gar0}
\Big| \Re \int_0^t\sum_{\xi,\eta\in\Z^2}
  \mu_0(\xi,\eta)\widehat{iU_{\iota_1}}(\xi-\eta,s) \widehat{W_{\iota_2}^0}(\eta,s)\widehat{\overline{W^0}}(-\xi,s)\,ds\Big| \lesssim \oe^2 \KK_g^{-1},
\end{equation}
where $\mu_0$ as in \eqref{ProBulk2sym}. Moreover, for $\mu_1^\pm$ as in \eqref{ProBulk3sym} we have
\begin{equation}\label{gar1a}
\Big| \Re \int_0^t \sum_{\xi,\eta\in\Z^2}
  \mu_1^{+}(\xi,\eta)\widehat{iU_{\iota_1}}(\xi-\eta,s) \widehat{W^0}(\eta,s) \widehat{\overline{W^0}}(-\xi,s)\,ds\Big| \lesssim \oe^2 \KK_g^{-1},
\end{equation}
\begin{equation}\label{gar1b}
\Big|\int_0^t \sum_{\xi,\eta\in\Z^2}
  \mu_1^{-}(\xi,\eta)\widehat{U_{\iota_1}}(\xi-\eta,s) \widehat{W^0_{-}}(\eta,s) \widehat{\overline{W^0}}(-\xi,s)\,ds\Big| \lesssim \oe^2 \KK_g^{-1}.
\end{equation}
Finally, for $\mu_2$ as in \eqref{ProBulk5sym} and any 
$W'_+\in \mathcal{W}_+$, $W_{\iota_2} \in \mathcal{W}_{\iota_2}$ we have
\begin{equation}
\label{gar2}
\Big|\int_0^t\sum_{\xi,\eta\in\Z^2}
  \mu_2(\xi,\eta) \widehat{U_{\iota_1}}(\xi-\eta,s) \widehat{W_{\iota_2}}(\eta,s)\widehat{\overline{W'_+}}(-\xi,s)\,ds\Big|\lesssim \oe^2\KK_g^{-1},
\end{equation}
where
\begin{equation}\label{gar2.1}
\mathcal{W}_{\iota} := \{T_\Sigma^m U_\iota:\,m\in[0,2N_0/3]\}, \qquad \iota\in\{+,-\}.
\end{equation}
\end{proposition}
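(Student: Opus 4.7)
The plan is to reduce all four sums in \eqref{gar0}--\eqref{gar2} to oscillatory integrals by passing to the linear profiles
\begin{equation*}
u(s) := e^{is\Lambda}U(s), \qquad w^0(s) := e^{is\Lambda}W^0(s), \qquad w_\iota(s) := e^{is\Lambda}W_{\iota}(s),
\end{equation*}
where $\Lambda = \sqrt{g|\nabla|+|\nabla|^3}$, so that each trilinear sum acquires an oscillatory factor $e^{is\Phi(\xi,\eta)}$ for a cubic phase $\Phi$ of the form \eqref{moddef}, with signs depending on $\iota_1,\iota_2$ and on which factors are conjugated. I then insert the partition $1 = \varphi_{\leq -D_0}(\Phi) + \varphi_{> -D_0}(\Phi)$ with a threshold $D_0 \geq 1$ to be tuned, and treat the ``large modulation'' and ``small modulation'' regions separately. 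Throughout, the profile equations given by Propositions \ref{proeqU} and \ref{proeqW} show that $\partial_s u$, $\partial_s w^0$, $\partial_s w_\iota$ all lie in $e^{is\Lambda}(\mathrm{quadratic\ in\ } U,\overline{U})$, so each time integration by parts generates a quartic expression.

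First I would dispatch the large modulation region $|\Phi|\gtrsim 2^{-D_0}$ by a single normal form: write $e^{is\Phi} = \partial_s(e^{is\Phi}/(i\Phi))$ and integrate by parts in $s$. The cutoff bounds $1/|\Phi|$ by $2^{D_0}$, so the resulting boundary trilinear terms are controlled by $C\oe^3 \cdot 2^{CD_0}$ using the crude symbol bounds \eqref{ProBulk2sym}, \eqref{ProBulk3sym}, \eqref{ProBulk5sym} and the bootstrap estimate $\|U\|_{H^{N_0}} \lesssim \oe$. The bulk quartic contributions are $\lesssim T_\e \oe^4 \cdot 2^{CD_0}$ in $L^1_s$, acceptable provided $2^{D_0}$ grows only as a small power of $1/\e$; comparing with the target $\oe^2\KK_g^{-1}$ and using $T_\e \lesssim \KK_g^{-4}\e^{-5/3}$, the tradeoff is favorable.

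Next I would attack the small modulation region $|\Phi|\lesssim 2^{-D_0}$, where the algebraic structure of each bulk term is essential. The estimate \eqref{gar2} for $\mathcal{B}_2$ follows most directly: the $3/2$-derivative gain in $\mu_2$ absorbs the small-divisor loss $|\Phi|^{-1}\lesssim \langle\xi\rangle^{3/2+}$ from Proposition \ref{prop1}, and a single time-IBP reduces matters to a routine quartic Sobolev estimate. For \eqref{gar1a}--\eqref{gar1b} the $1$-derivative gain of $\mu_1^\pm$ is not enough on its own, but the support condition $\varphi_{\leq -6}(|\xi-\eta|/|\eta|)$ forces a high-low interaction, so after substituting the equations for $W^0$ one obtains a quartic sum with a four-way modulation $\Psi$ whose lower bound from Proposition \ref{prop2} supplies an extra $\langle\xi\rangle^{-1/2}$ after a second IBP. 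The trivial four-way resonance (the configuration $\xi=\rho$ with equal signs in \eqref{evol17}) must be isolated and shown to cancel; for $\mathcal{B}_1^+$ this uses the reality $\mu_1^+\in\R$ and a symmetrization in the two high frequencies, while the $\mathcal{B}_1^-$ case, in which both high-frequency factors are $\overline{W^0}$, carries no such trivial resonance and is handled without cancellation.

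The main obstacle, and the structural heart of the proof, is \eqref{gar0} for $\mathcal{B}_0$ in the small modulation region. Here the depletion identity $|\mathfrak{d}(\xi,\eta)|\lesssim |\Phi(\xi,\eta)|/\langle\xi+\eta\rangle$, valid on the support of the cutoff (compare \eqref{introcorre}), converts the $|\xi-\eta|^{3/2}$ factor in $\mu_0$ into $|\xi-\eta|^{3/2}|\Phi|\langle\xi\rangle^{-1}$, so that a first time-IBP (whose $1/\Phi$ cancels the $\Phi$ from $\mathfrak{d}$) produces a quartic expression with a net $\langle\xi\rangle^{-1}$ gain. The residual $1/2$-derivative loss from $\partial_s w^0 \sim e^{is\Lambda}(\nabla V \cdot \nabla W^0)$ is absorbed only after a second normal form against the four-way phase $\Psi_\iota$ from \eqref{evol17}: on the non-trivial branch Proposition \ref{prop2} supplies the decisive $\langle\xi\rangle^{-1/2}$, while the trivial resonance $\xi=\rho$, $\iota=+$ must be shown to vanish using the reality of the symbol $\mu_0$ (a manifestation of time-reversibility). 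Optimizing the resulting quartic bound $\lesssim T_\e\oe^4 \cdot 2^{(3/2+)D_0}$ against the large-modulation bound $\lesssim T_\e \oe^3 \cdot 2^{-D_0}$ at $2^{D_0}\approx \e^{-2/5}$ yields the lifespan $T_\e \sim \e^{-5/3+}$ announced in Theorem \ref{MainTheo}.
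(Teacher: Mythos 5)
Your outline correctly identifies the main algebraic actors — the depletion factor for $\mathcal{B}_0$, the small-divisor bounds from Propositions \ref{prop1} and \ref{prop2}, the trivial four-way resonance and its cancellation via the reality of $\mu_0,\mu_1^+$, and the absence of a trivial resonance in the $\mathcal{B}_1^-$ sum. However, there are at least two genuine gaps that would prevent the argument from closing as written.

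First, the large-modulation region is not as soft as you suggest. After a single integration by parts, the bulk quartic term contains $(\partial_s + i\Lambda)W^0 = -iT_{V\cdot\zeta}W^0 - iT_{\Sigma-\Lambda}W^0 + \mathcal{E}_{W^0}$, and the $T_{V\cdot\zeta}$ and $T_{\Sigma-\Lambda}$ pieces lose $1$ and $3/2$ derivatives respectively on $W^0$, which is already at the top of the Sobolev scale. The crude bound $|\mu_0(\xi,\eta)|\lesssim\langle\xi-\eta\rangle^{3/2}$ provides no compensating gain at high frequency, because the correlation $\mathfrak{d}\lesssim(|\Phi|^2+\langle\xi-\eta\rangle^3)/\langle\xi\rangle$ from \eqref{Factor} only gives a gain when $|\Phi|$ is bounded, which is precisely the opposite regime. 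The paper's proof of Lemma \ref{gar7} for $\mu=\mu_0$ resolves this by a non-trivial symmetrization: the two bulk terms $\mathcal{T}^{>0}_{\mu_0}(U_{\iota_1},T_\sigma W,W')$ and $\mathcal{T}^{>0}_{\mu_0}(U_{\iota_1},W,T_\sigma W')$ are recombined into a single kernel $H_\sigma$ \eqref{M}, whose size is then controlled by a delicate one-variable Taylor expansion in the parameter $\alpha$ in \eqref{gar39}. This cancellation is a structural necessity, not a refinement; your proposal omits it entirely, so the large-modulation bound $T_\e\oe^4\cdot 2^{CD_0}$ cannot actually be obtained from crude symbol and Sobolev estimates alone.

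Second, the final accounting is arithmetically inconsistent. Balancing $T_\e\oe^4\cdot 2^{(3/2+)D_0}\lesssim\oe^2$ against $T_\e\oe^3\cdot 2^{-D_0}\lesssim\oe^2$ forces $2^{D_0}\approx\oe^{-2/5}$ and $T_\e\lesssim\oe^{-7/5+}$, exactly as the paper itself points out at the end of Step 4 of the introduction sketch. The lifespan $\e^{-5/3+}$ is obtained only after the second normal form against the four-wave modulation genuinely improves the quartic bound — you describe this mechanism, but then discard the improvement when you write down the optimization. In the paper, the tunable parameter is a frequency threshold (the paper uses $2^K = \oe T_\e\KK_g\approx\e^{-2/3}$ for $\mu_0,\mu_1^\pm$ and $2^{3J/2}=\oe T_\e\KK_g$ for $\mu_2$, while keeping the modulation cutoff fixed at order one), and the optimization runs through the iterated normal-form estimates in Lemmas \ref{lasty1}--\ref{lasty2}, not through a single modulation-threshold balance. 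A related issue is your claim that for $\mathcal{B}_2$ ``a single time-IBP reduces matters to a routine quartic Sobolev estimate'': after the first integration by parts, the $T_{V_1\cdot\zeta}W$ term still loses a derivative, and the paper handles it (Lemma \ref{lasty1}, Steps 2--3) by decomposing further according to the four-wave modulation $\Psi_{\iota_1\iota_3}$ and integrating by parts a second time on the branch where $|\Psi_{\iota_1\iota_3}|\gtrsim 1$.
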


To prove \eqref{gar0}--\eqref{gar2} we often need to integrate by parts in time (the method of normal forms). 
Notice that, at the linear level, the solutions $U$ and $W$ satisfy the equations $(\partial_t+i\Lambda)U=0$ and $(\partial_t+i\Lambda)W=0$, 
where $\Lambda = \Lambda_g=\sqrt{g|\nabla|+|\nabla|^3}$. 
Thus, at the linear level, the solutions are of the form $U(t)\approx e^{-it\Lambda}U(0)$ and $W(t)\approx e^{-it\Lambda}W(0)$ and the natural time oscillation of cubic expressions like those in \eqref{gar0}--\eqref{gar2} is given by the {\it{modulation}} or {\it{phase function}} 
\begin{align}
\label{defPhipm}
\Phi_{\iota_1\iota_2}(\xi,\eta) := \Lambda(\xi) - \iota_1 \Lambda(\xi-\eta) -\iota_2 \Lambda(\eta).
\end{align}

We use these phase functions to decompose the cubic expressions in \eqref{gar0}-\eqref{gar2}. 
More precisely, for any $\iota_1,\iota_2\in\{+,-\}$ and $\mu \in\{\mu_0,\mu_1^{\iota_2},\mu_2\}$ we define the trilinear operators
\begin{equation}\label{gar3}
\begin{split}
&\mathcal{A}_{\mu;\iota_1\iota_2}^{>0}(F,G,H) := \sum_{\xi,\eta\in\Z^2}
  \mu(\xi,\eta)\widehat{F}(\xi-\eta) \widehat{G}(\eta)\widehat{\overline{H}}(-\xi)\cdot \varphi_{>0}(\Phi_{\iota_1\iota_2}(\xi,\eta)),
\\
& \mathcal{A}_{\mu;\iota_1\iota_2}^{\leq 0}(F,G,H) := \sum_{\xi,\eta\in\Z^2}
 \mu(\xi,\eta)\widehat{F}(\xi-\eta) \widehat{G}(\eta)\widehat{\overline{H}}(-\xi)\cdot \varphi_{\leq 0}(\Phi_{\iota_1\iota_2}(\xi,\eta)).
\end{split}
\end{equation}

We will prove the main bounds \eqref{gar0}-\eqref{gar2} in the rest of the paper,
by analyzing separately the different contributions. In particular, in subsection \ref{LargeMod} we deal with the contributions from large modulations $\mathcal{A}^{>0}_{\mu;\iota_1\iota_2}$ using integration by parts in time (normal forms) and symmetrization. In section \ref{SecEn2} we deal with the contributions of small modulations $\mathcal{A}_{\mu;\iota_1\iota_2}^{\leq 0}$ in two steps: first we control the contributions of large frequencies, using the smallness of the multipliers $\mu_0,\mu_1^\pm,\mu_2$, and then we control the contributions of small frequencies, using iterated normal forms and the lower bounds in Propositions \ref{prop1} and \ref{prop2}.

We will use several times the following:

\begin{remark}\label{RemFactor}
If $\xi,\eta\in\mathbb{Z}^2$ and $0<|\xi-\eta|<2^{-4}|\xi+\eta|$ and $\iota\in\{+,-\}$ then
\begin{equation}\label{Factor}
\Big(\frac{\xi-\eta}{|\xi-\eta|}\cdot\frac{\xi+\eta}{|\xi+\eta|}\Big)^2 
  \lesssim \frac{(\Lambda(\xi)-\Lambda(\eta))^2+\langle\xi-\eta\rangle^3}{(1+|\xi|+|\eta|)\langle\xi-\eta\rangle^2}
  \lesssim \frac{\Phi_{\iota+}(\xi,\eta)^2+\langle\xi-\eta\rangle^3}{(1+|\xi|+|\eta|) \langle\xi-\eta\rangle^2}.
\end{equation}
This provides an important correlation between the factor $\mathfrak{d}$
in \eqref{ProBulk2sym} and the phases $\Phi_{\iota+}$,
which we will use at various levels in the proof. 
\end{remark}

We will also often use the following simple trilinear estimate, which follows from the Cauchy--Schwarz inequality: 
if $|\nu(\xi,\eta)|\lesssim 1$ for any $\xi,\eta\in\Z^2$ then 
\begin{equation}\label{Fact2}
\Big| \sum_{\xi,\eta\in\Z^2}\nu(\xi,\eta)\widehat{F}(\xi-\eta) \widehat{G}(\eta) \widehat{\overline{H}}(-\xi)\Big| 
  \lesssim\|\widehat{F}\|_{L^1} \|\widehat{G}\|_{L^2} \|\widehat{H}\|_{L^2} \lesssim \|F\|_{H^2} \|G\|_{L^2}\|H\|_{L^2}.
\end{equation}

\subsection{Large modulations}\label{LargeMod}
In this subsection we consider the operators $\mathcal{A}_{\mu;\iota_1\iota_2}^{>0}$, see \eqref{gar3}, which correspond to large modulations. We will prove the following:

\begin{lemma}\label{gar7}
For any $\iota_1,\iota_2\in\{+,-\}$, $W'_+\in\mathcal{W}_+$, $W_{\iota_2}\in\mathcal{W}_{\iota_2}$, $\mu\in\{\mu_0,\mu_1^{\iota_2},\mu_2\}$, and $t\in[0,T]$ we have
\begin{equation}\label{gar9}
\Big| \int_0^t\mathcal{A}_{\mu;\iota_1\iota_2}^{>0}(U_{\iota_1}(s), W_{\iota_2}(s),W'_+(s))\,ds\Big| \lesssim \oe^2\KK_g^{-1}.
\end{equation}
\end{lemma}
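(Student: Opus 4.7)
The plan is to run a normal-form (integration by parts in time) argument, which is admissible on $\mathrm{supp}\,\varphi_{>0}(\Phi_{\iota_1\iota_2})$ precisely because $|\Phi_{\iota_1\iota_2}|\geq 1$ there. Introducing the linear profiles
\begin{align*}
u(s):=e^{is\Lambda}U(s),\qquad w(s):=e^{is\Lambda}W(s),\quad W\in\{W_{\iota_2},W'_+\},
\end{align*}
each summand of \eqref{gar3} acquires a factor $e^{is\Phi_{\iota_1\iota_2}(\xi,\eta)}$. Writing
\begin{align*}
e^{is\Phi}\varphi_{>0}(\Phi)=\partial_s\bigl(e^{is\Phi}\varphi_{>0}(\Phi)/(i\Phi)\bigr),
\end{align*}
and integrating by parts in $s$ produces two contributions: (a) boundary cubic terms at $s=0$ and $s=t$ with modified multiplier $\widetilde\mu(\xi,\eta):=\mu(\xi,\eta)\varphi_{>0}(\Phi)/(i\Phi)$, and (b) a time integral of quartic expressions in which $\partial_s$ lands on one of the three profiles.

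The boundary terms are benign: on $\mathrm{supp}\,\varphi_{>0}(\Phi)$ one has $|\widetilde\mu|\leq|\mu|$, and each of $\mu_0,\mu_1^{\pm},\mu_2$ obeys $|\mu(\xi,\eta)|\lesssim \langle\xi-\eta\rangle^{C}$ with $C\leq 6$. Applying \eqref{Fact2}, and absorbing the low-frequency factor in $\|\langle\nabla\rangle^{C+2}U_{\iota_1}\|_{L^2}\lesssim \|U\|_{H^{N_0}}\lesssim \oe$ (which is valid since $N_0=30\geq C+2$), each boundary contribution is bounded by $\oe^3$. Since $\oe=\KK_g\e\leq \KK_g^{-3}$, this is $\lesssim \oe^2\KK_g^{-1}$, as desired.

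For the quartic time-integrals I would use the equations \eqref{eqU} and \eqref{eqW} to rewrite $\partial_s U+i\Lambda U$ and $\partial_s W+i\Lambda W$ as quadratic expressions in $(U,\bar U)$, modulo cubic remainders in $\oe^3 H^{N_0-O(1)}$ that contribute like the boundary terms. The \emph{main obstacle} is the quasi-linear part of $\partial_s W^0$ where $W^0=W_{2N_0/3}\in \oe L^2$: \eqref{eqW} produces the pieces $-iT_{\Sigma-\Lambda}W^0$ and $-iT_{V\cdot\zeta}W^0$, which a priori carry up to $3/2$ derivatives of $W^0$ and hence are \emph{not} individually in $L^2$, so a naive Fourier/Cauchy--Schwarz bound would lose derivatives. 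I would resolve this by re-symmetrizing the quartic expression: since $T_\Sigma$ and $T_{V\cdot\zeta}$ are self-adjoint up to zero-order commutators (Lemma~\ref{PropSym}(iv)), one may transfer the top-order derivative across the pairing with $\bar{W'_+}$ onto either the low-frequency factor $U_{\iota_1}$ or the paradifferential symbol itself. The cut-offs $\chi(|\xi-\eta|/|\xi+\eta|)$ and $\varphi_{\leq -5}(|\xi-\eta|/|\eta|)$ inherent in $\mu,a^n_{\iota_1\iota_2},s^n_{\iota_1\iota_2}$ guarantee that the transferred derivatives land on $H^{N_0}$ factors, so that after rearrangement every quartic piece is bounded pointwise in $s$ by $C\oe^4$. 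Integrating over $[0,T]\subseteq [0,T_\e]$ then yields $CT_\e\oe^4\lesssim \e^{7/3}[\log(2/\e)]^{-2}\lesssim \oe^2\KK_g^{-1}$, using $T_\e\lesssim \KK_g^{-4}\e^{-5/3}[\log(2/\e)]^{-2}$ and $\e\leq \KK_g^{-4}$, completing \eqref{gar9}.
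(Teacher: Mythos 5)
Your overall strategy (integration by parts in time, then re-symmetrization to avoid the apparent loss of $3/2$ derivatives coming from $\partial_s W \sim -iT_\Sigma W - iT_{V\cdot\zeta}W$) is the one the paper uses for $\mu = \mu_0$, so the conceptual identification is right. However, the proposal has a genuine gap exactly where the derivative loss is supposed to be resolved. Self-adjointness of $T_\sigma$ (for $\sigma\in\{\Sigma-\Lambda,V\cdot\zeta\}$) only tells you that $\langle T_\sigma F,G\rangle = \langle F,T_\sigma G\rangle$; it does not by itself control the \emph{difference}
\[
\mathcal{T}^{>0}_{\mu_0}(U_{\iota_1},T_\sigma W_+,W'_+)-\mathcal{T}^{>0}_{\mu_0}(U_{\iota_1},W_+,T_\sigma W'_+),
\]
because the trilinear kernel $\mu_0(\xi,\eta)\,\varphi_{>0}(\Phi(\xi,\eta))/\Phi(\xi,\eta)$ sits \emph{between} the two factors, so moving $T_\sigma$ across produces a ``commutator'' in which this kernel is evaluated at shifted frequency arguments (compare the two terms in the paper's formula \eqref{M}). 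Claiming ``after rearrangement every quartic piece is bounded pointwise in $s$ by $C\oe^4$'' is therefore unjustified: each of $\mathcal{T}^{>0}_{\mu_0}(U_{\iota_1},T_\sigma W_+,W'_+)$ and $\mathcal{T}^{>0}_{\mu_0}(U_{\iota_1},W_+,T_\sigma W'_+)$ individually loses $3/2$ derivatives, and only their difference is acceptable. Establishing that the residual kernel $H_\sigma(\xi,\eta,\rho)$ obeys $|H_\sigma|\lesssim\oe\langle\xi-\eta-\rho\rangle^4\langle\rho\rangle^{-12}$ is the actual crux, and it requires a quantitative estimate on how $\mu_0(\xi,\eta)\varphi_{>0}(\Phi)/\Phi$ varies under frequency shifts of size $O(|\rho|)$. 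The paper proves this via the one-parameter family $G(\alpha)$ in \eqref{gar39}, differentiating in $\alpha$ and using the depletion factor $\mathfrak{d}$ together with the correlation \eqref{Factor}; none of this is present in your sketch. The appeal to the cutoffs $\chi$ and $\varphi_{\leq -5}$ only constrains support, it does not give differentiability of the discontinuous factor $\varphi_{>0}(\Phi)/\Phi$.

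Two smaller points. First, the paper does \emph{not} use symmetrization for $\mu\in\{\mu_1^{\iota_2},\mu_2\}$: since those multipliers already gain at least one power of $\langle\xi\rangle$, a frequency truncation at $2^K=\oe T_\e\KK_g$ suffices — high frequencies are estimated directly with the $2^{-K}$ gain, and at low frequencies the derivative loss is absorbed by $2^K\oe\lesssim 1$ after integrating by parts. Your unified symmetrization route could in principle also work for these, but it replaces a short argument with a harder one. Second, you should also note that when $\iota_2=-$ the time-differentiated factor $\partial_s W_{\iota_2}$ is harmless because the modulation $\Phi=\Lambda(\xi)+\Lambda(\eta)-\iota_1\Lambda(\xi-\eta)$ is automatically comparable to the high frequency, so $|\Phi|^{-1}$ already gains the missing $3/2$ derivatives there; your proposal treats $\iota_2=\pm$ identically and thus over-engineers that case.
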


The rest of this subsection is concerned with the proof of Lemma \ref{gar7}. Notice that the bounds \eqref{gar9} are slightly stronger than what is needed to prove \eqref{gar0}--\eqref{gar2}, in the sense that we do not use some of the symmetries in the integrals in \eqref{gar0}--\eqref{gar1b}. In other words, the case of large modulation is more robust. There are no small divisors issues, 
and we can integrate by parts in $s$ and re-symmetrize to avoid derivative loss and prove the desired estimates. 

Notice that, for any $s\in[0,T]$,  
\begin{equation}\label{gar10}
\|U_{\iota_1}(s)\|_{H^{N_0}} \lesssim \oe,\qquad (\partial_s+i\iota_1\Lambda)U_{\iota_1}(s)\in\oe^2 H^{N_0-2},
\end{equation}
This follows from Proposition \ref{proeqU}. Moreover, the functions $W\in\mathcal{W}_+$ satisfy the bounds 
\begin{equation}\label{gar11}
{\| W(s) \|}_{L^2}\lesssim \oe
\end{equation}
and satisfy equations of the form (see Proposition \ref{proeqW}) 
\begin{equation}\label{gar12}
(\partial_t + i\Lambda)W = - iT_{V\cdot\zeta}W -iT_{\Sigma - \Lambda}W + \mathcal{E}_W,
\end{equation}
for some term $\mathcal{E}_W$ satisfying the bound in \eqref{gar12.5} below.
Moreover, using Lemma \ref{PropSym} (ii) and Lemma \ref{symbbou} we see that, for all $k \in \Z$ and $s\in[0,T]$,
\begin{align}
\label{gar12.5}
\begin{split}
\|\mathcal{E}_W(s)\|_{L^2}&\lesssim\oe^2,\\
{\| (P_k T_{V\cdot\zeta}W)(s) \|}_{L^2} & \lesssim \oe 2^{k} {\|P_{[k-2,k+2]}W(s)\|}_{L^2},
\\
{\| (P_k T_{\Sigma-\Lambda-\Sigma_1}W)(s) \|}_{L^2} & \lesssim \oe^2 2^{3k/2} {\|P_{[k-2,k+2]}W(s)\|}_{L^2},\\
{\| (P_k T_{\Sigma_1}W)(s) \|}_{L^2} & \lesssim \oe 2^{k/2} {\|P_{[k-2,k+2]}W(s)\|}_{L^2}.
\end{split}
\end{align}

To integrate by parts in $s$, for $\mu\in\{\mu_0,\mu_1^{\iota_2},\mu_2\}$ we define the trilinear operators ,  
\begin{align}
\label{LMnot1}
& \mathcal{T}_{\mu;\iota_1\iota_2}^{>0}(F,G,H):=\sum_{\xi,\eta\in\Z^2}\mu(\xi,\eta)\widehat{F}(\xi-\eta) \widehat{G}(\eta)\widehat{\overline{H}}(-\xi)\cdot  
  \frac{\varphi_{> 0}(\Phi_{\iota_1\iota_2}(\xi,\eta))}{i\Phi_{\iota_1\iota_2}(\xi,\eta)}.
\end{align}
Note how the operators in \eqref{LMnot1} are related to the first formula in \eqref{gar3},
and are obtained diving the symbol in \eqref{gar3} by the phase function $\Phi_{\iota_1\iota_2}$ in \eqref{defPhipm}.
We will use these types of operators several times below in connection to normal forms transformations, 
see the identity \eqref{LM1}.

For simplicity of notation, in the rest of the paper we remove the subscripts ${\iota_1\iota_2}$ 
in the operators $\mathcal{T}_{\mu;\iota_1\iota_2}^{>0}$ and $\mathcal{A}_{\mu;\iota_1\iota_2}^{>0}$, 
and in the phase functions $\Phi_{\iota_1\iota_2}$. 
The bounds \eqref{gar9} are harder in the case $\mu=\mu_0$ because we need additional symmetrization arguments.

\begin{proof}[Proof of \eqref{gar9} when $\mu=\mu_0$]
{\bf{Step 1.}} Our starting point is the normal form transformation formula
\begin{align}
\label{LM1}
\begin{split}
&\int_0^t\mathcal{A}_{\mu_0}^{>0}(U_{\iota_1}(s),W_{\iota_2}(s),W'_+(s))\,ds 
  = - \int_0^t \mathcal{T}_{\mu_0}^{>0}((\partial_s+i\Lambda_{\iota_1})U_{\iota_1}(s),W_{\iota_2}(s),W'_+(s))\,ds
\\
& - \int_0^t \big[\mathcal{T}_{\mu_0}^{>0}(U_{\iota_1}(s),(\partial_s+i\Lambda_{\iota_2})W_{\iota_2}(s),W'_+(s))
  + \mathcal{T}_{\mu_0}^{>0}(U_{\iota_1}(s),W_{\iota_2}(s),(\partial_s+i\Lambda)W'_+(s))\big]\,ds
\\
& + \mathcal{T}_{\mu_0}^{>0}(U_{\iota_1}(t),W_{\iota_2}(t),W'_+(t))-\mathcal{T}_{\mu_0}^{>0}(U_{\iota_1}(0),W_{\iota_2}(0),W'_+(0)),
\end{split}
\end{align} 
where $\Lambda_+= \Lambda$, $\Lambda_-:=-\Lambda$. 
\eqref{LM1} follows by writing
\begin{equation*}
\mathcal{T}(U_{\iota_1}(t),W_{\iota_2}(t),W'_+(t))-\mathcal{T}(U_{\iota_1}(0),W_{\iota_2}(0),W'_+(0))=\int_0^t\frac{d}{ds}\{\mathcal{T}(U_{\iota_1}(s),W_{\iota_2}(s),W'_+(s))\}\,ds,
\end{equation*}
where $\mathcal{T}=\mathcal{T}_{\mu_0}^{>0}$ and expanding suitably the terms in the right-hand side
using the definitions of the operators \eqref{gar3} and \eqref{LMnot1}, and of the phase \eqref{defPhipm}.

All the terms in \eqref{LM1} except the ones in the second line in the case $\iota_2=+$ are easy to estimate. It follows from \eqref{Fact2} that
\begin{equation}\label{LM1.5}
|\mathcal{T}_{\mu_0}^{>0}(F,G,H)|\lesssim\|F\|_{H^4}\|G\|_{L^2}\|H\|_{L^2}
\end{equation}
for any functions $F,G,H$. Then, using just \eqref{gar10}--\eqref{gar11}, we can bound
\begin{align}\label{LM1.6}
|\mathcal{T}_{\mu_0}^{>0}(U_{\iota_1}(s),W_{\iota_2}(s),W'_+(s))|\lesssim\oe^3
\end{align} 
for any $s\in[0,T]$, and
\begin{align}\label{LM1.7}
\Big|\int_0^t \mathcal{T}_{\mu_0}^{>0}(&(\partial_s+i\Lambda_{\iota_1})U_{\iota_1}(s),W_{\iota_2}(s),W'_+(s))\,ds\Big|\lesssim T_\e\oe^4.
\end{align} 
These bounds suffice, due to the assumption \eqref{al0}. 

Similarly, if $\iota_2=-$ then $\Phi(\xi,\eta)=\Lambda(\xi)+\Lambda(\eta)-\iota_1\Lambda(\xi-\eta)$ and we have the bound
\begin{equation*}
\Big|\frac{\varphi_{> 0}(\Phi(\xi,\eta))}{i\Phi(\xi,\eta)}\Big|\lesssim \frac{(1+|\xi-\eta|)^{3/2}}{(1+|\xi|+|\eta|)^{3/2}}.
\end{equation*}
It follows from \eqref{gar12}--\eqref{gar12.5} that $\|\langle\nabla\rangle^{-3/2}(\partial_s+i\Lambda)W(s)\|_{L^2}\lesssim\oe^2$ for any $s\in[0,T]$ and $W\in\mathcal{W}_+$. Therefore, as before, we can estimate the absolute value of the term in the second line of \eqref{LM1} by $T_\e\oe^4$, which suffices.

{\bf{Step 2.}} To bound the remaining terms, which are 
\begin{align*}
\begin{split}
Y(s):= \mathcal{T}_{\mu_0}^{>0}(U_{\iota_1}(s),(\partial_s+i\Lambda)W_{+}(s),W'_+(s))+\mathcal{T}_{\mu_0}^{>0}(U_{\iota_1}(s),W_{+}(s),(\partial_s+i\Lambda)W'_+(s)),
\end{split}
\end{align*} 
we need an additional symmetrization argument to avoid loss of derivative. Using \eqref{gar12}, we can write $Y = Y_1 + Y_2 + Y_3$ where
\begin{align}
\label{LM21}
\begin{split}
Y_1(s) & := -i\big[\mathcal{T}_{\mu_0}^{>0}(U_{\iota_1}(s),T_{\Sigma-\Lambda}W_{+}(s),W'_+(s))-\mathcal{T}_{\mu_0}^{>0}(U_{\iota_1}(s),W_{+}(s),T_{\Sigma-\Lambda}W'_+(s))\big] ,\\
Y_2(s) &: = -i\big[\mathcal{T}_{\mu_0}^{>0}(U_{\iota_1}(s),T_{V\cdot\ze}W_{+}(s),W'_+(s))-\mathcal{T}_{\mu_0}^{>0}(U_{\iota_1}(s),W_{+}(s),T_{V\cdot\ze}W'_+(s))\big] ,\\
Y_3(s) & :=\mathcal{T}_{\mu_0}^{>0}(U_{\iota_1}(s),\mathcal{E}_{W_{+}}(s),W'_+(s))+\mathcal{T}_{\mu_0}^{>0}(U_{\iota_1}(s),W_{+}(s),\mathcal{E}_{W'_+}(s)) .
\end{split}
\end{align} 
Using again \eqref{LM1.5}, \eqref{gar10}, and \eqref{gar12} it is easy to see that
\begin{equation*}
\int_0^t|Y_3(s)|\,ds\lesssim T_\varep\oe^4, 
\end{equation*}
which is acceptable. After these reductions, for \eqref{gar9} it suffices to prove that
\begin{equation}
\label{gar30.0}
\big|\mathcal{T}_{\mu_0}^{>0}(U_{\iota_1}(s),T_{\sigma}W(s),W'(s))-\mathcal{T}_{\mu_0}^{>0}(U_{\iota_1}(s),W(s),T_{\sigma}W'(s))\big|\lesssim\oe^4
\end{equation}
for any $s\in[0,T]$ and $\sigma\in\{\Sigma-\Lambda,V\cdot\ze\}$, where, for simplicity of notation, $W:=W_+$, $W'=W'_+$.

Using the definition \eqref{Taf} we have
\begin{equation*}
\begin{split}
\mathcal{T}_{\mu_0}^{>0}(U_{\iota_1},T_{\sigma}W,W')=\frac{1}{4\pi^2}&\sum_{\xi,\eta,\rho\in\Z^2}\mu_0(\xi,\eta)\frac{\varphi_{> 0}(\Phi(\xi,\eta))}{i\Phi(\xi,\eta)}\widehat{U_{\iota_1}}(\xi-\eta)\\
&\times\chi\Big(\frac{|\eta-\rho|}{|\eta+\rho|}\Big)\widetilde{\sigma}\Big(\eta-\rho,\frac{\eta+\rho}{2}\Big)\widehat{W}(\rho)\overline{\widehat{W'}}(\xi)
\end{split}
\end{equation*}
and
\begin{equation*}
\begin{split}
\mathcal{T}_{\mu_0}^{>0}(U_{\iota_1},W,T_{\sigma}W')=\frac{1}{4\pi^2}&\sum_{\xi,\eta,\rho\in\Z^2}\mu_0(\xi,\eta)\frac{\varphi_{> 0}(\Phi(\xi,\eta))}{i\Phi(\xi,\eta)}\widehat{U_{\iota_1}}(\xi-\eta)\\
&\times \widehat{W}(\eta)\chi\Big(\frac{|\xi-\rho|}{|\xi+\rho|}\Big)\overline{\widetilde{\sigma}}\Big(\xi-\rho,\frac{\xi+\rho}{2}\Big)\overline{\widehat{W'}}(\rho).
\end{split}
\end{equation*}
Since $\sigma$ is real-valued we have $\overline{\widetilde{\sigma}}\Big(\xi-\rho,\frac{\xi+\rho}{2}\Big)=\widetilde{\sigma}\Big(-\xi+\rho,\frac{\xi+\rho}{2}\Big)$. Therefore, after changes of variables we have
\begin{equation}\label{LM26}
\begin{split}
\mathcal{T}_{\mu_0}^{>0}(U_{\iota_1},T_{\sigma}W,W')&-\mathcal{T}_{\mu_0}^{>0}(U_{\iota_1},W,T_{\sigma}W')\\
&=\frac{1}{4\pi^2} \sum_{\xi,\eta,\rho\in\Z^2} H_\sigma(\xi,\eta,\rho) 
  \, \widehat{U_{\iota_1}}(\xi-\eta-\rho) \what{W}(\eta) \bar{\what{W'}}(\xi)
\end{split}
\end{equation} 
where
\begin{align}
\label{M}
\begin{split}
H_\sigma(\xi,\eta,\rho) :=  \mu_0(\xi,\rho+\eta)\frac{\varphi_{>0}(\Phi(\xi,\rho+\eta))}{i\Phi(\xi,\rho+\eta)} \,
  \, \wt{\sigma}\big(\rho,\frac{2\eta+\rho}{2}\big) \chi\big(\frac{|\rho|}{|2\eta+\rho|}\big)
\\
- \mu_0(\xi-\rho,\eta)\frac{\varphi_{>0}(\Phi(\xi-\rho,\eta))}{i\Phi(\xi-\rho,\eta)} \,
   \, \wt{\sigma}\big(\rho,\frac{2\xi-\rho}{2}\big) \chi\big(\frac{|\rho|}{|2\xi-\rho|}\big).
\end{split}
\end{align}
Since $|\widehat{U_{\iota_1}}(\xi-\eta-\rho)|\lesssim\oe\langle\xi-\eta-\rho\rangle^{-12}$ (see \eqref{gar10}), 
and using also \eqref{gar11}, for \eqref{gar30.0} it suffices to prove that, for any $\xi,\eta\in\Z^2$ with $|\xi-\eta|\leq 2^{-10}(1+|\xi|+|\eta|)$ we have
\begin{equation}\label{gar30}
\sum_{\rho\in\Z^2,\,|\rho|\leq 2^{-10}(1+|\xi|+|\eta|)} \frac{|H_\sigma(\xi,\eta,\rho)|}{\langle\xi-\eta-\rho\rangle^{12}} \lesssim \oe\langle\xi-\eta\rangle^{-3}.
\end{equation}

{\bf{Step 3.}} To prove \eqref{gar30} we define the symbol
\begin{align}
\label{msigma}
\begin{split}
s_\sigma(\xi,\eta,\rho; a, b) := \mu_0(\xi-a,\rho+\eta-a) 
  & \frac{\varphi_{>0}(\Phi(\xi-a,\rho+\eta-a))}{i\Phi(\xi-a,\rho+\eta-a)}
  \\
  &\times\wt{\sigma}\big(\rho,\eta+\frac{\rho}{2}+b\big) \chi\big(\frac{\rho}{|2\eta + \rho + 2b|}\big),
\end{split}
\end{align}
and notice that
\begin{align}\label{gar31}
H_\sigma(\xi,\eta,\rho) =  s_\sigma(\xi,\eta,\rho;0,0) - s_\sigma(\xi,\eta,\rho; \rho, \xi-\rho-\eta).
\end{align}

Recalling that $\sigma\in\oe\mathcal{M}^{3/2}_{N_0-4}$ and letting $Y:=1+|\xi|+|\eta|$, we bound
\begin{equation*}
\begin{split}
|s_\sigma(\xi,\eta,\rho;\rho,0)&- s_\sigma(\xi,\eta,\rho; \rho, \xi-\rho-\eta)|
\\
& \lesssim \mu_0(\xi-\rho,\eta) \frac{\varphi_{>0}(\Phi(\xi-\rho,\eta))}{|\Phi(\xi-\rho,\eta)|}
  \cdot \oe\langle\rho\rangle^{-12}\langle\xi-\eta-\rho\rangle Y^{1/2}.
\end{split}
\end{equation*}
To estimate this expression we recall the definition of $\mu_0$ in \eqref{ProBulk2sym}, 
use the estimate \eqref{Factor} for the factor $\mathfrak{d}$, 
and notice that $|\Phi(\xi-\rho,\eta)|\lesssim\langle\xi-\eta-\rho\rangle Y^{1/2}$ for $|\rho|\leq 2^{-10}Y$. Thus
\begin{align}
\label{gar37}
\begin{split} 
& |s_\sigma(\xi,\eta,\rho;\rho,0) - s_\sigma(\xi,\eta,\rho; \rho, \xi-\rho-\eta)| 
  \\ 
  & \lesssim \frac{\Phi(\xi-\rho,\eta)^2 + \langle \xi-\eta-\rho\rangle^3}{Y \, \langle \xi-\eta-\rho\rangle^2} 
  \frac{\varphi_{>0}(\Phi(\xi-\rho,\eta))}{|\Phi(\xi-\rho,\eta)|}
  \cdot \oe\langle\rho\rangle^{-12}\langle\xi-\eta-\rho\rangle Y^{1/2}
\\
  & \lesssim \oe\langle\xi-\eta-\rho\rangle^{4}\langle\rho\rangle^{-12},
\end{split}
\end{align}
for any $\xi,\eta,\rho\in\mathbb{Z}^2$ with $|\xi-\eta|+|\rho|\leq 2^{-8}(1+|\xi|+|\eta|)$.

To bound $|s_\sigma(\xi,\eta,\rho;0,0) - s_\sigma(\xi,\eta,\rho; \rho,0)|$ we estimate first
\begin{equation}\label{gar38}
\begin{split}
&|s_\sigma(\xi,\eta,\rho;0,0) - s_\sigma(\xi,\eta,\rho; \rho,0)|\\
&\lesssim \oe\langle\rho\rangle^{-18}Y^{3/2}\Big|\mu_0(\xi,\rho+\eta)\frac{\varphi_{>0}(\Phi(\xi,\rho+\eta))}{\Phi(\xi,\rho+\eta)}-\mu_0(\xi-\rho,\eta)\frac{\varphi_{>0}(\Phi(\xi-\rho,\eta))}{\Phi(\xi-\rho,\eta)}\Big|.
\end{split}
\end{equation}
For $\alpha\in[0,1]$ let
\begin{equation}\label{gar39}
\begin{split}
G(\alpha)&:=\mu_0(\xi-\rho+\al\rho,\eta+\al\rho)\frac{\varphi_{>0}(\Phi(\xi-\rho+\al\rho,\eta+\al\rho))}{\Phi(\xi-\rho+\al\rho,\eta+\al\rho)}\\
&=c|\xi-\eta-\rho|^{3/2}\chi\Big(\frac{|\xi-\eta-\rho|}{|\xi+\eta-\rho+2\al\rho|}\Big)\Big(\frac{\xi-\eta-\rho}{|\xi-\eta-\rho|}\cdot\frac{\xi+\eta-\rho+2\al\rho}{|\xi+\eta-\rho+2\al\rho|}\Big)^2\cdot\frac{\varphi_{>0}(P(\alpha))}{P(\alpha)}
\end{split}
\end{equation}
where $P(\alpha)=P_{\xi,\eta,\rho}(\alpha):=\Lambda(\xi-\rho+\al\rho)-\Lambda(\eta+\al\rho)-\iota_1\Lambda(\xi-\eta-\rho)$, and the identity follows from definitions. Notice that, if $|\xi-\eta|+|\rho|\leq 2^{-8}(1+|\xi|+|\eta|)$ and $\xi-\eta-\rho\neq 0$ then
\begin{equation*}
|\partial_\al P(\alpha)|\lesssim\langle\rho\rangle\langle\xi-\eta-\rho\rangle(1+|\xi|+|\eta|)^{-1/2},
\end{equation*}
\begin{equation*}
\Big|\frac{\xi-\eta-\rho}{|\xi-\eta-\rho|}\cdot\frac{\xi+\eta-\rho+2\al\rho}{|\xi+\eta-\rho+2\al\rho|}\Big|\lesssim \frac{P(\al)+\langle\xi-\eta-\rho\rangle^{3/2}}{\langle\xi-\eta-\rho\rangle(1+|\xi|+|\eta|)^{1/2}},
\end{equation*}
for all $\al\in[0,1]$ (see also \eqref{Factor}).  Therefore we can take the $\alpha$ derivative in \eqref{gar39} and estimate
\begin{equation*}
|\partial_\al G(\alpha)|\lesssim \langle\rho\rangle^2\langle\xi-\eta-\rho\rangle^4(1+|\xi|+|\eta|)^{-3/2},
\end{equation*}
for all $\al\in[0,1]$. Using \eqref{gar38}, we have 
\begin{equation*}
|s_\sigma(\xi,\eta,\rho;0,0) - s_\sigma(\xi,\eta,\rho; \rho,0)|\lesssim\oe\langle\xi-\eta-\rho\rangle^{4}\langle\rho\rangle^{-12}.
\end{equation*}
Thus $|H_\sigma(\xi,\eta,\rho)|\lesssim\oe\langle\xi-\eta-\rho\rangle^{4}\langle\rho\rangle^{-12}$, using also \eqref{gar37} and \eqref{gar31}. The desired conclusion \eqref{gar30} follows. This completes the proof of \eqref{gar9}.
\end{proof}

\begin{proof}[Proof of \eqref{gar9} when $\mu\in\{\mu_1^{\iota_2},\mu_2\}$] 
With $2^K = \oe T_\e \KK_g$, 
we estimate first the contribution of high frequencies using the symbol
bounds \eqref{ProBulk3sym}, \eqref{ProBulk5sym}, and \eqref{gar10}--\eqref{gar11}
\begin{equation}\label{gar39.5}
\begin{split}
\Big|\int_0^t\mathcal{A}_{\mu}^{>0}&(U_{\iota_1}(s),W_{\iota_2}(s),P_{>K}W'_+(s))\,ds\Big|\\
&\lesssim T2^{-K}\sup_{s\in[0,T]}\big[\|U_{\iota_1}(s)\|_{H^{10}}\|W_{\iota_2}(s)\|_{L^2}\|P_{>K}W'_+(s))\|_{L^2}\big]\lesssim\oe^2\KK_g^{-1},
\end{split}
\end{equation}
as desired. For the low frequencies we integrate by parts in $s$, as in \eqref{LM1},
\begin{align*}
\begin{split}
\int_0^t\mathcal{A}_{\mu}^{>0}(&U_{\iota_1}(s),W_{\iota_2}(s),P_{\leq K}W'_+(s))\,ds  =- \int_0^t \mathcal{T}_{\mu}^{>0}((\partial_s+i\Lambda_{\iota_1})U_{\iota_1}(s),W_{\iota_2}(s),P_{\leq K}W'_+(s))\,ds\\
&- \int_0^t \mathcal{T}_{\mu}^{>0}(U_{\iota_1}(s),(\partial_s+i\Lambda_{\iota_2})W_{\iota_2}(s),P_{\leq K}W'_+(s))\,ds\\
&- \int_0^t \mathcal{T}_{\mu}^{>0}(U_{\iota_1}(s),W_{\iota_2}(s),(\partial_s+i\Lambda)P_{\leq K}W'_+(s))\,ds\\
&+\mathcal{T}_{\mu}^{>0}(U_{\iota_1}(t),W_{\iota_2}(t),P_{\leq K}W'_+(t))-\mathcal{T}_{\mu}^{>0}(U_{\iota_1}(0),W_{\iota_2}(0),P_{\leq K}W'_+(0)).
\end{split}
\end{align*}

We examine the terms in the right-hand side of this identity. As in \eqref{LM1.5}--\eqref{LM1.7},
\begin{align*}
\begin{split}
\Big|&\int_0^t \mathcal{T}_{\mu}^{>0}((\partial_s+i\Lambda_{\iota_1})U_{\iota_1}(s),W_{\iota_2}(s),P_{\leq K}W'_+(s))\,ds\Big|\\
&+|\mathcal{T}_{\mu}^{>0}(U_{\iota_1}(t),W_{\iota_2}(t),P_{\leq K}W'_+(t))|+|\mathcal{T}_{\mu}^{>0}(U_{\iota_1}(0),W_{\iota_2}(0),P_{\leq K}W'_+(0))|\lesssim T_\e\oe^4.
\end{split}
\end{align*}
Moreover, using \eqref{gar11}--\eqref{gar12.5} and $2^K\oe\lesssim 1$, 
we have $\|\langle\nabla\rangle^{-1}(\partial_s+i\Lambda)P_{\leq K}W'_+(s))\|_{L^2}\lesssim\oe^2$.
Thus, using again \eqref{ProBulk3sym} and \eqref{ProBulk5sym}, we get
\begin{align*}
\Big|\int_0^t \mathcal{T}_{\mu}^{>0}(U_{\iota_1}(s),W_{\iota_2}(s),(\partial_s+i\Lambda)P_{\leq K}W'_+(s))\,ds
\lesssim T_\e\oe^4.
\end{align*}
Similarly, since 
$\|\langle\nabla\rangle^{-1}(\partial_s+i\Lambda_{\iota_2})P_{\leq K+4}W_{\iota_2}(s))\|_{L^2}
\lesssim\oe^2$, we also have
\begin{align*}
\Big|\int_0^t \mathcal{T}_{\mu}^{>0}(U_{\iota_1}(s),(\partial_s+i\Lambda_{\iota_2})
  P_{\leq K+4}W_{\iota_2}(s),P_{\leq K}W'_+(s))\,ds\Big|\lesssim T_\e\oe^4.
\end{align*}
Finally, notice that $\mathcal{T}_{\mu}^{>0}(P_{\leq K}U_{\iota_1}(s),
  (\partial_s+i\Lambda_{\iota_2})P_{>K+4}W_{\iota_2}(s),P_{\leq K}W'_+(s))=0$. 
Thus 
\begin{equation*}
\begin{split}
\Big|&\int_0^t \mathcal{T}_{\mu}^{>0}(U_{\iota_1}(s),(\partial_s+i\Lambda_{\iota_2})P_{>K+4}W_{\iota_2}(s),
  P_{\leq K}W'_+(s))\,ds\Big|
\\
& \lesssim T_\e\sup_{s\in[0,T]}\big[\|\langle\nabla\rangle^{10}P_{>K}U_{\iota_1}(s)\|_{L^2}
  \|\langle\nabla\rangle^{-2}
(\partial_s+i\Lambda_{\iota_2})P_{>K+4}W_{\iota_2}(s)\|_{L^2}\|P_{\leq K}W'_+(s))\|_{L^2}\big]
\\
& \lesssim T_\e\oe^4.
\end{split}
\end{equation*}

It follows from the identities and inequalities above that 
\begin{equation}\label{gar40}
\Big|\int_0^t\mathcal{A}_{\mu}^{>0}(U_{\iota_1}(s),W_{\iota_2}(s),P_{\leq K}W'_+(s))\,ds\Big|\lesssim T_\e\oe^4.
\end{equation}
The desired bounds \eqref{gar9} follow using also \eqref{gar39.5}.
\end{proof}

\section{Energy estimates II: special structures and small modulations}\label{SecEn2} 

In this section we consider the remaining contributions corresponding to small modulations. 
This case is more difficult because normal forms lead to small denominators, 
which require more subtle arguments. 
In particular, we will need to use the precise form of the desired estimates \eqref{gar0}--\eqref{gar2}, 
the assumptions on the multipliers $\mu_0,\mu_1^{\pm},\mu_2$ in Lemma \ref{ProBulk}, 
and the generic lower bounds in Propositions \ref{prop1} and \ref{prop2}.

\subsection{Small modulations and high frequencies}\label{SecEn13} 
We start with the contribution of high frequencies and prove the following:

\begin{lemma}\label{gar45} Assume that $\iota_1,\iota_2\in\{+,-\}$, $W'_+\in\mathcal{W}_+$, $W_{\iota_2}\in\mathcal{W}_{\iota_2}$, and $t\in[0,T]$.

(i) Assume that $\mu\in\{\mu_0,\mu_1^{\iota_2}\}$ and let $K$ be such that
\begin{equation}\label{gar40.4}
2^K := \oe T_\e \KK_g=\oe^{-2/3}\KK_g^{-4/3}[\log(\KK_g/\oe)]^{-2}. 
\end{equation}
Then
\begin{equation}\label{gar46.5}
\Big|\int_0^t\mathcal{A}_{\mu}^{\leq 0}(U_{\iota_1}(s),W_{\iota_2}(s),P_{>K} W'_+(s))\,ds\Big|\lesssim \oe^2\KK_g^{-1},
\end{equation}
\begin{equation}\label{gar46.51}
\Big|\int_0^t\mathcal{A}_{\mu}^{\leq 0}(U_{\iota_1}(s),P_{>K+4}W_{\iota_2}(s),P_{\leq K} W'_+(s))\,ds\Big|\lesssim \oe^2\KK_g^{-1}.
\end{equation}

(ii) Assume that 
\begin{equation}
\label{gar20}
2^{3J/2} := \oe T_\e \KK_g=\oe^{-2/3}\KK_g^{-4/3}[\log(\KK_g/\oe)]^{-2}.
\end{equation}
Then
\begin{equation}
\label{gar21}
\Big|\int_0^t \mathcal{A}_{\mu_2}^{\leq 0}(U_{\iota_1}(s),W_{\iota_2}(s),P_{> J}W'_+(s)) \Big|\lesssim \oe^2\KK_g^{-1},
\end{equation}
\begin{equation}\label{gar21.1}
\Big|\int_0^t \mathcal{A}_{\mu_2}^{\leq 0}(U_{\iota_1}(s),P_{>J+4}W_{\iota_2}(s),P_{\leq J}W'_+(s)) \Big|\lesssim \oe^2\KK_g^{-1}.
\end{equation}
\end{lemma}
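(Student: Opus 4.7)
The plan is to exploit the built-in decay of each multiplier $\mu$ in $(1+|\xi|+|\eta|)$, either directly from the symbol class or, in the case of $\mu_0$, unlocked by the small-modulation constraint via Remark \ref{RemFactor}, and then close with the trilinear Cauchy--Schwarz bound \eqref{Fact2}. The gain in $(1+|\xi|+|\eta|)$ combined with the frequency localization $P_{>K}$ or $P_{>J}$ produces a factor of $2^{-K}$ or $2^{-3J/2}$ which, against the time integral of length $T_\e$, is exactly balanced by the choices \eqref{gar40.4} and \eqref{gar20}.

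For $\mu_0$ I distinguish two cases according to $\iota_2$. When $\iota_2=+$, Remark \ref{RemFactor} converts the angular depletion factor into genuine decay: on the support of $\varphi_{\leq 0}(\Phi)$ one gets $\mathfrak{d}(\xi,\eta)\lesssim \langle\xi-\eta\rangle(1+|\xi|+|\eta|)^{-1}$, hence $|\mu_0\varphi_{\leq 0}(\Phi)|\lesssim \langle\xi-\eta\rangle^{5/2}(1+|\xi|+|\eta|)^{-1}$. When $\iota_2=-$, the phase $\Lambda(\xi)+\Lambda(\eta)\mp\Lambda(\xi-\eta)$ is bounded below by $c|\xi|^{3/2}$ on the high-high-low support of $\chi(|\xi-\eta|/|\xi+\eta|)$, so $\varphi_{\leq 0}(\Phi)$ forces $|\xi|,|\eta|\lesssim 1$ and the claim is vacuous in the high-frequency regions $\{|\xi|>2^K\}$ or $\{|\eta|>2^{K+4}\}$. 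The same dichotomy applies to $\mu_1^{\iota_2}$, where \eqref{ProBulk3sym} supplies the needed $(1+|\xi|+|\eta|)^{-1}$ decay in the $\iota_2=+$ case directly. For $\mu_2$ no case split is needed: \eqref{ProBulk5sym} already gives the stronger decay $(1+|\xi|+|\eta|)^{-3/2}$; for \eqref{gar21.1} the unrestricted support is compensated by the rapid decay of $\widehat{U_{\iota_1}}$, which is overwhelming when $|\xi-\eta|\approx|\eta|$ is large (since $|\xi|\leq 2^J$ forces this configuration).

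With these multiplier bounds in hand, I apply \eqref{Fact2} putting $U_{\iota_1}$ as the low-frequency factor and absorbing the polynomial weight $\langle\xi-\eta\rangle^C$ (with $C\leq 6$) into $\|\widehat{U_{\iota_1}}\|_{L^1}$ via $\|U_{\iota_1}(s)\|_{H^{N_0}}\lesssim \oe$, while the remaining $L^2$ norms of $W_{\iota_2}$ and $W'_+$ contribute another $\oe^2$. This yields pointwise-in-time integrands of size $2^{-K}\oe^3$ in part (i) and $2^{-3J/2}\oe^3$ in part (ii). Time integration over $[0,t]\subset[0,T_\e]$ together with the choices of $K$ and $J$ reduce the estimates to $T_\e 2^{-K}\oe^3 = \oe^2\KK_g^{-1}$ and $T_\e 2^{-3J/2}\oe^3 = \oe^2\KK_g^{-1}$, as required.

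I do not anticipate a serious obstacle here: this is the more robust half of the small-modulation analysis, and the only subtlety is the bookkeeping of the two cases of $\iota_2$ in the $\mu_0$ and $\mu_1^{\iota_2}$ estimates, which Remark \ref{RemFactor} is tailor-made to handle. The genuinely hard work lies in the complementary low-frequency region $|\xi|\leq 2^K$, which will demand the small-divisor bounds of Propositions \ref{prop1} and \ref{prop2} and iterated partial normal forms.
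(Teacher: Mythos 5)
Your proof is correct and takes essentially the same route as the paper: extract $(1+|\xi|+|\eta|)^{-1}$ decay from $\mu_0$ in the small-modulation region via Remark~\ref{RemFactor}, note that it is built into $\mu_1^\pm$ and (with power $3/2$) into $\mu_2$, observe that the $\iota_2=-$ configurations and the $P_{>K+4}W_{\iota_2}$-$P_{\leq K}W'_+$ configurations for $\mu_0,\mu_1^\pm$ are forced to vanish by the angular/modulation support, and close with \eqref{Fact2} and the time integration against the choice of $K$ (resp.\ $J$). One minor imprecision: no dichotomy in $\iota_2$ is actually needed for $\mu_1^{\iota_2}$ since \eqref{ProBulk3sym} already furnishes the $(\langle\xi\rangle+\langle\eta\rangle)^{-1}$ gain in both cases, as the paper points out.
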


\begin{proof} 
We start with the proof of \eqref{gar46.5} when $\mu=\mu_0$. If $\iota_2=+$ then the symbol $\mu_0$ also gains a derivative in the support of the operator,
\begin{equation}\label{gar46}
|\mu_0(\xi,\eta)|\lesssim\langle\xi-\eta\rangle^{4}(1+|\xi|+|\eta|)^{-1}\quad\text{ if }\quad|\Phi_{\iota_1+}(\xi,\eta)|\lesssim 1. 
\end{equation}
This follows from \eqref{Factor}. Thus, using \eqref{gar10}--\eqref{gar11}, for any $s\in[0,t]$,
\begin{equation*}
|\mathcal{A}_{\mu_0}^{\leq 0}(U_{\iota_1}(s),W_+(s),P_{>K}W'_+(s))|\lesssim 2^{-K}\|U_{\iota_1}(s)\|_{H^{12}}\|W_+(s)\|_{L^2}\|P_{>K}W'_+(s)\|_{L^2}
  \lesssim 2^{-K}\oe^3.
\end{equation*}
Since $2^K=\oe T_\e\KK_g$, this suffices to prove \eqref{gar46.5} when $\iota_2=+$.

On the other hand, if $\iota_2=-$ then the operators $\mathcal{A}_{\mu_0}^{\leq 0}$ are nontrivial only when the frequency $|\xi-\eta|$ is large,
i.e. $\mathcal{A}_{\mu_0}^{\leq 0}(P_{\leq K-4}U_{\iota_1}(s),W_-(s),P_{>K}W'_+(s))=0$. Thus, for any $s\in[0,t]$,
\begin{equation*}
\begin{split}
|\mathcal{A}_{\mu_0}^{\leq 0}(U_{\iota_1}(s),W_-(s),P_{>K}W'_+(s))|\lesssim \|P_{>K-4}U_{\iota_1}(s)\|_{H^{12}}\|W_+(s)\|_{L^2}\|P_{>K}W'_+(s)\|_{L^2}
  \lesssim 2^{-K}\oe^3,
\end{split}
\end{equation*} 
using again \eqref{gar10}--\eqref{gar11}. The desired bounds \eqref{gar46.5} follow when $\iota_2=-$ as well.

The bounds \eqref{gar46.5} follow in a similar way if $\mu=\mu_1^{\iota_2}$. In fact, this case is easier because symbol bounds similar to \eqref{gar46} hold (due to the assumptions \eqref{ProBulk3sym}), and one does not need to consider two different cases. The bounds \eqref{gar21} hold as well, by the same argument and using the assumptions \eqref{ProBulk5sym}.

To prove \eqref{gar46.51} we notice that for $s\in[0,t]$ and $\mu\in\{\mu_0,\mu_1^{\iota_2}\}$
\begin{equation*}
\begin{split}
|\mathcal{A}_{\mu}^{\leq 0}&(U_{\iota_1}(s),P_{>K+4}W_{\iota_2}(s),P_{\leq K}W'_+(s))|=|\mathcal{A}_{\mu}^{\leq 0}(P_{>K}U_{\iota_1}(s),P_{>K+4}W_{\iota_2}(s),P_{\leq K}W'_+(s))|\\
&\lesssim \|P_{>K}U_{\iota_1}(s)\|_{H^{12}}\|W_+(s)\|_{L^2}\|P_{>K}W'_+(s)\|_{L^2}
  \lesssim 2^{-2K}\oe^3,
\end{split}
\end{equation*} 
which as claimed. The bounds \eqref{gar21.1} are similar, which completes the proof of the lemma.
\end{proof}

\subsection{Small modulations and low frequencies}\label{SecEn14} 

In this subsection we complete the proof of Proposition \ref{strategy}. We have to estimate the contribution of small modulations and small frequencies, i.e. expressions of the form
\begin{equation*}
\int_0^t\mathcal{A}_{\mu}^{\leq 0}(U_{\iota_1}(s),P_{\leq K+4}W_{\iota_2}(s),P_{\leq K} W'_+(s))\,ds,
\end{equation*}
for $\mu\in\{\mu_0,\mu_1^{\iota_2}\}$ (and a similar expression when $\mu=\mu_2$, with $K$ replaced by $J$). 

\subsubsection{The operators $\mathcal{A}_{\mu_2}$} We start with the simpler case when $\mu=\mu_2$ and prove the following:

\begin{lemma}\label{lasty1}
Assume that $\iota_1,\iota_2\in\{+,-\}$, $W'_+\in\mathcal{W}_+$, $W_{\iota_2}\in\mathcal{W}_{\iota_2}$, $2^{3J/2} = \oe T_\e \KK_g$ as in \eqref{gar20}, and $t\in[0,T]$. Then
\begin{equation}\label{gar22}
\Big|\int_0^t \mathcal{A}_{\mu_2}^{\leq 0}(U_{\iota_1}(s),P_{\leq J+4}W_{\iota_2}(s),P_{\leq J}W'_+(s)) \Big|\lesssim \oe^2\KK_g^{-1}.
\end{equation}
\end{lemma}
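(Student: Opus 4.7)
The plan is to perform a single normal form (integrate by parts once in time), exploiting the fact that the $3/2$-derivative gain of $\mu_2$ recorded in \eqref{ProBulk5sym} exactly balances the $\langle\xi\rangle^{3/2}$ loss from $|\Phi|^{-1}$ guaranteed by Proposition \ref{prop1}, up to logarithmic losses. This makes $\mathcal{A}_{\mu_2}^{\leq 0}$ the most accessible of the three small-modulation bulk terms, whereas the forthcoming treatment of $\mathcal{A}_{\mu_0}^{\leq 0}$ and $\mathcal{A}_{\mu_1^{\pm}}^{\leq 0}$ will require iterated normal forms and reality structures. The main challenge here will be the derivative loss introduced when time-differentiation falls on $W_{\iota_2}$ or $W'_+$.

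First I would introduce the normal form operator
\[
\mathcal{T}_{\mu_2}^{\leq 0}(F,G,H) := \sum_{\xi,\eta\in\mathbb{Z}^2} \frac{\widetilde{\mu}_2(\xi,\eta)\, \varphi_{\leq 0}(\Phi(\xi,\eta))}{i\Phi(\xi,\eta)} \widehat{F}(\xi-\eta)\widehat{G}(\eta)\widehat{\overline{H}}(-\xi),
\]
with $\widetilde{\mu}_2(\xi,\eta) := \mu_2(\xi,\eta)\varphi_{\leq J+4}(\eta)\varphi_{\leq J}(\xi)$. Because $g/\sigma\notin\mathcal{N}\supseteq\mathcal{N}_{1/2}$ and the mean-zero assumption forces $\widehat{U}(0)=\widehat{W}(0)=0$ (using also the paradifferential definition of $U$), Proposition \ref{prop1} applies with $\kappa=1/2$. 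A case analysis on the dyadic sizes of $|\xi|,|\eta|,|\xi-\eta|$, combined with \eqref{ProBulk5sym}, yields the pointwise symbol estimate $|\widetilde{\mu}_2/\Phi|\,\varphi_{\leq 0}(\Phi) \lesssim [\log(2/\e)]^{3/2}\langle\xi-\eta\rangle^{11}$. The polynomial growth in $\langle\xi-\eta\rangle$ is absorbed by the Sobolev decay of $\widehat{U}(\xi-\eta)$, made possible by our generous $N_0=30$, so \eqref{Fact2} produces the trilinear estimate $|\mathcal{T}_{\mu_2}^{\leq 0}(F,G,H)| \lesssim [\log(2/\e)]^{3/2}\|F\|_{H^{13}}\|G\|_{L^2}\|H\|_{L^2}$.

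Next, integration by parts in $s$ produces boundary terms and three time-derivative integrals. The boundary contributions are bounded by $[\log(2/\e)]^{3/2}\oe^3$, which is much smaller than $\oe^2\KK_g^{-1}$ for $\e$ small relative to $\KK_g^{-1}$. The term with $\partial_s$ hitting $U_{\iota_1}$ uses $(\partial_s+i\iota_1\Lambda)U_{\iota_1}\in\oe^2 H^{N_0-2}$ from \eqref{gar10}, giving a contribution bounded by $T_\e[\log(2/\e)]^{3/2}\oe^4 \ll \oe^2\KK_g^{-1}$ in view of $T_\e\oe^2\lesssim 1$.

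The main obstacle is the two terms where $\partial_s$ hits $P_{\leq J+4}W_{\iota_2}$ or $P_{\leq J}W'_+$. By \eqref{gar12}--\eqref{gar12.5}, the dominant contribution comes from the paraproduct $T_{V\cdot\zeta}W$ with a full derivative loss $\|P_{\leq J+4}\dot{W}\|_{L^2}\lesssim \oe^2\cdot 2^J$; a naive bound $T_\e[\log]^{3/2}\oe^4\cdot 2^J$ falls just short of $\oe^2\KK_g^{-1}$ when $\e$ is very small. To close the estimate, following the strategy outlined in the introduction around \eqref{evol17}, I would expand $T_{V\cdot\zeta}W$ as a bilinear operator in the profiles of $\Im U$ and $W$, producing a quartic form whose oscillatory factor is the four-way modulation $\Psi_\iota$. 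In the region of large $|\Psi_\iota|$, a second normal form transformation using the lower bound $|\Psi_\iota|^{-1} \lesssim \langle\xi\rangle^{1/2}$ from Proposition \ref{prop2} closes the estimate, while in the region of small $|\Psi_\iota|$ a depletion structure analogous to the factor $\mathfrak{d}$ of \eqref{ProBulk2sym} reduces the effective derivative loss from a full derivative to a half, replacing $2^J$ by $2^{J/2}$ and yielding a contribution $T_\e[\log]^{C}\oe^4\cdot 2^{J/2}\sim \KK_g^{-2/3}\e^{19/9}[\log(2/\e)]^{-7/6}$, which is $\ll \KK_g\e^2 = \oe^2\KK_g^{-1}$ under $\e\leq\KK_g^{-4}$. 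The precise calibration $2^{3J/2}=\oe T_\e\KK_g$ in \eqref{gar20} is exactly what enables this balance.
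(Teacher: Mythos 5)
Your proposal follows the same overall scheme as the paper's proof: a single normal form (integration by parts in $s$), handling of the boundary and $\partial_s U_{\iota_1}$ terms by direct estimates, and then a second (iterated) normal form on the leading contribution $T_{V_1\cdot\zeta}W$, split by the size of the four-way modulation $\Psi_{\iota_1\iota_3}$, with the depletion factor $(\eta-\rho)\cdot(\eta+\rho)$ supplying the needed $2^{J/2}$ gain in the small-$\Psi$ region. The order of magnitude you obtain, $T_\e\oe^4\,2^{J/2}$, is exactly the bottleneck the paper's calibration $2^{3J/2}=\oe T_\e\KK_g$ is designed to accommodate.

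There is, however, a genuine overshoot in one step. You invoke Proposition~\ref{prop2} to produce the lower bound $|\Psi_\iota|^{-1}\lesssim\langle\xi\rangle^{1/2}$ in the large-$\Psi$ region, but Proposition~\ref{prop2} is not the right tool here. Its conclusion \eqref{ml42} is a disjunction (at least one of the two modulations is not small), so to extract a quantitative lower bound on $\Psi$ from it, one must first arrange that the cubic modulation $\Phi_{\iota_1+}$ is {\em much smaller} than the target $2^{-k/2}(2^{k_1}+2^{k_3})^{-2}$; the cutoff $\varphi_{\leq 0}(\Phi)$ only guarantees $|\Phi|\lesssim 1$, which is far too weak. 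Moreover, the hypotheses $(\xi-\eta,\iota_1)\neq(\eta-\rho,\iota_3)$ (no trivial resonance) and $(|\xi-\eta|+|\eta-\rho|)^{16}\leq b'_y|\xi|$ would also need to be verified, which is nontrivial and requires the extra reality structure and preliminary frequency restrictions developed later in Lemma~\ref{lasty2}. None of this is necessary for $\mu_2$: because $\mu_2$ already gains $3/2$ derivatives, the paper simply splits the quartic modulation at $|\Psi|\sim 1$, uses the {\em trivial} bound $|\Psi|^{-1}\lesssim 1$ in the large region (paying the extra factor $\oe\,2^{3k/2}\ll 1$ from the new bilinear term, which is harmless), and reserves Proposition~\ref{prop2}—with its threshold $2^B\approx\KK_g^{-1}2^{-k/2}(2^{k_1}+2^{k_3})^{-2}$ and the reality cancellation of trivial resonances—for the more delicate terms $\mu_0$ and $\mu_1^{\pm}$ where the symbol gains only one derivative. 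Replacing your appeal to Proposition~\ref{prop2} by the trivial large-$\Psi$ bound repairs the gap and brings your argument into alignment with the paper's.
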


\begin{proof}
Notice that $\widehat{U}(0,s) = \widehat{W}(0,s)$ (see \eqref{defscalarunk}, \eqref{defW_n}, and \eqref{Taf}),
and therefore we can insert discrete Littlewood-Paley operators. 
For \eqref{gar22} it suffices to prove that
\begin{equation}
\label{gar23}
\Big|\int_0^t \mathcal{A}_{\mu_2}^{\leq 0}(P_{k_1}U_{\iota} (s), P_{k_2}W_{\iota_2}(s),P_{k}W'_+(s))\,ds\Big| 
\lesssim \oe^2\KK_g^{-1} [\log(1/\oe)]^{-1}2^{-k_1},
\end{equation}
for any $W'\in \mathcal{W}_+$, $W_{\iota_2} \in \mathcal{W}_{\iota_2}$, 
and $k,k_1,k_2\in[0,\infty)\cap\Z$ satisfying
\begin{equation}
\label{gar23.5}
\begin{split}
& |\max(k,k_1,k_2)-\mathrm{med}(k,k_1,k_2)|\leq 4,
\\
& \max\{2^k,2^{k_2}\}\leq 2^{J+4}\lesssim \oe^{-4/9} \log(1/\oe)^{-4/3} \KK_g^{-8/9}.
\end{split}
\end{equation}

{\bf{Step 1.}} To prove \eqref{gar23} we begin by using the lower bound 
\eqref{SDbound0} to integrate by parts in $s$.
More precisely, it follows from Proposition \ref{prop1} and \eqref{ml12} that, for $g \not\in \mathcal{N}$,
\begin{equation}
\label{gar24}
\frac{1}{|\Phi_{\iota_1\iota_2}(\xi,\eta)|}\lesssim 2^{3\overline{k}/2}(1+|\overline{k}|)^{3/2}2^{4k_1}, \qquad \overline{k}:=\max(k,k_1,k_2),
\end{equation}
for any $(\xi,\eta)$ in the support of the operator in the left-hand side of \eqref{gar23}. In analogy with \eqref{LMnot1} we define
\begin{align}
\label{SMnot1}
&\mathcal{T}_{\mu_2}^{\leq 0}(F,G,H):=\sum_{\xi,\eta\in\Z^2}\mu_2(\xi,\eta)\widehat{F}(\xi-\eta) \widehat{G}(\eta)\widehat{\overline{H}}(-\xi)
  \cdot \frac{\varphi_{\leq 0}(\Phi(\xi,\eta))}{i\Phi(\xi,\eta)}.
\end{align}
We integrate by parts in $s$, as in \eqref{LM1}, to obtain
\begin{align}
\label{gar25}
\int_0^t\mathcal{A}_{\mu_2}^{\leq 0}(P_{k_1}U_{\iota_1}(s),P_{k_2}W_{\iota_2}(s),P_kW'_+(s))\,ds = I + II_1 + II_2 + III,
\end{align}
where
\begin{align}
\label{gar25.5}
\begin{split}
I &:=- \int_0^t \mathcal{T}_{\mu_2}^{\leq 0}((\partial_s+i\Lambda_{\iota_1})P_{k_1}U_{\iota_1}(s),P_{k_2}W_{\iota_2}(s),P_{k}W'_+(s))\,ds,
\\
II_1 &:=- \int_0^t \mathcal{T}_{\mu_2}^{\leq 0}(P_{k_1}U_{\iota_1}(s),(\partial_s+i\Lambda_{\iota_2})P_{k_2}W_{\iota_2}(s),P_{k}W'_+(s))\,ds,
\\
II_2 &:=- \int_0^t \mathcal{T}_{\mu_2}^{\leq 0}(P_{k_1}U_{\iota_1}(s),P_{k_2}W_{\iota_2}(s),(\partial_s+i\Lambda)P_{k}W'_+(s))\,ds,
\\
III &:=\mathcal{T}_{\mu_2}^{\leq 0}(P_{k_1}U_{\iota_1}(t),P_{k_2}W_{\iota_2}(t),P_{k}W'_+(t)) 
  - \mathcal{T}_{\mu_2}^{\leq 0}(P_{k_1}U_{\iota_1}(0),P_{k_2}W_{\iota_2}(0),P_{k}W'_+(0)).
\end{split}
\end{align}
Notice that the denominators $\Phi=\Phi_{\iota_1\iota_2}$ in the definition above do not vanish, due to \eqref{gar24}.

Using the basic trilinear estimate \eqref{Fact2}, the small divisors bound \eqref{gar24}, 
the bound on $\mu_2$ in \eqref{ProBulk5sym}, and \eqref{gar10}--\eqref{gar11} we bound, for any $s\in[0,T]$
\begin{equation*}
\begin{split}
|\mathcal{T}_{\mu_2}^{\leq 0}&(P_{k_1}U_{\iota_1}(s),P_{k_2}W_{\iota_2}(s),P_{k}W'_+(s))|
\\
& \lesssim 2^{3\overline{k}/2}(1+|\overline{k}|)^{3/2}\cdot 2^{-3\overline{k}/2} 2^{12k_1} 
  \|P_{k_1}U_{\iota_1}(s)\|_{H^2} \|P_{k_2}W_{\iota_2}(s)\|_{L^2} \|P_{k}W'_+(s)\|_{L^2}\\
  &\lesssim \oe^3 2^{-4k_1} (1+|\overline{k}|)^{3/2}.
\end{split}
\end{equation*}
This is clearly bounded by the right-hand side of \eqref{gar23}, also in view of \eqref{gar23.5}, as desired.

Next we consider the space-time contributions $I,II_1,II_2$. These are easy to estimate when $k_1$ is large. 
Indeed, using \eqref{Fact2}, \eqref{gar24}, and \eqref{gar10}--\eqref{gar12.5} we bound
\begin{align}
 \label{gar25.9}
\begin{split}
|I|+|II_1|+|II_2|\lesssim T_\e \cdot 2^{3\overline{k}/2}(1+|\overline{k}|)^{3/2} \cdot 2^{-3\overline{k}/2}2^{12k_1} \cdot \oe^42^{3\overline{k}/2}2^{-20k_1}\lesssim T_\e \oe^4 2^{1.6\overline{k}} 2^{-6k_1}.
\end{split}
\end{align}
This suffices to prove the desired bounds if $k_1\geq\overline{k}-30$. 

On the other hand, if $k_1\leq \overline{k}-30$ then we may assume that $|k-k_2|\leq 4$, $|\overline{k}-k|\leq 4$, see \eqref{gar23.5}.
We may also assume that $\iota_2=+$, otherwise the modulation cannot be small and the operators are trivial.

We can now estimate $|I|$ easily, using \eqref{Fact2}, \eqref{gar24}, and \eqref{gar10}--\eqref{gar11} as before,
\begin{equation*}
|I|\lesssim 2^{3k/2}(1+|k|)^{3/2}2^{12k_1}2^{-3k/2}\cdot T_\e \oe^4 2^{-20k_1} \lesssim 2^{-2k_1} (T_\e\oe \KK_g)^{1.1} \oe^3,
\end{equation*}
which suffices. Then we notice that the two integrals $II_1$ and $II_2$ are similar,
after changes of variables. In
the case of small modulations there is no meaningful cancellation between these terms, 
because the fractions $1/\Phi(\xi,\eta)$ vary wildly as the variables $\xi$ and $\eta$ move to nearby lattice points.
Therefore, to prove \eqref{gar23} it remains to show that
\begin{equation}
\label{gar26}
\Big| \int_0^t \mathcal{T}_{\mu_2}^{\leq 0} (P_{k_1} U_{\iota_1}(s), P_{k_2}(\partial_s+i\Lambda)W(s), P_{k}W'(s))\,ds \Big|
  \lesssim (\oe^2\KK_g^{-1}) [\log(1/\oe)]^{-1}2^{-k_1},
\end{equation}
for any $t\in[0,T]$, $\iota_1\in\{+,-\}$, $W,W' \in\mathcal{W}_+$, 
and $k,k_1,k_2\in[-4,\infty)\cap\Z$ satisfying
\begin{equation} 
\label{gar26.5}
|k-k_2|\leq 4, \qquad k_1\leq k-20, \qquad 2^k\lesssim (T_\varep\oe\KK_g)^{2/3}.
\end{equation}

The formula \eqref{gar12} and the decomposition \eqref{ExpV} show that
\begin{equation}
\label{gar27}
(\partial_t + i\Lambda)W = - iT_{V_1\cdot\zeta}W-iT_{(V-V_1)\cdot\zeta}W- iT_{\Sigma- \Lambda}W + \mathcal{E}_W,
\end{equation}
for all $W\in\mathcal{W}_+$. As a consequence of \eqref{gar12.5} and the restriction $\oe 2^{k_2}\lesssim 1$,
\begin{equation*}
\|P_{k_2}T_{(V-V_1)\cdot\zeta}W\|_{L^2}+\|P_{k_2}T_{\Sigma-\Lambda}W\|_{L^2} + \|P_{k_2}\mathcal{E}_W\|_{L^2} \lesssim \oe^2 2^{k_2/2}.
\end{equation*}
Therefore, using \eqref{Fact2}, the bounds \eqref{gar24} and \eqref{ProBulk5sym}, and \eqref{gar10}--\eqref{gar11}, we estimate
\begin{equation}\label{gar27.1}
\begin{split}
\Big|\int_0^t \mathcal{T}_{\mu_2}^{\leq 0}(P_{k_1} U_{\iota_1}(s),P_{k_2}(-iT_{(V-V_1)\cdot\zeta}W-iT_{\Sigma - \Lambda}W + \mathcal{E}_W)(s),P_{k}W'_+(s))\,ds\Big|
\\
\lesssim T_\e \cdot 2^{3k/2}|k|^{3/2}  \cdot 2^{-3k/2} 2^{12k_1} \cdot \oe 2^{-20k_1} \cdot \oe^3 2^{k/2} 
\\
\lesssim 2^{-2k_1} \cdot T_\e \oe^4 (\KK_g T_\e \oe)^{1/3} [\log(1/\oe)]^{3/2}.
\end{split}
\end{equation}
having used the restriction on the frequencies \eqref{gar23.5}.
This is consistent with the desired bounds \eqref{gar26}, once we recall that $\KK_gT_\e \oe\approx \KK_g^{-4/3}\oe^{-2/3}[\log(1/\oe)]^{-2}$, see \eqref{al0}.

After these reductions, for \eqref{gar26} it remains to prove that  
\begin{equation}
\label{gar26.2}
\Big| \int_0^t \mathcal{T}_{\mu_2}^{\leq 0} (P_{k_1} U_{\iota_1}(s), P_{k_2}T_{V_1\cdot\zeta}W(s), P_{k}W'(s))\,ds \Big|
  \lesssim (\oe^2\KK_g^{-1}) [\log(1/\oe)]^{-1}2^{-k_1},
\end{equation}
for any $k,k_1,k_2\in[-4,\infty)\cap\Z$ satisfying \eqref{gar26.5}.
We need to be more careful here, because estimating this term in the same way would not allow us to reach the desired time $T_\e$ in \eqref{al0}.

\medskip
{\bf{Step 2.}} Using the definitions we can write the integral in the left-hand side of \eqref{gar26.2} as
\begin{equation}\label{gar99}
\begin{split}
C\int_0^t\sum_{\xi,\eta,\rho\in\Z^2} \mu_2(\xi,\eta) &\frac{\varphi_{\leq 0}(\Phi(\xi,\eta))}{\Phi(\xi,\eta)}
  \widehat{P_{k_1}U_{\iota_1}}(\xi-\eta,s) \varphi_k(\xi)\overline{\widehat{W'}}(\xi,s)
\\
& \qquad \times\varphi_{k_2}(\eta)\chi\Big(\frac{|\eta-\rho|}{|\eta+\rho|}\Big)\frac{(\eta-\rho)\cdot(\eta+\rho)}{|\eta-\rho|^{1/2}}
  \widehat{\Im U}(\eta-\rho,s)\widehat{W}(\rho,s)\,ds.
\end{split}
\end{equation} 
We observe that the factor $(\eta-\rho)\cdot(\eta+\rho)$ is a depletion factor similar to the factor $\mathfrak{d}$ in \eqref{ProBulk2sym}.
According to \eqref{Factor}, this factor is expected to gain $1/2$ derivative in the region where the $(\eta,\rho)$ modulation is small. We will the exploit this fact,
and alternatively integrate by parts in time in the remaining region where the $(\eta,\rho)$ modulation is large.

To implement this strategy we define the {\it{four-wave modulation functions}}
\begin{equation}\label{gar100}
\Psi_{\iota_1\iota_3}(\xi,\eta,\rho):=\Lambda(\xi)-\Lambda(\rho)-\iota_1\Lambda(\xi-\eta)-\iota_3\Lambda(\eta-\rho),
\end{equation}
where $\iota_1,\iota_3\in\{+,-\}$.
We use these functions to decompose the integrals into low and high four-wave modulations. 
More precisely, we define the multipliers
\begin{equation}\label{gar101}
\begin{split}
\beta_\ast(\xi,\eta,\rho) := & \varphi_\ast(\Psi_{\iota_1\iota_3}(\xi,\eta,\rho)) \cdot 
  \mu_2(\xi,\eta)\varphi_{k_1}(\xi-\eta)\varphi_k(\xi)\varphi_{k_2}(\eta)
\\
& \qquad \times\frac{\varphi_{\leq 0}(\Phi_{\iota_1+}(\xi,\eta))}{\Phi_{\iota_1+}(\xi,\eta)}
  \chi\Big(\frac{|\eta-\rho|}{|\eta+\rho|}\Big)\frac{(\eta-\rho)\cdot(\eta+\rho)}{|\eta-\rho|^{1/2}},
\end{split}
\end{equation}
where $\ast\in\{\leq 0,>0\}$. Then we define the integrals
\begin{equation}\label{gar102}
\mathcal{L}_{k,k_1,k_2}^\ast 
  := \int_0^t\sum_{\xi,\eta,\rho\in\Z^2}\beta_\ast(\xi,\eta,\rho)\widehat{U_{\iota_1}}(\xi-\eta,s)\widehat{U_{\iota_3}}(\eta-\rho,s)
  \widehat{W}(\rho,s)\overline{\widehat{W'}}(\xi,s),
\end{equation}
and observe that, in view of \eqref{gar99}, for \eqref{gar26.2} it suffices to prove that
\begin{equation}\label{gar103}
\big|\mathcal{L}_{k,k_1,k_2}^{\leq 0}\big| + \big|\mathcal{L}_{k,k_1,k_2}^{>0}\big|\lesssim (\oe^2\KK_g^{-1})\log(1/\oe)^{-1}2^{-k_1},
\end{equation}
for any $\iota_1,\iota_3\in\{+,-\}$ and $k,k_1,k_2\in[-4,\infty)\cap\Z$ satisfying \eqref{gar26.5}.

We prove now the inequalities \eqref{gar103}. 
We will use the following quartic analog of the bounds \eqref{Fact2}: for any functions $F,G,H,H'$ and any symbol $m$ we have
\begin{equation}
\label{Fact3}
\sum_{\xi,\eta,\rho\in\Z^2}\big|m(\xi,\eta,\rho)\widehat{F}(\xi-\eta) \widehat{G}(\eta-\rho)\widehat{H}(\rho)\overline{\widehat{H'}}(\xi)\big|
  \lesssim\|m\|_{L^\infty}\|F\|_{H^2}\|G\|_{H^2}\|H\|_{L^2}\|H'\|_{L^2}.
\end{equation}
This follows easily using the Cauchy--Schwarz inequality.

We observe that
\begin{equation*}
\Phi_{\iota_1+}(\xi,\eta)+\Phi_{\iota_3+}(\eta,\rho)=\Psi_{\iota_1\iota_3}(\xi,\eta,\rho).
\end{equation*}
In particular $|\Phi_{\iota_3+}(\eta,\rho)|\lesssim 1$ in the support of the sum defining $\mathcal{L}_{k,k_1,k_2}^{\leq 0}$. Using the estimates on symbols \eqref{ProBulk5sym}, \eqref{Factor}, and \eqref{gar24}, we can bound
\begin{equation*}
|\beta_{\leq 0}(\xi,\eta,\rho)| \lesssim \varphi_k(\xi)\varphi_{k_2}(\eta)\varphi_{k_1}(\xi-\eta) \varphi_{\leq k-10}(\eta-\rho)
  \cdot 2^{k/2} |k|^{3/2} \cdot 2^{12k_1} \langle\eta-\rho\rangle^2.
\end{equation*}
Thus, using \eqref{Fact3}, \eqref{gar10}--\eqref{gar11}, and \eqref{gar26.5}, we obtain
\begin{equation*}
\big|\mathcal{L}_{k,k_1,k_2}^{\leq 0}\big| \lesssim T_\e \oe^4 \cdot 2^{k/2} |k|^{3/2} 2^{-2k_1} 
  \lesssim 2^{-2k_1} T_\varep\oe^4(\KK_g T_\e \oe)^{1/3} [\log(1/\oe)]^{3/2},
\end{equation*}
as desired (compare with \eqref{gar27.1}).

{\bf{Step 3.}} To estimate $\big|\mathcal{L}_{k,k_1,k_2}^{>0}\big|$ we have to integrate by parts in time again. We define
\begin{equation*}
\mathcal{J}_{k,k_1,k_2}^{\geq 0}(F,G,H,H') := \sum_{\xi,\eta,\rho\in\Z^2} \frac{\beta_{\geq 0}(\xi,\eta,\rho)}{i\Psi_{\iota_1\iota_3}(\xi,\eta,\rho)}
  \widehat{F}(\xi-\eta)\widehat{G}(\eta-\rho)\widehat{H}(\rho)\overline{\widehat{H'}}(\xi),
\end{equation*}
where  $\beta_{\geq 0}$ as in \eqref{gar101}. Similarly to \eqref{gar25}--\eqref{gar25.5} we can write
\begin{equation}
\label{gar105}
\mathcal{L}^{>0}_{k,k_1,k_2} = L_1+L_2+L_3+L_4+L_5,
\end{equation}
where
\begin{equation*}
\begin{split}
& L_1:=-\int_0^t\mathcal{J}_{k,k_1,k_2}^{\geq 0}
((\partial_s+i\Lambda_{\iota_1})U_{\iota_1}(s),U_{\iota_3}(s),W(s),W'(s))\,ds,
\\
& L_2:=-\int_0^t\mathcal{J}_{k,k_1,k_2}^{\geq 0}
(U_{\iota_1}(s),(\partial_s+i\Lambda_{\iota_3})U_{\iota_3}(s),W(s),W'(s))\,ds,
\\
& L_3:=-\int_0^t\mathcal{J}_{k,k_1,k_2}^{\geq 0}
(U_{\iota_1}(s),U_{\iota_3}(s),(\partial_s+i\Lambda)W(s),W'(s))\,ds,
\\
& L_4:=-\int_0^t\mathcal{J}_{k,k_1,k_2}^{\geq 0}
(U_{\iota_1}(s),U_{\iota_3}(s),W(s),(\partial_s+i\Lambda)W'(s))\,ds,
\\
& L_5:=\mathcal{J}_{k,k_1,k_2}^{\geq 0}(U_{\iota_1}(t),U_{\iota_3}(t),W(t),W'(t))
-\mathcal{J}_{k,k_1,k_2}^{\geq 0}(U_{\iota_1}(0),U_{\iota_3}(0),W(0),W'(0)).
\end{split}
\end{equation*}

The definition \eqref{gar101} and the bounds \eqref{ProBulk5sym} and \eqref{gar24} show that
\begin{equation}
\label{gar106}
\Big| \frac{\beta_{\geq 0}(\xi,\eta,\rho)}{i\Psi_{\iota_1\iota_3}(\xi,\eta,\rho)}\Big|
  \lesssim \varphi_k(\xi)\varphi_{k_2}(\eta)\varphi_{k_1}(\xi-\eta)\varphi_{\leq k-10}(\eta-\rho)\cdot 2^{k}|k|^{3/2}2^{12k_1}\langle\eta-\rho\rangle^2.
\end{equation}
Therefore, using \eqref{Fact3}, \eqref{gar23.5}, and the bounds \eqref{gar10}--\eqref{gar12.5}, we can estimate
\begin{equation*}
|L_1|+|L_2|+|L_3|+|L_4|+|L_5| \lesssim T_\e \cdot \oe^5 2^{2k}|k|^{3/2} 2^{-2k_1} \lesssim  2^{-2k_1}T_\varep\oe^5(\KK_gT_\e \oe)^{4/3}[\log(1/\oe)]^{3/2},
\end{equation*} 
in view of the constraints \eqref{gar26.5}. The desired bounds for $|\mathcal{L}^{>0}_{k,k_1,k_2}|$ in \eqref{gar103} follow using also \eqref{al0}. This completes the proof of the lemma.
\end{proof}

\subsubsection{The operators $\mathcal{A}_{\mu_0}$ and $\mathcal{A}_{\mu_1^{\pm}}$} In this section we complete the proof of the bounds \eqref{gar0}--\eqref{gar1b} in Proposition \ref{strategy}, and therefore of the main bootstrap Proposition \ref{MainBootstrap}. More precisely:

\begin{lemma}\label{gar50}
For any $t\in[0,T]$ and $\iota_1,\iota_2\in\{+,-\}$ we have
\begin{equation}
\label{gar90}
\Big| \Re \int_0^t\mathcal{A}_{\mu_0}^{\leq 0}(iU_{\iota_1}(s),P_{\leq K+4}W_{\iota_2}^0(s),P_{\leq K}W^0(s))\,ds\Big| \lesssim \oe^2 \KK_g^{-1},
\end{equation}
\begin{equation}\label{gar91a}
\Big| \Re \int_0^t \mathcal{A}_{\mu_1^+}^{\leq 0}(iU_{\iota_1}(s),P_{\leq K+4}W^0(s),P_{\leq K}W^0(s))\,ds\Big| \lesssim \oe^2 \KK_g^{-1},
\end{equation}
\begin{equation}\label{gar91b}
\Big|\int_0^t \mathcal{A}_{\mu_1^-}^{\leq 0}(U_{\iota_1}(s),P_{\leq K+4}W^0_-(s),P_{\leq K}W^0(s))\,ds\Big| \lesssim \oe^2 \KK_g^{-1}.
\end{equation}
\end{lemma}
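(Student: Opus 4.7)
The plan is to adapt the argument of Lemma \ref{lasty1}, augmented with two new ingredients needed to accommodate the weaker one-derivative gain of $\mu_0,\mu_1^\pm$ (as opposed to the $3/2$-gain of $\mu_2$): an iterated normal form based on the 4-way modulation lower bound \eqref{ml42} of Proposition \ref{prop2}, and a reality/time-reversibility argument to dispose of the trivial cubic resonances $\xi=\rho$ on which \eqref{ml42} breaks down.

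First I would integrate by parts in $s$ against $\Phi_{\iota_1\iota_2}$ using \eqref{ml17}, in a decomposition analogous to \eqref{gar25}--\eqref{gar25.5}. The boundary terms, the contribution of $(\partial_s+i\Lambda_{\iota_1})U_{\iota_1}$, and the terms in which $\partial_s$ on an outer $W^0$ factor is replaced by the $\oe^2$-small pieces $T_{(V-V_1)\cdot\zeta}W^0$, $T_{\Sigma-\Lambda}W^0$, $\mathcal{E}_{W^0}$ of \eqref{gar27} are all controlled via \eqref{Fact2} exactly as in \eqref{gar27.1}: the $\langle\xi\rangle^{3/2+}$-loss of $1/\Phi$ is absorbed by the one-derivative gain of $\mu_0$ (via \eqref{Factor}) or of $\mu_1^\pm$ (via \eqref{ProBulk3sym}), leaving only a $1/2$-derivative loss that is compensated by an extra factor of $\oe$. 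The only remaining term is the quartic expression in which $\partial_s W^0$ is replaced by $-iT_{V_1\cdot\zeta}W^0$.

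This quartic sum, analogous to \eqref{gar99}, oscillates with the 4-way modulation $\Psi_{\iota_1\iota_3}$ of \eqref{gar100} and carries the depletion factor $(\eta-\rho)\cdot(\eta+\rho)$ coming from the symbol of $V_1\cdot\zeta$. I would decompose dyadically in $|\Psi_{\iota_1\iota_3}|$ as in \eqref{gar101}--\eqref{gar102}. On the low-modulation region, the identity $\Psi_{\iota_1\iota_3}=\Phi_{\iota_1,+}+\Phi_{\iota_3,+}$ (or its sign analog for $\iota_2=-$ in \eqref{gar91b}) forces $|\Phi_{\iota_3,+}(\eta,\rho)|\lesssim 1$, so \eqref{Factor} applied to $(\eta-\rho)\cdot(\eta+\rho)$ yields an additional half-derivative gain sufficient to close the bound away from the trivial resonance set. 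On the high-modulation region, I would integrate by parts once more using \eqref{ml42}: its $\langle\xi\rangle^{1/2}$-loss (rather than $\langle\xi\rangle^{3/2+}$) is the decisive improvement that delivers the full lifespan $T_\e\sim\e^{-5/3+}$, in the spirit of the heuristic after \eqref{SDres2}.

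The main obstacle is the trivial resonance diagonal $\xi=\rho$ with $\iota_3=-\iota_1$, on which $\Psi_{\iota_1\iota_3}\equiv 0$ and \eqref{ml42} cannot be invoked. On this slice the quartic sum collapses to a cubic quantity of the schematic form
\begin{equation*}
\int_0^t\sum_{\xi,\eta\in\Z^2}\frac{\mu(\xi,\eta)\,\varphi_{\leq 0}(\Phi(\xi,\eta))}{i\Phi(\xi,\eta)}\cdot\frac{(\eta-\xi)\cdot(\eta+\xi)}{|\eta-\xi|^{1/2}}\cdot\frac{-\iota_1}{2i}\,|\widehat{U_{\iota_1}}(\xi-\eta,s)|^2\,\mathcal{O}(\xi,s)\,ds,
\end{equation*}
where $\mathcal{O}(\xi,s)$ denotes the product of the outer Fourier coefficients at the trivial frequency $\xi$. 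For \eqref{gar90} and \eqref{gar91a} both outer slots are $W^0$, so $\mathcal{O}(\xi,s)=|\widehat{W^0}(\xi,s)|^2$ is real; combined with the reality of $\mu_0$ (respectively $\mu_1^+$) and the two explicit factors of $1/i$, the integrand is real, so the outer $\Re\int i(\cdot)$ annihilates it --- this is the algebraic incarnation of the time-reversibility structure highlighted in Subsection \ref{ssecIdeas}. For \eqref{gar91b} the outer $\Re$ is absent, but the two outer slots $W^0_-$ and $W^0$ are mutual conjugates, and a symmetrization under $(\xi,\eta)\mapsto(-\eta,-\xi)$ combined with the bound $|\mu_1^-|\lesssim\langle\xi-\eta\rangle^6/(1+|\xi|)$ and the $\oe$ gain produced by the second integration by parts closes the estimate.
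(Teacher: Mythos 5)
Your overall roadmap is right --- partial normal form, dyadic split in the four-way modulation, a second integration by parts using Proposition~\ref{prop2}, and a reality argument to kill the diagonal $\xi=\rho$, $\iota_3=-\iota_1$. But three concrete steps in the middle go wrong, and they are exactly the places where the 1-derivative gain of $\mu_0,\mu_1^\pm$ (versus the $3/2$-gain of $\mu_2$) bites.

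\smallskip
\emph{The $T_{\Sigma_1}W^0$ term is not an easy piece.}
You absorb $T_{\Sigma-\Lambda}W^0$ into the $\oe^2$-small remainder. But \eqref{gar12.5} gives
$\|P_{k_2}T_{\Sigma_1}W^0\|_{L^2}\lesssim\oe^2 2^{k_2/2}$, not $\oe^2$. For Lemma~\ref{lasty1} the extra $2^{k/2}$ is harmless because $\mu_2$ gains $3/2$ derivatives and the frequency ceiling is $2^J=(\oe T_\e\KK_g)^{2/3}$; here the ceiling is $2^K=\oe T_\e\KK_g$ and $\mu_0,\mu_1^\pm$ gain only one derivative, so the crude bound becomes $T_\e\oe^4 2^k\lesssim T_\e^2\oe^5\KK_g\approx\KK_g^{-2}\e^{5/3}[\log]^{-4}$, which is \emph{not} $\lesssim\oe^2\KK_g^{-1}=\KK_g\e^2$ for small $\e$. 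The paper splits $\Sigma-\Lambda=\Sigma_1+(\Sigma-\Lambda-\Sigma_1)$ and treats $T_{\Sigma_1}W^0$ with its own iterated normal form, in parallel with $T_{V_1\cdot\zeta}W^0$; see \eqref{gar61b} and the ``\emph{Proof of \eqref{gar61b}}'' in Lemma~\ref{lasty2}.

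\smallskip
\emph{The coarse cut $|\Psi_{\iota_1\iota_3}|\lesssim 1$ does not close.}
Reusing \eqref{gar101}--\eqref{gar102} with the same $O(1)$ threshold and estimating the low-$\Psi$ region with \eqref{Factor} alone gives, by the same computation as above, $T_\e\oe^4 2^{k}|k|^{3/2}$ after inserting $|\mu|\lesssim 2^{-k}$, $1/|\Phi_{\iota_1+}|\lesssim 2^{3k/2}|k|^{3/2}$, and the depletion gain $2^{k/2}$; with $2^k\leq\oe T_\e\KK_g$ this is again $T_\e^2\oe^5\KK_g$, which fails. The paper instead cuts the \emph{three-way} modulation $\Phi_{\iota_1+}$ at the scale $2^B=\KK_g^{-1}2^{-k/2}(2^{k_1}+2^{k_3})^{-2}$ defined in \eqref{defcut}: for $|\Phi|>2^B$ a direct estimate works, and for $|\Phi|\leq 2^B$, \emph{after removing the trivial resonance}, Proposition~\ref{prop2} via \eqref{4phaselb} forces $|\Phi_{\iota_3+}(\eta,\rho)|\approx|\Psi_{\iota_1\iota_3}|\gtrsim 2^B$ throughout --- there is no residual ``small $\Psi$'' region to worry about. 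You have Proposition~\ref{prop2} intervening in the wrong place: it is needed precisely to rule out small $\Psi$, not in the high-modulation region where you already have a usable lower bound.

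\smallskip
\emph{One more integration by parts is needed.}
After the second normal form, the terms $S^\ell_3,S^\ell_4$ of \eqref{tyu14.5}, in which $\partial_s$ falls on a high-frequency factor $W^0$, still carry a full $\oe 2^k$ loss and must themselves be expanded via \eqref{gar12}, decomposed along the five-wave modulation $\Psi'_{\iota_1\iota_3\iota_4}$ of \eqref{tyu31}, and handled by a third integration by parts on the high-modulation piece (see the multipliers $\nu_{\ell,\ast}$ and \eqref{tyu35}). Your proposal stops one layer too early.

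\smallskip
Two smaller remarks: your symmetrization for \eqref{gar91b} is overkill --- for $\iota_2=-$ the phase $\Phi_{\iota_1-}(\xi,\eta)=\Lambda(\xi)+\Lambda(\eta)-\iota_1\Lambda(\xi-\eta)$ can only be $\lesssim 1$ when $|\xi-\eta|\gtrsim|\xi|$, so the $U$-factor is at high frequency and the estimate is trivially small, as the paper notes at the top of the proof of Lemma~\ref{gar50}. And the reality argument for the trivial resonance you describe for \eqref{gar90}, \eqref{gar91a} is correct and matches the paper's.
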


\begin{proof} Recall that $W^0=T_{\Sigma}^{2N_0/3}U\in\mathcal{W}_+$ and $2^K=\oe T_\e \KK_g$, see \eqref{gar40.4}. The case $\iota_2=-$ can be analyzed as in the proof of Lemma \ref{lasty1}, by noticing that the modulation can only be small if the frequency of the undifferentiated variable $U_{\iota_1}$ is large, so there is no loss of derivatives after applying normal forms and the corresponding integral are bounded by $T_\varep\oe^4$. So we will assume that $\iota_2=+$ and insert discrete Littlewood-Paley projections. After removing also the contribution of large frequencies $k_1$, it remains to prove that
\begin{equation}
\label{gar52.0}
\Big| \Re \int_0^t \mathcal{A}_{\mu}^{\leq 0}(P_{k_1} iU_{\iota_1} (s),P_{k_2}W^0(s),P_{k}W^0(s))\,ds\Big| \lesssim \oe^2\KK_g^{-1} [\log(1/\oe)]^{-1}2^{-k_1},
\end{equation}
for $\mu\in\{\mu_0,\mu_1^+\}$, and any $k,k_1,k_2\in[-4,\infty)\cap\Z$ satisfying
\begin{equation}
\label{gar51.5}
|k-k_2| \leq 4, \qquad k_1 \leq k-30, \qquad 2^k \leq 2^K = \oe T_\e \KK_g\lesssim \oe^{-2/3}\KK_g^{-4/3}[\log(1/\oe)]^{-2}.
\end{equation}

Notice that to prove \eqref{gar52.0} we only need to estimate the real part of the integral, which is important in some of the cases. As in \eqref{SMnot1} we define
\begin{align}
\label{defT}
\mathcal{T}^{\leq 0}_{\mu}(F,G,H) := \sum_{\xi,\eta\in\Z^2} \mu(\xi,\eta)\widehat{F}(\xi-\eta) \widehat{G}(\eta)\widehat{\overline{H}}(-\xi)\cdot 
  \frac{\varphi_{\leq 0}(\Phi(\xi,\eta))}{i\Phi(\xi,\eta)},
\end{align}
where the phase function $\Phi=\Phi_{\iota_1+}$ does not vanish due to \eqref{gar24}. Integrating by parts as before (see \eqref{gar25}--\eqref{gar25.5}) we can write,
for $\mu \in \{\mu_0, \mu_1^+\}$,
\begin{align}
\label{gar52.1}
& \int_0^t \mathcal{A}^{\leq 0}_{\mu}(P_{k_1}iU_{\iota_1} (s), P_{k_2}W^0(s),P_{k}W^0(s)) = I'+ II'_1 + II'_2 + III'
\end{align} 
where
\begin{align}
\label{gar52.2}
\begin{split}
I' & := - i\int_0^t \mathcal{T}^{\leq 0}_{\mu} ((\partial_s+i\Lambda_{\iota_1}) P_{k_1} U_{\iota_1}(s), P_{k_2}W^0(s),P_{k}W^0(s)) )\,ds,
\\
II'_1 & := - i\int_0^t \mathcal{T}^{\leq 0}_{\mu} (P_{k_1} U_{\iota_1}(s), P_{k_2}(\partial_s+i\Lambda)W^0(s), P_k W^0(s)) \,ds,
\\
II'_2 & :=  - i\int_0^t\mathcal{T}^{\leq 0}_{\mu} (P_{k_1} U_{\iota_1}(s), P_{k_2}W^0(s), P_k (\partial_s+i\Lambda)W^0(s)) \,ds,
\\
III' & := i\mathcal{T}^{\leq 0}_{\mu} (P_{k_1} U_{\iota_1}(t), P_{k_2} W^0(t), P_k W^0(t)) - i\mathcal{T}^{\leq 0}_{\mu} (P_{k_1} U_{\iota_1}(0),P_{k_2}W^0(0), P_kW^0(0)).
\end{split}
\end{align} 
We estimate the contributions of these integrals separately, in several steps.

{\bf{Step 1.}} We start with the easier terms $III'$ and $I'$. Notice that 
\begin{equation}\label{boundMu}
|\mu_0(\xi,\eta)|+|\mu_1^+(\xi,\eta)|\lesssim 2^{-k}2^{6k_1}
\end{equation}
in the support of the sum, as a consequence of \eqref{ProBulk2sym}, \eqref{Factor}, and \eqref{ProBulk3sym}. Using the basic trilinear estimate \eqref{Fact2}, the small divisors bound \eqref{gar24}, and the bounds \eqref{gar51.5} we have
\begin{equation*}
\begin{split}
|III'|&\lesssim \sup_{s\in[0,T]}2^{3k/2}|k|^{3/2}\cdot 2^{-k} 2^{12k_1} 
  \|P_{k_1}U_{\iota_1}(s)\|_{H^2} \|P_{k_2}W^0(s)\|_{L^2} \|P_{k}W^0(s)\|_{L^2}\\
  &\lesssim \oe^3 2^{-4k_1} (\oe T_\e \KK_g)^{1/2}[\log(1/\oe)]^{3/2}.
\end{split}
\end{equation*}
This is better than the desired estimates \eqref{gar52.0}. Moreover, using also \eqref{gar10}--\eqref{gar11},
\begin{equation}
\label{tightT}
|I'|\lesssim T_\varep 2^{3k/2}|k|^{3/2}\cdot 2^{-k} 2^{-4k_1} \oe^4\lesssim 2^{-4k_1}T_\varep\oe^4 2^{k/2}[\log(1/\oe)]^{3/2}.
\end{equation}
Using the definition \eqref{al0}, we have 
\begin{equation}
\label{tyu3}
T_\varep\oe^4 2^{k/2}\lesssim T_\varep\oe^4  (\oe T_\e \KK_g)^{1/2}\lesssim \oe^2\KK_g^{-2.5}[\log(1/\oe)]^{-3},
\end{equation}
thus the estimates \eqref{tightT} precisely match the desired estimates \eqref{gar52.0}, up to logarithmic factors.

{\bf{Step 2.}} It remains to consider the harder terms $II'_1$ and $II'_2$. 
The two terms are similar, after changes of variables. To bound them, we examine the formula
\begin{equation*}
(\partial_t + i\Lambda)W^0 = - iT_{V\cdot\zeta}W^0 -iT_{\Sigma_1}W^0 - iT_{\Sigma -\Sigma_1 - \Lambda}W^0 + \mathcal{E}_{W^0},
\end{equation*}
see \eqref{gar12}. Notice that, as a consequence of \eqref{gar12.5},
\begin{equation*}
\|P_{k_2}T_{\Sigma-\Lambda-\Sigma_1}W^0(s)\|_{L^2} + \|P_{k_2}\mathcal{E}_{W^0}(s)\|_{L^2} \lesssim \oe^3 2^{3k/2} + \oe^2\lesssim \oe^2
\end{equation*}
Therefore, using \eqref{Fact2}, \eqref{boundMu}, \eqref{gar24}, and \eqref{gar10}--\eqref{gar11} as before, we estimate
\begin{equation}\label{gar60}
\begin{split}
\Big|\int_0^t \mathcal{T}^{\leq 0}_{\mu}(P_{k_1} U_{\iota_1}(s),P_{k_2}(-iT_{\Sigma - \Lambda - \Sigma_1}W^0 + \mathcal{E}_{W^0})(s),P_{k}W^0(s))\,ds\Big|
\\
\lesssim T_\e \cdot 2^{3k/2}|k|^{3/2} 2^{12k_1}2^{-k} \cdot \oe^4 2^{-20k_1}\\
\lesssim 2^{-4k_1} \cdot T_\e  \oe^4 (T_\e\oe\KK_g)^{1/2}[\log(1/\oe)]^{3/2}.
\end{split}
\end{equation}
This suffices, due to \eqref{tyu3}. It remains to bound the contributions of the terms $T_{V\cdot\ze}W^0$ and $T_{\Sigma_1}W^0$. These estimates are harder, and we prove them separately in Lemma \ref{lasty2} below.\end{proof}

\begin{lemma}\label{lasty2}
For any $t\in[0,T]$, $\iota_1\in\{+,-\}$, $\mu \in \{\mu_0,\mu_1^+\}$, 
and $k,k_1,k_2\in[-4,\infty)\cap\Z$ satisfying \eqref{gar51.5}, we have
\begin{equation}
\label{gar61a}
\Big|\Re \int_0^t \mathcal{T}^{\leq 0}_{\mu} (P_{k_1} U_{\iota_1}(s), P_{k_2}T_{V\cdot\ze}W^0(s),P_{k}W^0(s))\,ds\Big|\lesssim (\oe^2\KK_g^{-1})[\log(1/\oe)]^{-1}2^{-k_1},
\end{equation}
and
\begin{equation}
\label{gar61b}
\Big|\Re \int_0^t \mathcal{T}^{\leq 0}_{\mu} (P_{k_1} U_{\iota_1}(s), P_{k_2}T_{\Sigma_1}W^0(s),P_{k}W^0(s))\,ds\Big|
  \lesssim (\oe^2\KK_g^{-1})[\log(1/\oe)]^{-1}2^{-k_1},
\end{equation} 
\end{lemma}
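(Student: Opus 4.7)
The plan is to reduce each integral to a quartic expression in $U,\bar U$ and perform a \emph{second} integration by parts in time, using the four-way modulation bounds of Proposition \ref{prop2}, after isolating the trivial cubic-resonance locus which Proposition \ref{prop2} does not cover. As a first step I would expand the paradifferential operators using Lemma \ref{symbbou}: writing $V = V_1 + \oe^2 H^{N_0-1/2}$ with $V_1 = |\nabla|^{-1/2}\nabla\,\Im U$, and $\Sigma_1$ as an explicit quadratic symbol in second derivatives of $h = \Re(g-\Delta)^{-1/2}U + \oe^2 H^{N_0}$. The $\oe^2$-remainders contribute quintic integrands that can be bounded directly via \eqref{Fact3} by $T_\varep\oe^5 2^{2k}\langle k\rangle^{3/2}2^{-4k_1}$, comfortably within the target \eqref{gar61a}--\eqref{gar61b} in view of \eqref{tyu3}. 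What remains are quartic integrals of the form
\begin{equation*}
\int_0^t \sum_{\xi,\eta,\rho\in\Z^2} \beta_{\iota_1\iota_3}(\xi,\eta,\rho)\,
  \widehat{U_{\iota_1}}(\xi-\eta,s)\widehat{U_{\iota_3}}(\eta-\rho,s)\widehat{W^0}(\rho,s)\overline{\widehat{W^0}}(\xi,s)\,ds,
\end{equation*}
where $\beta_{\iota_1\iota_3}$ carries $\mu(\xi,\eta)\varphi_{\leq 0}(\Phi_{\iota_1+})/\Phi_{\iota_1+}$ (with $\mu\in\{\mu_0,\mu_1^+\}$), the inner paradifferential cutoff $\chi(|\eta-\rho|/|\eta+\rho|)$, and a factor of size at most $\langle\eta-\rho\rangle\langle\eta\rangle^{1/2}$ coming from $V_1\cdot\zeta$ or $\Sigma_1$.

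Next I would split with respect to the four-way modulation $\Psi_{\iota_1\iota_3}$ of \eqref{gar100}. In the region $|\Psi_{\iota_1\iota_3}|\gtrsim 1$, integration by parts in $s$ using \eqref{gar27} and Proposition \ref{proeqU}, combined with \eqref{Fact3} and the $2^{-k}$ gain \eqref{boundMu}, closes the bound within the budget in the same manner as Step 3 of Lemma \ref{lasty1}. In the region $|\Psi_{\iota_1\iota_3}|\lesssim 1$, I would first carve out the \emph{trivial-resonance} locus $\{\xi=\rho\}$ excluded from Proposition \ref{prop2}; on its complement, the sharp lower bound of Proposition \ref{prop2} gives $|\Psi_{\iota_1\iota_3}|^{-1}\lesssim \langle\xi\rangle^{1/2}(\langle\xi-\eta\rangle+\langle\eta-\rho\rangle)^{2}$, and a second integration by parts in $s$ closes the estimate with the same accounting as before, since the $\langle\xi\rangle^{1/2}$ loss from the new denominator is offset by the $\langle\xi\rangle^{-1}$ gain in \eqref{boundMu} together with the $\oe$ saving from the time derivative.

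The main obstacle is precisely the trivial-resonance contribution in the low-modulation regime, where Proposition \ref{prop2} provides no gain. On the set $\xi=\rho$ the quartic integrand reduces (after a harmless commutator in the $\chi$ cutoff) to $\mu(\xi,\eta)\,\widehat{U_{\iota_1}}(\xi-\eta)\widehat{U_{\iota_3}}(\eta-\xi)\,|\widehat{W^0}(\xi)|^2$ multiplied by the real factor $\varphi_{\leq 0}(\Phi_{\iota_1+})/\Phi_{\iota_1+}$. Because both symbols $\mu_0$ and $\mu_1^+$ are \emph{real} (see the last parts of \eqref{ProBulk2sym} and \eqref{ProBulk3sym}) and the product $\widehat{U_{\iota_1}}(\xi-\eta)\widehat{U_{\iota_3}}(\eta-\xi)$ is symmetric under $\eta\mapsto 2\xi-\eta$ up to complex conjugation, taking $\Re$ in \eqref{gar61a}--\eqref{gar61b} produces a cancellation; the surviving piece is then dominated by exploiting the $\cos^2$-depletion factor $\mathfrak{d}$ from \eqref{ProBulk2sym} via Remark \ref{RemFactor}, which gains one derivative in $\langle\xi\rangle$ and compensates the $\langle\eta\rangle^{1/2}$ loss from $T_{V\cdot\zeta}$ or $T_{\Sigma_1}$. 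This is the concrete realization of the time-reversibility mechanism described after \eqref{SDres2}; verifying that the reality of $\mu_0,\mu_1^+$ survives all previous paradifferential symmetrizations, and that the commutator errors generated by the $\chi$ cutoffs remain admissible within the budget $\oe^2\KK_g^{-1}[\log(1/\oe)]^{-1}2^{-k_1}$, is the most delicate bookkeeping in the entire proof.
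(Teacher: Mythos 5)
Your high-level strategy is aligned with the paper's — reduce to a quartic expression, invoke Proposition~\ref{prop2}, and use the reality of $\mu_0,\mu_1^+$ to kill the trivial-resonance locus — but the actual decomposition you propose does not put you in a position to use Proposition~\ref{prop2}, and one more integration by parts is missing. There are two concrete gaps.

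First, you split the quartic sum by whether $|\Psi_{\iota_1\iota_3}|\gtrsim 1$ or $\lesssim 1$, and in the latter region you claim Proposition~\ref{prop2} gives $|\Psi_{\iota_1\iota_3}|^{-1}\lesssim\langle\xi\rangle^{1/2}(\ldots)^2$. It does not: Proposition~\ref{prop2} bounds from below the \emph{sum of absolute values} $|\Phi_{\iota_1+}(\xi,\eta)|+|\Phi_{\iota_3+}(\eta,\rho)|$, whereas $\Psi_{\iota_1\iota_3}=\Phi_{\iota_1+}(\xi,\eta)+\Phi_{\iota_3+}(\eta,\rho)$ can cancel to an arbitrarily small value even when that sum is $\gtrsim 1$ (e.g.\ $\Phi_{\iota_1+}\approx 0.1$, $\Phi_{\iota_3+}\approx -0.1$, both admitted by the cutoff $\varphi_{\leq 0}$). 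To convert the proposition into a lower bound on $\Psi$ itself one must first force the three-wave modulation to be \emph{below} the Prop.~\ref{prop2} scale: this is exactly why the paper inserts the additional threshold $2^B:=\KK_g^{-1}2^{-k/2}(2^{k_1}+2^{k_3})^{-2}$ (after dyadically decomposing $V_1$ in $k_3$ and discarding the pairs $(k_1,k_3)$ comparable to $k$), splits $\mathcal{T}^{\leq 0}_\mu = \mathcal{T}^{\leq B}_\mu + \mathcal{T}^{(B,0]}_\mu$, handles $\mathcal{T}^{(B,0]}_\mu$ by a direct $2^{-B}$ gain, and only on $\mathcal{T}^{\leq B}_\mu$ invokes Prop.~\ref{prop2} to deduce $|\Phi_{\iota_3+}(\eta,\rho)|\approx|\Psi_{\iota_1\iota_3}|\gtrsim 2^B$. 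Your plan skips the $B$-cutoff entirely, so the key small-divisor input is unavailable on your ``$|\Psi|\lesssim 1$'' region.

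Second, a single extra integration by parts (by $\Psi$) is not enough in this lemma. After the $\Psi$-integration by parts, the terms $S^\ell_3,S^\ell_4$ carrying $(\partial_s+i\Lambda)W^0$ again lose a derivative through $T_{V_1\cdot\zeta}W^0$, and because the frequency range here is $2^k\leq\oe T_\varep\KK_g$ (versus $2^k\lesssim(\oe T_\varep\KK_g)^{2/3}$ in Lemma~\ref{lasty1}), the bookkeeping that closed Lemma~\ref{lasty1} no longer closes. The paper therefore has a Step~4: expand $(\partial_s+i\Lambda)W^0$ again, introduce quintic phase functions $\Psi'_{\iota_1\iota_3\iota_4}$, split by their size, and integrate by parts a third time in the large-quintic-modulation region. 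Your proposal has no analogue of this step. (Minor additional points: the trivial-resonance cancellation is exact — after taking the real part, the contribution of $\{\xi=\rho,\ \iota_3=-\iota_1\}$ is identically zero because $i\gamma(\xi,\eta,\xi)$ is purely imaginary — so there is no ``surviving piece'' there requiring the depletion factor; and the depletion factor $\mathfrak{d}$ is relevant only for $\mu_0$, not for $\mu_1^+$, which is already $O(\langle\xi\rangle^{-1})$ by \eqref{ProBulk3sym}.)
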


\begin{proof} 
The proofs of \eqref{gar61a} and \eqref{gar61b} are more involved.
The use of the same argument as in Lemma \ref{lasty1} would lead to an additional loss of a factor of $2^{k/2}$,  which ultimately leads to a smaller value of $T_\e$. Our main additional ingredient is the lower bounds in Proposition \ref{prop2}.

\medskip
{\bf{Proof of \eqref{gar61a}.}} 
{\bf{Step 1.}} We start with some reductions. Notice first that we may replace $V$ by $V_1$, see \eqref{ExpV}, at the expense of acceptable errors. Also, we decompose $V_1=\sum_{k_3\geq -4}P_{k_3}V_1$ and estimate, using \eqref{Fact3} and \eqref{boundMu}, 
\begin{equation*}
\begin{split}
\Big|\int_0^t \mathcal{T}^{\leq 0}_{\mu} (P_{k_1} U_{\iota_1}(s), &P_{k_2}T_{P_{k_3}V_1\cdot\ze}W^0(s),P_{k}W^0(s))\,ds\Big|\lesssim T_\varep\oe^42^{3k/2}|k|^{3/2}2^{-20k_1}2^{-20k_3}\\
&\lesssim T_\varep\oe^4  (\oe T_\e \KK_g)^{1/2}[\log(1/\oe)]^{3/2}\cdot (2^k2^{-20k_1}2^{-20k_3}).
\end{split}
\end{equation*}
This suffices to bound the contribution of the pairs $(k_1,k_3)$ for which $2^{18k_1}2^{18k_3}\geq\KK_g^{-1} 2^k$, using also \eqref{tyu3}.

To bound the contributions of the pairs $(k_1,k_3)$ for which $2^{18k_1}2^{18k_3}\leq\KK_g^{-1} 2^k$ we subdivide the operators $\mathcal{T}_\mu^{\leq 0}$ in \eqref{defT} into modulations which are much smaller than $2^{-k/2}$ and the remaining ones. More precisely, define $B=B(k,k_1,k_3)$ such that
\begin{align}
\label{defcut}
2^B := \KK_g^{-1}2^{-k/2} (2^{k_1}+2^{k_3})^{-2},
\end{align}
and decompose
\begin{align}
\label{defAmu}
\begin{split}
& \mathcal{T}_{\mu}^{\leq 0}(F,G,H) = \mathcal{T}^{\leq B}_{\mu}(F,G,H) + \mathcal{T}^{(B,0]}_{\mu}(F,G,H)
,\\
& \mathcal{T}^{\ast}_{\mu}(F,G,H) := \sum_{\xi,\eta\in\Z^2} \mu(\xi,\eta)\widehat{F}(\xi-\eta) \widehat{G}(\eta)\widehat{\overline{H}}(-\xi)\cdot 
  \frac{\varphi_\ast(\Phi(\xi,\eta))}{i\Phi(\xi,\eta)},
\end{split}
\end{align}
where $\ast\in\{\leq B,(0,B]\}$, $\mu\in\{\mu_0,\mu_1^+\}$, and $\Phi(\xi,\eta)=\Phi_{\iota_1+}(\xi,\eta)=\Lambda(\xi)-\Lambda(\eta)-\iota_1\Lambda(\xi-\eta)$. Notice that
\begin{align}
\label{gar52'}
\Big|\int_0^t \mathcal{T}^{(B,0]}_{\mu} (P_{k_1} U_{\iota_1}(s), 
  P_{k_2}T_{P_{k_3}V_1\cdot\ze}W^0(s), P_k W^0(s)) \,ds\Big| 
  \lesssim \oe^2\KK_g^{-1} [\log(1/\oe)]^{-1}2^{-k_1}2^{-k_3},
\end{align}
for $\iota_1\in\{+,-\}$ and $k,k_1,k_2$ verifying \eqref{gar51.5}. 
Indeed, using the lower bounds $|\Phi(\xi,\eta)| \gtrsim 2^B$, the basic multilinear estimate \eqref{Fact2}, the bounds on the symbols \eqref{boundMu}, and the estimates \eqref{gar10}--\eqref{gar12.5}, we can bound the left-hand side of \eqref{gar52'} by
\begin{align*}
CT_\e \cdot 2^{k/2}\KK_g 2^{-k}2^{10k_1}2^{4k_3}\|P_{k_1}U_{\iota_1}(s)\|_{H^2} \|P_{k_2}T_{P_{k_3}V_1\cdot\ze}W^0(s)\|_{L^2}\|P_{k}W^0(s)\|_{L^2}\\
\lesssim 2^{-4k_1}2^{-4k_3}T_\varep\oe^4  \KK_g(\oe T_\e \KK_g)^{1/2}.
\end{align*}
Notice that this suffice due to \eqref{tyu3}. After these reductions, for \eqref{gar61a} it suffices to prove that 
\begin{align}
\label{gar58}
\Big| \Re \int_0^t \mathcal{T}^{\leq B}_{\mu} (P_{k_1} U_{\iota_1}(s), P_{k_2}T_{P_{k_3}V_1\cdot\ze}W^0(s), P_k W^0(s)) \,ds\Big| \lesssim \oe^2\KK_g^{-1} [\log(1/\oe)]^{-1}2^{-k_1}2^{-k_3},
\end{align}
provided that $\iota_1\in\{+,-\}$ and $k,k_1,k_2,k_3\in[-4,\infty)$ satisfy
\begin{equation}
\label{tyu8}
|k-k_2| \leq 4,\qquad 2^k \leq 2^K = \oe T_\e \KK_g\lesssim \oe^{-2/3}\KK_g^{-4/3}[\log(1/\oe)]^{-2},\qquad 2^{18k_1}2^{18k_3}\leq\KK_g^{-1} 2^k.
\end{equation}

{\bf{Step 2.}} To prove \eqref{gar58} we expand the expression in the left-hand side as
\begin{align}
\label{gartr1}
\begin{split}
&  \frac{-1}{4\pi^2}\Re\int_0^t \sum_{\xi,\eta,\rho\in\Z^2} \mu(\xi,\eta)\frac{\varphi_{\leq B}(\Phi_{\iota_1+}(\xi,\eta))}{i\Phi_{\iota_1+}(\xi,\eta)}
  \varphi_{k_1}(\xi-\eta)\widehat{U_{\iota_1}}(\xi-\eta,s) \varphi_k(\xi)\overline{\widehat{W^0}}(\xi,s)
\\
& \qquad \times\varphi_{k_2}(\eta)\chi\Big(\frac{|\eta-\rho|}{|\eta+\rho|}\Big)\frac{(\eta-\rho)\cdot(\eta+\rho)}{4|\eta-\rho|^{1/2}}
    \varphi_{k_3}(\eta-\rho)(\widehat{U}-\widehat{\overline{U}})(\eta-\rho,s)\widehat{W^0}(\rho,s)\,ds.
\end{split}
\end{align}
Therefore, we define the multipliers $\gamma=\gamma_{\mu;k,k_1,k_2,k_3;\iota_1}$, 
\begin{equation}\label{tyu1}
\begin{split}
\gamma(\xi,\eta,\rho):=&\varphi_{k_1}(\xi-\eta)\varphi_{k_3}(\eta-\rho)\varphi_k(\xi)\varphi_{k_2}(\eta)\\
 &\times\mu(\xi,\eta)\frac{\varphi_{\leq B}(\Phi_{\iota_1+}(\xi,\eta))}{\Phi_{\iota_1+}(\xi,\eta)}\chi\Big(\frac{|\eta-\rho|}{|\eta+\rho|}\Big)\frac{(\eta-\rho)\cdot(\eta+\rho)}{|\eta-\rho|^{1/2}},
 \end{split}
\end{equation}
and it suffices to prove that, for any $\iota_3\in\{+,-\}$ and any $k,k_1,k_2,k_3$ satisfying \eqref{tyu8},
\begin{equation}\label{tyu2}
\begin{split}
\Big|\Re\int_0^t \sum_{\xi,\eta,\rho\in\Z^2} i\gamma(\xi,\eta,\rho) \widehat{U_{\iota_1}}(\xi-\eta,s)
	\widehat{U_{\iota_3}}(\eta-\rho,s)\widehat{W^0}(\rho,s)\overline{\widehat{W^0}}(\xi,s)\,ds\Big|
\\
	\lesssim  (\oe^2\KK_g^{-1})[\log(1/\oe)]^{-1}2^{-k_1}2^{-k_3}.
\end{split}
\end{equation}

To prove this bound we would like to integrate by parts in time again. For this we want to use Proposition \ref{prop2} to prove lower bounds on the associated four-wave modulation functions. 
To apply this proposition we first need to remove the contribution of the {\it ``trivial'' resonances}, which correspond to 
$\iota_1=-\iota_3$ and $\xi=\rho$ (the condition on the frequencies is already verified since $2^{18k_1}2^{18k_3}\leq\KK_g^{-1} 2^k$, see \eqref{tyu8}). Indeed, the sum in \eqref{tyu2} when $\rho=\xi$ and $\iota_3=-\iota_1$ is
\begin{align*}
\sum_{\xi,\eta \in\Z^2} i\gamma(\xi,\eta,\xi)|\widehat{U_{\iota_1}}(\xi-\eta,s)|^2|\widehat{W^0}(\xi,s)|^2.
\end{align*}
This is a purely imaginary expression, since the symbols $\mu_0$ and $\mu_1^+$ are real-valued. 
Thus, the contribution to the term in \eqref{tyu2} vanishes when the summation is taken over $\xi=\rho$.
  
We are now ready to use Proposition \ref{prop2}. Recall the four-wave modulation functions $\Psi_{\iota_1\iota_3}(\xi,\eta,\rho) = \Lambda(\xi)-\Lambda(\rho)-\iota_1\Lambda(\xi-\eta)-\iota_3\Lambda(\eta-\rho)$ defined in \eqref{gar100}. 
Recall that $|\Lambda(\xi)-\iota_1\Lambda(\xi-\eta)-\Lambda(\eta)| \leq 2^{B+2} \leq 4\KK_g^{-1}2^{-k/2}(2^{k_1}+2^{k_3})^{-2}$, see \eqref{defcut}. 
Applying the inequality \eqref{ml42} and recalling that $g\notin\mathcal{R}$ and $\KK_g^{-1}$ is small, we see that 
\begin{equation*}
|\Lambda(\rho)-\Lambda(\eta)+\iota_3\Lambda(\eta-\rho)|\gtrsim 2^{-k/2}(2^{k_1}+2^{k_3})^{-2}.
\end{equation*}
In particular
\begin{equation}
\label{4phaselb}
|\Phi_{\iota_3+}(\eta,\rho)|\approx | \Psi_{\iota_1\iota_3}(\xi,\eta,\rho) | \gtrsim\KK_g^{-1} 2^{-k/2} (2^{k_1}+2^{k_3})^{-2},
\end{equation} 
for all $(\xi,\eta,\rho)$ in the support of the sum in \eqref{tyu2}. 

Therefore, if we define, for $\ell\in\mathbb{Z}$
\begin{equation}\label{tyu10}
\gamma_\ell(\xi,\eta,\rho):=\gamma(\xi,\eta,\rho)\cdot\varphi_{\ell}(\Psi_{\iota_1\iota_3}(\xi,\eta,\rho)),
\end{equation}
then it suffices to prove that for any $\iota_1,\iota_3\in\{+,-\}$ and any $k,k_1,k_2,k_3$ satisfying \eqref{tyu8},
we have \footnote{Notice
 that we do not need to take the real part of the integral anymore. The main point in taking the real part was to eliminate the contribution of the trivial resonances.}
\begin{equation}\label{tyu12}
\begin{split}
\sum_{\ell\geq B+4}\Big|\int_0^t \sum_{\xi,\eta,\rho\in\Z^2} \gamma_\ell(\xi,\eta,\rho)\widehat{U_{\iota_1}}(\xi-\eta,s)\widehat{U_{\iota_3}}(\eta-\rho,s)\widehat{W^0}(\rho,s)\overline{\widehat{W^0}}(\xi,s)\,ds\Big|\\
\lesssim  (\oe^2\KK_g^{-1})[\log(1/\oe)]^{-1}2^{-k_1}2^{-k_3}.
\end{split}
\end{equation}

{\bf{Step 3.}} We can now integrate by parts in time again. We define 
\begin{equation*}
\mathcal{S}^{\ell}(F,G,H,H') := \sum_{\xi,\eta,\rho\in\Z^2} \frac{\gamma_{\ell}(\xi,\eta,\rho)}{i\Psi_{\iota_1\iota_3}(\xi,\eta,\rho)}
  \widehat{F}(\xi-\eta)\widehat{G}(\eta-\rho)\widehat{H}(\rho)\overline{\widehat{H'}}(\xi).
\end{equation*}
As in  \eqref{gar105} we can write
\begin{equation}
\label{tyu14}
\int_0^t \sum_{\xi,\eta,\rho\in\Z^2} \gamma_\ell(\xi,\eta,\rho)\widehat{U_{\iota_1}}(\xi-\eta,s)\widehat{U_{\iota_3}}(\eta-\rho,s)\widehat{W^0}(\rho,s)\overline{\widehat{W^0}}(\xi,s)\,ds=S_1^\ell+S_2^\ell+S_3^\ell+S_4^\ell+S_5^\ell,
\end{equation}
where
\begin{equation}\label{tyu14.5}
\begin{split}
& S^\ell_1:=-\int_0^t\mathcal{S}^{\ell}((\partial_s+i\Lambda_{\iota_1})U_{\iota_1}(s),U_{\iota_3}(s),W^0(s),W^0(s))\,ds,
\\
& S^\ell_2:=-\int_0^t\mathcal{S}^{\ell}(U_{\iota_1}(s),(\partial_s+i\Lambda_{\iota_3})U_{\iota_3}(s),W^0(s),W^0(s))\,ds,
\\
& S^\ell_3:=-\int_0^t\mathcal{S}^{\ell}(U_{\iota_1}(s),U_{\iota_3}(s),(\partial_s+i\Lambda)W^0(s),W^0(s))\,ds,
\\
& S^\ell_4:=-\int_0^t\mathcal{S}^{\ell}(U_{\iota_1}(s),U_{\iota_3}(s),W^0(s),(\partial_s+i\Lambda)W^0(s))\,ds,
\\
& S^\ell_5:=\mathcal{S}^{\ell}(U_{\iota_1}(t),U_{\iota_3}(t),W^0(t),W^0(t))-\mathcal{S}^{\ell}(U_{\iota_1}(0),U_{\iota_3}(0),W^0(0),W^0(0)).
\end{split}
\end{equation}

Notice that, as a consequence of \eqref{boundMu}, \eqref{gar24}, and \eqref{Factor},
\begin{equation}
\label{tyu20}
\begin{split}
\Big|\frac{\gamma_{\ell}(\xi,\eta,\rho)}{\Psi_{\iota_1\iota_3}(\xi,\eta,\rho)}\Big|&\lesssim \varphi_{k_1}(\xi-\eta)\varphi_{k_3}(\eta-\rho)\varphi_k(\xi)\varphi_{k_2}(\eta)\varphi_\ell(\Psi_{\iota_1\iota_3}(\xi,\eta,\rho))\\
&\times 2^{-\ell}2^k|k|^{3/2}2^{12k_1}\cdot (2^\ell+2^{k_3})2^{4k_3}.
\end{split}
\end{equation}
Therefore, using \eqref{gar10}--\eqref{gar11} and the general bounds \eqref{Fact3},
\begin{equation*}
|S^\ell_1|+|S^\ell_2|+|S^\ell_5|\lesssim T_\varep\oe^5\cdot 2^k|k|^{3/2}(1+2^{-\ell})2^{-6k_3}2^{-6k_1}.
\end{equation*}
This suffices, in view of \eqref{tyu3} and the estimate $2^{-B}\lesssim \KK_g2^{k/2}(2^{2k_1}+2^{2k_3})$. 
Moreover, the contributions of the terms $S^\ell_3$ and $S^\ell_4$ are similar 
(in fact, equivalent, by changes of variables). 
Using \eqref{tyu20} we see that
\begin{equation}
 \label{tyu23rev1}
\begin{split}
\sum_{\ell\geq 0} \Big|\frac{\gamma_{\ell}(\xi,\eta,\rho)}{\Psi_{\iota_1\iota_3}(\xi,\eta,\rho)}\Big|
  & \lesssim \varphi_{k_1}(\xi-\eta)\varphi_{k_3}(\eta-\rho)\varphi_k(\xi)\varphi_{k_2}(\eta)
    \cdot 2^k|k|^{3/2}2^{12k_1}2^{6k_3}.
\end{split}
\end{equation}
Therefore, just \eqref{Fact3} and the $L^2$ estimates \eqref{gar10}--\eqref{gar12.5}
\begin{equation}
\label{tyu23rev2}
\sum_{\ell\geq 0}(|S^\ell_3|+|S^\ell_4|)\lesssim 2^k|k|^{3/2}2^{-4k_1}2^{-4k_3}\cdot T_\e\oe^52^{k}.
\end{equation}
Again, this suffices due to \eqref{tyu3} and the bounds $2^{3k/2}\oe\lesssim 1$. After these reductions, for \eqref{tyu12} it suffices to prove that
\begin{equation}\label{tyu23}
\sum_{\ell\in[B,0]}\Big|\int_0^t\mathcal{S}^{\ell}(U_{\iota_1}(s),U_{\iota_3}(s),
  (\partial_s+i\Lambda)W^0(s),W^0(s))\,ds\Big|\lesssim  (\oe^2\KK_g^{-1})[\log(1/\oe)]^{-1}2^{-k_1}2^{-k_3}.
\end{equation}

{\bf{Step 4.}} 
To prove \eqref{tyu23} we start by expanding $(\partial_s+i\Lambda)W^0(s)$ as in \eqref{gar12}. 
We now claim that the contribution of all the terms except for $-iT_{V_1\cdot\ze}W^0$ can be bounded 
as desired, using \eqref{tyu20}
and $\|(\partial_s+i\Lambda)W^0(s)+iT_{V_1\cdot\ze}W^0(s)\|_{L^2} \lesssim \oe^2 2^{k/2}$. 
To see this, observe that the bound on the symbol in \eqref{tyu20}
gives weaker estimates than \eqref{tyu23rev1} and \eqref{tyu23rev2}
when the sums are taken over $\ell \in (B,0]$;
in particular there is an additional loss of a factor of $2^{-B} \approx 2^{k/2}$ (see \eqref{defcut}). 
Estimating using \eqref{tyu20} we get
\begin{equation*}
\begin{split}
\sum_{\ell\in[B,0]} &\Big|\int_0^t\mathcal{S}^{\ell}(U_{\iota_1}(s),U_{\iota_3}(s),
  (\partial_s+i\Lambda)W^0(s) + iT_{V_1\cdot\ze}W^0(s), W^0(s))\,ds\Big|\\
&\lesssim 2^{-B} \cdot 2^k|k|^{3/2}2^{-4k_1}2^{-4k_3}\cdot T_\e \oe^5 \cdot 2^{k/2}\lesssim 2^{2k} |k|^{3/2}2^{-2k_1}2^{-2k_3}\cdot T_\e \oe^5
\end{split}
\end{equation*}
which, recall \eqref{gar51.5},  suffices for \eqref{tyu23}.

Notice that the bound above is actually tight in terms of powers of $2^k$, and therefore 
the same argument above would not suffice 
when considering the contribution of $T_{V_1\cdot\ze}W^0$ if we just use the basic
bound $\|P_k T_{V_1\cdot\ze}W^0\|_{L^2} \lesssim \oe^2 2^k$. 
We then need to look more in details at the oscillations and the quintic phase \eqref{tyu31}.
More precisely, we write
\begin{equation}\label{tyu30}
\begin{split}
\int_0^t&\mathcal{S}^{\ell}(U_{\iota_1}(s),U_{\iota_3}(s),T_{V_1\cdot\zeta}W^0(s),W^0(s))\,ds
  =\sum_{\iota_4\in\{+,-\}}C\iota_4\int_0^t\sum_{\xi,\eta,\rho,\theta\in\Z^2}
  \frac{\gamma_{\ell}(\xi,\eta,\rho)}{\Psi_{\iota_1\iota_3}(\xi,\eta,\rho)}
  \\
  &\times\chi\Big(\frac{|\rho-\theta|}{|\rho+\theta|}\Big)
   \frac{(\rho-\theta)\cdot(\rho+\theta)}{|\rho-\theta|^{1/2}}
  \what{U_{\iota_1}}(\xi-\eta,s)\what{U_{\iota_3}}(\eta-\rho,s)\what{U_{\iota_4}}(\rho-\theta,s)
  \what{W^0}(\theta,s)\overline{\widehat{W^0}}(\xi,s)\,ds,
\end{split}
\end{equation}
and define the associated five-wave modulation functions
\begin{equation}\label{tyu31}
\begin{split}
\Psi'_{\iota_1\iota_3\iota_4}(\xi,\eta,\rho,\theta)&:=\Lambda(\xi)-\Lambda(\theta)-\iota_1\Lambda(\xi-\eta)-\iota_3\Lambda(\eta-\rho)-\iota_4\Lambda(\rho-\theta)\\
&=\Psi_{\iota_1\iota_3}(\xi,\eta,\rho)+\Phi_{\iota_4+}(\rho,\theta).
\end{split}
\end{equation}
Using these functions we further decompose the expression in the right-hand of \eqref{tyu30} 
into low and high modulations. For $\ell\leq 0$ and $\ast\in\{\leq 6,>7\}$ we define
\begin{equation*}
\begin{split}
& \nu_{\ell,\ast}(\xi,\eta,\rho,\theta):=\frac{\gamma_{\ell}(\xi,\eta,\rho)}{\Psi_{\iota_1\iota_3}(\xi,\eta,\rho)}
  \chi\Big(\frac{|\rho-\theta|}{|\rho+\theta|}\Big)
  \frac{(\rho-\theta)\cdot(\rho+\theta)}{|\rho-\theta|^{1/2}}\varphi_{\ast}
  (\Psi'_{\iota_1\iota_3\iota_4}(\xi,\eta,\rho,\theta)),
\\
& \mathcal{Q}^{\ell,\ast}(F_1,F_2,F_3,H,H')
  :=\sum_{\xi,\eta,\rho,\theta\in\Z^2} \nu_{\ell,\ast}(\xi,\eta,\rho,\theta) 
  \widehat{F_1}(\xi-\eta)\widehat{F_2}(\eta-\rho)
  \widehat{F_3}(\rho-\theta)\widehat{H}(\theta)\overline{\widehat{H'}}(\xi).
\end{split}
\end{equation*}
Notice that, if $\ell\leq 0$,
\begin{equation}\label{tyu31.5}
\big| \nu_{\ell,\leq 6}(\xi,\eta,\rho,\theta) \big| \lesssim \varphi_{k_1}(\xi-\eta)\varphi_{k_3}(\eta-\rho)
	\langle\rho-\theta\rangle^4\varphi_k(\xi)\varphi_{k_2}(\eta)
	\cdot 2^{-\ell}2^{3k/2}|k|^{3/2}2^{12k_1}2^{5k_3},
\end{equation}
see \eqref{tyu20} and \eqref{Factor} (in view of \eqref{tyu31}, 
$|\Phi_{\iota_4+}(\rho,\theta)|\lesssim 1$ in the support of the multiplier). Thus
\begin{equation*}
\sum_{\ell\in[B,0]}\Big|\int_0^t\mathcal{Q}^{\ell,\leq 6}(U_{\iota_1}(s),U_{\iota_3}(s),U_{\iota_4}(s),W^0(s),W^0(s))\,ds\Big|\lesssim T_\varep\oe^52^{-B}2^{3k/2}|k|^{3/2}2^{-6k_1}2^{-6k_3},
\end{equation*}
using a multilinear estimate similar to \eqref{Fact2} and \eqref{Fact3}. This is consistent with the desired bounds \eqref{tyu23} since $2^{-B}2^{3k/2}\lesssim 2^{k/2}\oe^{-1}(2^{2k_1}+2^{2k_3})$ (see \eqref{defcut} and \eqref{tyu3}). 

In view of \eqref{tyu30}, for \eqref{tyu23} it remains to prove that
\begin{equation}\label{tyu35}
\sum_{\ell\in[B,0]}\Big|\int_0^t\mathcal{Q}^{\ell,>7}(U_{\iota_1}(s),U_{\iota_3}(s),U_{\iota_4}(s),W^0(s),W^0(s))\,ds\Big|\lesssim (\oe^2\KK_g^{-1})[\log(1/\oe)]^{-1}2^{-k_1}2^{-k_3}.
\end{equation}
for any $\iota_1,\iota_3,\iota_4\in\{+,-\}$ and any $k,k_1,k_2,k_3$ satisfying \eqref{tyu8}. We integrate by parts for one last time. This leads to an identity similar to \eqref{tyu14}--\eqref{tyu14.5}. Since the quintic modulation is $\gtrsim 1$, integration by parts gains a factor $\approx \oe 2^k$. Also, the multipliers $\nu_{\ell,>7}$ satisfy bounds similar to \eqref{tyu31.5}, with an additional loss of a factor of $2^{k/2}$. Therefore, the left-hand side of \eqref{tyu35} is bounded by $T_\varep\oe^62^{-B}2^{3k}|k|^{3/2}2^{-6k_1}2^{-6k_3}$, which suffices to prove \eqref{tyu35}. This completes the proof of the main bounds \eqref{gar61a}.

\medskip
{\bf{Proof of \eqref{gar61b}.}} 
This is similar to the proof of \eqref{gar61a} (in fact slightly easier because we do not need to use \eqref{Factor} in the first two steps), so we will only provide an outline of the proof. 
First, we may replace $h$ with $(g+|\nabla|^2)^{-1/2}\Re U$, at the expense of acceptable errors (see \eqref{ExpV0}), 
and decompose dyadically the resulting symbol. More precisely, we define 
\begin{equation}\label{tyu40}
\begin{split}
\Sigma'_{1,l} :&= \frac{1}{4}\frac{\Lambda(\zeta)}{|\zeta|} \Big[ \Delta (g+|\nabla|^2)^{-1/2}
	P_l\Re U - \frac{\zeta_i\zeta_j}{|\zeta|^2}\partial_{i}\partial_j(g+|\nabla|^2)^{-1/2}P_l\Re U\Big]
	\\
& - \frac{1}{2} \frac{|\zeta|}{\Lambda(\zeta)} \Lambda^2 (g+|\nabla|^2)^{-1/2}P_l\Re U,
\end{split}
\end{equation} 
and it suffices to prove that, for any $\iota_1\in\{+,-\}$ and $k,k_1,k_2,k_3\in[-4,\infty)$ satisfying \eqref{tyu8},
\begin{align}
\label{tyu48}
\Big| \Re \int_0^t \mathcal{T}^{\leq B}_{\mu} (P_{k_1} U_{\iota_1}(s), P_{k_2}T_{\Sigma'_{1,k_3}}W^0(s), 
	P_k W^0(s)) \,ds\Big| \lesssim \oe^2\KK_g^{-1} [\log(1/\oe)]^{-1}2^{-k_1}2^{-k_3},
\end{align}
where $B$ is defined as in \eqref{defcut}. Compare with \eqref{gar58}. 

Next, we expand the expression in the left-hand side of \eqref{tyu48} as
\begin{align}
\label{tyu49}
\begin{split}
&  \frac{1}{8\pi^2}\sum_{\iota_3\in\{+,-\}}\Re\int_0^t \sum_{\xi,\eta,\rho\in\Z^2} \mu(\xi,\eta)
	\frac{\varphi_{\leq B}(\Phi_{\iota_1+}(\xi,\eta))}{i\Phi_{\iota_1+}(\xi,\eta)}
  	\varphi_{k_1}(\xi-\eta)\widehat{U_{\iota_1}}(\xi-\eta,s) \varphi_k(\xi)\overline{\widehat{W^0}}(\xi,s)
\\
& \qquad \times\varphi_{k_2}(\eta)\chi\Big(\frac{|\eta-\rho|}{|\eta+\rho|}\Big)s(\eta-\rho,(\eta+\rho)/2)
    \varphi_{k_3}(\eta-\rho)\widehat{U_{\iota_3}}(\eta-\rho,s)\widehat{W^0}(\rho,s)\,ds,
\end{split}
\end{align}
where
\begin{equation}\label{tyu50}
s(\theta,\zeta) := -\frac{1}{4}\frac{\Lambda(\zeta)}{|\zeta|}\frac{|\theta|^2}{\sqrt{g+|\theta|^2}} 
	+ \frac{1}{4}\frac{\Lambda(\zeta)}{|\zeta|}\frac{(\zeta\cdot\theta)^2}{|\zeta|^2\sqrt{g+|\theta|^2}}
	- \frac{1}{2} \frac{|\zeta|}{\Lambda(\zeta)} \frac{\Lambda(\theta)^2}{\sqrt{g+|\theta|^2}}.
\end{equation}
Since $\mu$ and $s$ are real-valued, we make the key observation that the contribution of the trivial resonance $\xi=\rho$, 
$\iota_1=-\iota_3$ vanishes. 
Therefore we can use Proposition \ref{prop2}, insert a dyadic decomposition based on the size of the quartic modulation, 
and reduce matters to proving that 
\begin{equation}\label{tyu51}
\begin{split}
\sum_{\ell\geq B+4}\Big|\int_0^t \sum_{\xi,\eta,\rho\in\Z^2} \widetilde{\gamma}_\ell(\xi,\eta,\rho) 
	\widehat{U_{\iota_1}}(\xi-\eta,s)\widehat{U_{\iota_3}}(\eta-\rho,s)
	\widehat{W^0}(\rho,s)\overline{\widehat{W^0}}(\xi,s)\,ds\Big|
\\
\lesssim  (\oe^2\KK_g^{-1})[\log(1/\oe)]^{-1}2^{-k_1}2^{-k_3},
\end{split}
\end{equation}
for any $\iota_1,\iota_3\in\{+,-\}$ and any $k,k_1,k_2,k_3$ satisfying \eqref{tyu8} (compare with \eqref{tyu12}). Here 
\begin{equation}\label{tyu52}
\begin{split}
\widetilde{\gamma}_\ell(\xi,\eta,\rho) & := \varphi_{k_1}(\xi-\eta)\varphi_{k_3}(\eta-\rho)\varphi_k(\xi)\varphi_{k_2}(\eta)
	\mu(\xi,\eta)\frac{\varphi_{\leq B}(\Phi_{\iota_1+}(\xi,\eta))}{\Phi_{\iota_1+}(\xi,\eta)}
	\\
& \times\chi\Big(\frac{|\eta-\rho|}{|\eta+\rho|}\Big)s(\eta-\rho,(\eta+\rho)/2)
	\cdot\varphi_{\ell}(\Psi_{\iota_1\iota_3}(\xi,\eta,\rho)).
\end{split}
\end{equation}
Finally, we examine the definition \eqref{tyu50} and notice that the multipliers $\widetilde{\gamma}_\ell$ satisfy the bounds
\begin{equation*}
\begin{split}
\Big|\frac{\widetilde{\gamma}_{\ell}(\xi,\eta,\rho)}{\Psi_{\iota_1\iota_3}(\xi,\eta,\rho)}\Big|&\lesssim \varphi_{k_1}(\xi-\eta)\varphi_{k_3}(\eta-\rho)\varphi_k(\xi)\varphi_{k_2}(\eta)\varphi_\ell(\Psi_{\iota_1\iota_3}(\xi,\eta,\rho))\cdot 2^{-\ell}2^k|k|^{3/2}2^{12k_1}2^{4k_3}.
\end{split}
\end{equation*}
These are slightly stronger than the bounds \eqref{tyu20}. Thus the argument in Steps 3 and 4 in the proof of \eqref{gar61a} can be applied, essentially with no changes, to prove the desired bounds \eqref{tyu51}. This completes the proof of the lemma.
\end{proof}

\medskip

\end{document}